\theoremstyle{theorem}
\newtheorem{theorem}{Theorem}[section]
\newtheorem{lemma}{Lemma}[section]
\newtheorem{proposition}{Proposition}[section]
\theoremstyle{definition}
\newtheorem{definition}{Definition}[section]
\newtheorem{assumption}{Assumption}[section]
\theoremstyle{remark}
\newtheorem{remark}{Remark}[section]
\newtheorem{example}{Example}[section]
\newcommand{\optim}[2]{\underset{#1}{\mathrm{#2}}}%
\newcommand{\R}{\mathbb{R}}
\newcommand{\K}{\mathsf{K}}
\newcommand{\U}{\mathsf{U}}
\newcommand{\V}{\mathsf{V}}
\newcommand{\X}{\mathsf{X}}
\newcommand{\bu}{\mathsf{u}}
\newcommand{\bv}{\mathsf{v}}
\newcommand{\bw}{\mathsf{w}}
\newcommand{\col}[1]{\mathrm{col}(#1)}     
\newcommand{\row}[1]{\mathrm{row}(#1)}
\DeclareMathOperator*{\argmin}{arg\,min}
\newcommand{\tb}[1]{\textcolor{blue}{#1}}
\def\BibTeX{{\rm B\kern-.05em{\sc i\kern-.025em b}\kern-.08em
		T\kern-.1667em\lower.7ex\hbox{E}\kern-.125emX}}
\begin{document}
	\title{Feedback Stackelberg-Nash equilibria in difference games with quasi-hierarchical interactions and inequality constraints}
  \author{Partha Sarathi Mohapatra, Puduru Viswanadha Reddy and Georges Zaccour
		\thanks{P. S. Mohapatra and P. V. Reddy are with the Department of Electrical Engineering, Indian Institute of Technology-Madras, Chennai, 600036, India.  Georges Zaccour is with GERAD and HEC Montr\'eal, Montr\'eal, Canada.
		{(e-mail:   ps\textunderscore mohapatra@outlook.com, vishwa@ee.iitm.ac.in, georges.zaccour@gerad.ca)}}} 
	\date{\today }
	\maketitle
	
	\begin{abstract}
In this paper, we study a class of two-player deterministic finite-horizon difference games with coupled inequality constraints, where each player has two types of decision variables: one involving sequential interactions and the other simultaneous interactions. We refer to this class of games as quasi-hierarchical dynamic games and define a solution concept called the feedback Stackelberg-Nash (FSN) equilibrium.  		
		Under separability assumption on cost functions, we  provide a recursive formulation of the FSN solution  using dynamic programming. We show that the FSN solution  can be derived from the parametric feedback Stackelberg solution of an associated unconstrained game involving only sequential interactions, with a specific choice of the parameters that satisfy certain  implicit complementarity conditions.  For the linear-quadratic case, we show that an FSN solution is obtained by reformulating these complementarity conditions as a single large-scale linear complementarity problem. Finally, we illustrate our results using a dynamic duopoly game with production constraints.  		
	\end{abstract}
\begin{IEEEkeywords}
         Difference games; feedback Stackelberg-Nash equilibrium; coupled inequality constraints; complementarity problem.
      \end{IEEEkeywords}
	\section{Introduction}
	\label{sec:introduction}

Dynamic game theory (DGT) provides a mathematical framework for modeling multi-agent decision processes that evolve over time. Unlike static games, where players act once, dynamic games involve sequential or simultaneous decisions over a fixed or endogenously determined horizon. This temporal structure introduces strategic complexities, requiring players to anticipate and respond to future decisions. The two primary solution concepts in non-cooperative dynamic games are Nash and Stackelberg equilibria, distinguished by the information available to players. In a Nash game \cite{Nash:51}, information is imperfect; that is, each player makes a strategic choice without knowing the others’ decisions. In a Stackelberg game \cite{Stackelberg:52}, information is perfect; that is, each player knows the opponent's last move when choosing their decision. Put differently, while play is simultaneous in a Nash game, it is sequential or hierarchical in a two-player Stackelberg game, where one player acts first (the leader) and the other (the follower) best responds. Anticipating the follower’s reaction, the leader selects a strategy to optimize her performance index. DGT has been successfully applied in engineering, management science, and economics, where dynamic multi-agent decision problems naturally arise (see \cite{isaacs:65, Basar:99, Engwerda:05, Dockner:00, Haurie:12, Basar:08} and the handbook \cite{Basar:18}). In particular, Nash equilibrium has been used in communication networks \cite{Zazo:16}, collision avoidance \cite{Mylvaganam:17}, and formation control \cite{Gu:07}, while Stackelberg equilibrium has been applied to demand response in smart grids \cite{Maharjan:13}, supply chains \cite{He:07, Chutani:12, Jorgensen:01}, and traffic routing \cite{Groot:17}. 

Two key features frequently appear in dynamic game models, both in discrete and continuous time. First, most models neglect constraints linking control (strategies, decisions) and state variables. For instance, existence and uniqueness theorems for Nash and Stackelberg equilibria are typically stated without considering coupled decision sets or mixed control-state constraints; see, e.g., \cite{Basar:99, Dockner:00, Engwerda:05, Haurie:12}. However, real-world multi-agent decision problems involve constraints such as saturation limits, bandwidth restrictions, production capacities, budget constraints, and emission limits. Incorporating these factors into a dynamic game introduces equality and inequality constraints that couple players' decision sets at each stage. Prior work has examined Nash equilibria under such constraints. \cite{Reddy:15, Reddy:17} analyzed open-loop and feedback Nash equilibria for linear-quadratic (LQ) games with private-type constraints. More recently, \cite{Partha:23b} characterized open-loop Nash equilibria under coupled constraints, while \cite{Partha:23a} studied open-loop Nash equilibria in mean-field-type games with deterministic coupled constraints. To the best of our knowledge, hierarchical counterparts to these setups remain unexplored.

Second, most two-player games involve either simultaneous or sequential play, with few exceptions \cite{Breton:06, Basar:10, Bensoussan:13, Bensoussan:19, Huang:24, Xie:21}. In \cite{Breton:06}, players alternate as leader and follower when setting advertising budgets, influencing market shares and profits. In mixed-leadership games \cite{Basar:10, Bensoussan:13, Bensoussan:19, Huang:24, Xie:21}, each player controls two variables: they first make simultaneous decisions on one set, then react, again simultaneously, by selecting the other. This results in two Nash games played sequentially. Mixed-leadership games have been applied to cooperative advertising \cite{Bensoussan:19} and innovation and pricing decisions \cite{Huang:24}.

In this paper, we study a class of two-player finite-horizon dynamic games where, at any period \( k \), each player \( i \) has two types of decision variables, \( u_{k}^{i} \) and \( v_{k}^{i} \). Except in the final period, the decision process is decomposed into three stages: (i) the leader announces \( u_{k}^{1} \); (ii) the follower responds with \( u_{k}^{2} \); and (iii) both simultaneously choose \( v_{k}^{1} \) and \( v_{k}^{2} \). At the terminal period \( K \), interaction occurs only through simultaneous choices of \( v_{K}^{1} \) and \( v_{K}^{2} \). This structure defines what we term \emph{quasi-hierarchical} dynamic games, which differ from mixed-leadership games by combining sequential and simultaneous interactions rather than two simultaneous ones. We illustrate this framework with two generic examples. 

As a first example, we consider a supply chain with a manufacturer and a retailer, where the decision process in each period is decomposed into three stages. First, the manufacturer (leader) sets the wholesale price. Next, the retailer (follower) responds by choosing the consumer price. Finally, both simultaneously invest in demand-enhancing activities, such as national brand advertising by the manufacturer and local advertising by the retailer. Notably, advertising expenditures are typically constrained by the available budget. 

In the previous example, the players engage in a \emph{vertical} strategic interaction, where the manufacturer sells a product to the retailer, who then sells it to consumers. As a second example, we consider a foreign firm and a local company competing in the same market with two partially substitutable products, where demand for each depends on both competitors' prices. Here, the strategic interaction is \emph{horizontal} .\footnote{The meaning and impact of cooperation differ in these two interactions. While coordination in a vertical channel is socially desirable, leading to higher profits and consumer surplus, coordination between competing firms amounts to collusion, harming consumer surplus and total welfare.}  
As before, the decision process in each period is decomposed into three stages. In the first two, firms sequentially adjust production capacities by adding or decommissioning equipment. In the third, they compete on price. The foreign firm first announces its global investment strategy, including in the market of interest, allowing the local firm to observe its capacity adjustments before making its own. Thus, investment decisions are sequential, with the foreign firm as leader and the local firm as follower, while pricing decisions are made simultaneously. Here, production quantity must be non-negative and constrained by capacity.  

Inspired by these examples, our dual objective is to characterize equilibrium strategies for two-player quasi-hierarchical dynamic games with mixed coupled inequality constraints and to provide a method for computing them.   
\subsection{Related literature} 
Nash equilibrium was first introduced in dynamic games in \cite{Starra:69, Starrb:69}.  Similarly, \cite{Simman:73} extended the Stackelberg solution to multi-period settings using a control-theoretic framework. In \cite{Simman3:73}, the authors introduced the feedback Stackelberg solution, where the leader enforces stage-wise policies rather than a global one.   When the leader has dynamic information and announces a policy for the entire game \cite{Bensoussan:15}, computing the Stackelberg solution, though well-defined, becomes challenging due to infinite-dimensional reaction sets in players' optimization problems. Indirect methods using an incentive approach were proposed in \cite{Basar:79, Basar:80} to derive such global Stackelberg solutions in discrete and continuous time.  In multi-player Stackelberg games, players are grouped into leaders and followers. \cite{Simman2:73} analyzed interactions where players within each group played Nash among themselves. The single-leader, multi-follower case has been studied in LQ settings under disturbances \cite{Kebriaei:017} and in a mean-field framework \cite{Moon:18}. \cite{Li:20, Lin:23} proposed algorithms for computing solutions in this setting with nonlinear dynamics.  
In \cite{Basar:10}, the authors introduced mixed-leadership games under an open-loop information structure, and  specifically, for LQ differential games. The existence of a unique open-loop Stackelberg equilibrium for two-player LQ differential games with mixed leadership was examined in \cite{Bensoussan:13}. Stochastic mixed-leadership games under a feedback information structure were analyzed in \cite{Bensoussan:19}, assuming the diffusion term was independent of players’ controls. In \cite{Huang:24}, a mixed-leadership game with state and control dependent diffusion was studied under a feedback information structure.  

In the DGT framework, simultaneous interactions have been studied with dynamic mixed state-control constraints. The existence of constrained open-loop and feedback Nash equilibria for a specific class of LQ difference games with affine inequality constraints was analyzed in \cite{Reddy:15, Reddy:17}. These works considered two types of control variables: one influencing state evolution and another affecting only the constraints, ensuring the first type remained uncoupled. More recently, \cite{Partha:23b, Partha:23a} characterized open-loop Nash equilibria in games with coupled constraints and in mean-field-type LQ games, respectively.  In \cite{Mondal:19}, necessary and sufficient conditions for an open-loop Stackelberg solution in LQ difference games with constraints were studied, but only for cases where the leader had linear state-control inequality constraints. Global Stackelberg solutions for stochastic games under adapted open-loop and closed-loop memoryless information structures with convex control constraints were studied in \cite{Zhang:21}. In \cite{Xie:21}, mixed-leadership games with input constraints were studied, restricting constraints to leader-controlled variables while leaving follower variables unrestricted.
\subsection{Contributions} 
 \tb{The contribution of our paper to the dynamic games literature is threefold. First, we introduce the new class of quasi-hierarchical dynamic games, which we believe has practical relevance, as illustrated by the supply chain and duopoly examples. A key distinction from its closest class, mixed-leadership games \cite{Basar:10, Bensoussan:13, Bensoussan:19, Huang:24, Xie:21}, lies in the information structure and player interactions. Instead of two successive simultaneous interactions, we have one sequential interaction followed by a simultaneous one. In short, we propose a different model of strategic interactions, leading to a different solution.} 

\tb{Second, we define an equilibrium concept for this class, the feedback Stackelberg-Nash (FSN) equilibrium, and provide sufficient conditions for its existence.  
Despite the similar name, this solution concept is fundamentally different from the Stackelberg-Nash solution in multi-player Stackelberg games \cite{Simman2:73, Kebriaei:017, Moon:18, Li:20, Lin:23}. In the latter, players have a single type of decision variable, with fixed interactions—simultaneous within each group and sequential between leaders and followers. In contrast, quasi-hierarchical dynamic games involve two types of decision variables, with leaders and followers interacting sequentially in one and simultaneously in the other.}  

 \tb{Finally, we provide a computational approach for the FSN equilibrium. We show that a solution to the original constrained difference game can be obtained from a parametric feedback Stackelberg solution of an associated unconstrained parametric game with only sequential interactions, using a specific choice of parameters that satisfy implicit complementarity conditions. Further, we show that the FSN equilibrium of a linear-quadratic game can be obtained by reformulating these complementarity conditions as a single large-scale linear complementarity problem.}

 The rest of the paper is organized as follows. Section \ref{sec:preliminaries} introduces finite-horizon nonzero-sum difference games with coupled inequality constraints and quasi-hierarchical interactions. Section \ref{sec:CFS} defines the feedback Stackelberg-Nash (FSN) solution. Section \ref{sec:FSNValue} provides a sufficient condition for its existence. Section \ref{sec:FSNfrompFS} shows how the FSN solution can be derived from the parametric feedback Stackelberg solution of an associated unconstrained parametric game, with a specific parameter choice satisfying implicit complementarity conditions. Section \ref{sec:CLQDG} specializes these results to a linear-quadratic setting with affine inequality constraints. Section \ref{sec:Numerical} illustrates our results through a dynamic duopoly game between a foreign and a local firm. Finally, Section \ref{sec:Conclusions} presents our conclusions.
\vskip1ex       
\noindent 
	\textbf{Notation:}    
    We denote real numbers by $\mathbb{R}$, non-negative real numbers by $\mathbb{R}_{+}$, the $n$-dimensional Euclidean space by $\mathbb{R}^n$, and $n \times m$ real matrices by $\mathbb{R}^{n \times m}$. For any $A \in \mathbb{R}^{n \times m}$, its transpose is $A^\prime \in \mathbb{R}^{m \times n}$. The identity and zero matrices are denoted by $\mathbf{I}$ and $\mathbf{0}$, with dimensions inferred from context. Let $A\in \mathbb R^{n\times n}$ and $a\in \mathbb R^n$, where
    the index set $\{1,2,\cdots,n\}$ is partitioned into $K$ blocks of sizes $n_1,n_2,\cdots, n_K$, such that
    $n=n_1+n_2+\cdots+n_K$. The $(i,j)$-th block submatrix of $A$, corresponding to rows in block $i$ and columns in block $j$, is denoted by $[A]_{ij} \in \mathbb R^{n_i\times n_j}$, and the $i$-th block of $a$ is denoted by $[a]_i\in \mathbb R^{n_i}$. The column vector $[v_1^\prime, \dots, v_n^\prime]^\prime$ is written as $\text{col}(v_1, \dots, v_n)$ or, more compactly, $\text{col}(v_k)_{k=1}^n$. The block diagonal matrix with diagonal elements $M_1, M_2, \dots, M_K$ is denoted by $\oplus_{k=1}^{K}M_k$.      
Vectors $x, y \in \mathbb{R}^n$ are complementary if $x \geq 0$, $y \geq 0$, and $x^\prime y = 0$, denoted as $0 \leq x \perp y \geq 0$. The composition of functions $f: \mathbb{R}^l\rightarrow \mathbb{R}^m$ and $g:\mathbb{R}^n\rightarrow \mathbb{R}^l$ is   $(f \circ g):\mathbb{R}^n \rightarrow \mathbb{R}^m$.
\begin{table*}[h]
\caption{Summary of abbreviations and variables}
 \centering 
\def\arraystretch{1.15} 
\begin{tabular}{ l l l l } 
\hline
Type & Symbol & Definition &Equations \\
\hline
\multirow{5}{5em}{Abbreviations} &CNZDG &Two-player nonzero-sum finite-horizon difference games with inequality constraints &\eqref{eq:state}-\eqref{eq:objective}\\ 
&pNZDG &Unconstrained two-player parametric nonzero-sum difference game &\eqref{eq:pnzdg} \\ 
&FSN &Feedback Stackelberg-Nash solution &\eqref{eq:ConSE}, \eqref{eq:Nash3rdstage}-\eqref{eq:followerresponse1}\\ 
&pFS &Parametric feedback Stackelberg solution &\eqref{eq:pFS}, \eqref{eq:ParametricRR}-\eqref{eq:pfollower}, \eqref{eq:Parametricsolutions}\\
&pCP  / pLCP / LCP& Parametric  /   Parametric linear  /  Linear  complementarity problem & \eqref{eq:nLCP} / \eqref{eq:LQpLCP} / \eqref{eq:LQMmap}\\
&&&\\
\multirow{3}{5em}{Controls} & $u_k^i$ / $v_k^i$ & Sequential / simultaneous type control of player $i$ at time $k\in\K_l$ &\eqref{eq:state},\eqref{eq:LQstate} / \eqref{eq:constraints},\eqref{eq:LQconstraints}\\ 
& $\U_k$ / $\V_k$ &  Joint admissible action set for sequential / simultaneous type controls at time $k\in\K_l$ & \eqref{eq:Uadmissible} / \eqref{eq:Vadmissible}\\  
& $\U_k^i$ / $\V_k^i$ &  Admissible action set for sequential / simultaneous type controls of player $i$ at time $k\in\K_l$ &\eqref{eq:admissibleuset} / \eqref{eq:admissiblevset}\\ 
&&&\\
\multirow{6}{5em}{Strategies} & $\gamma_k^i$ & State-feedback sequential type strategy of player $i$ at time $k\in\K_l$ &\eqref{eq:nash1}-\eqref{eq:leaderannouncement}\\  
& $\Gamma_k^i$ & Set of admissible sequential type state-feedback strategies of player $i$ at time $k\in\K_l$ &\eqref{eq:nash1}-\eqref{eq:leaderannouncement}\\  
&$\psi_{k}^{i}$ & Joint strategy of player $i$ in CNZDG  at time $k\in\K$ &\eqref{eq:ConSE}\\  
&$\Uppsi_{k}^{i}$ & Set of admissible joint strategy of player $i$ in CNZDG at time $k\in\K$ &\eqref{eq:ConSE}\\ 
& $\xi_k^i$ & State-feedback strategy of player $i$ in pNZDG at time $k\in\K_l$ &\eqref{eq:pFS}\\  
& $\Xi_k^i$ & Set of admissible state-feedback strategies of player $i$ in pNZDG at time $k\in\K_l$ &\eqref{eq:pFS}\\
&&&\\
\multirow{4}{5em}{Miscellaneous} & $W^i$ / $W_p^i$ & Value functions of player $i$ in CNZDG / pNZDG & \eqref{eq:valfuncon} / \eqref{eq:valfunpar}\\ 
& $\mathsf{R}^2_k$ / $\bar{\mathsf{R}}^2_k$ & Optimal response set of the follower at time $k\in\K_l$ in CNZDG / pNZDG &\eqref{eq:followerresponse} / \eqref{eq:rationalreactionsetp}\\  
& $\X_k$ / $\bar{\X}_k$ & Reachable set at time $k\in\K_r$ in CNZDG / pNZDG &\\  
& $\mathsf{w}_{k}, \uptheta_{k}$ & Parameters at time $k\in\K$ in pNZDG &\eqref{eq:pnzdg} \\
\hline
\end{tabular}

\end{table*}
    
	\section{Preliminaries}
	\label{sec:preliminaries}
	\subsection{Dynamic game with inequality constraints}        
        In this section, we introduce a class of two-person discrete-time nonzero-sum finite-horizon dynamic games with inequality constraints (CNZDG). Let $\{1,2\}$ denote the players and $\mathsf{K}=\{0,1,\dots,K\}$ the set of decision instants. We define $\mathsf{K}_{l}:=\mathsf{K} \setminus \{K\}$ and $\mathsf{K}_{r}:=\mathsf{K} \setminus \{0\}$.   
       For any $k \in \mathsf{K}_l$, the \emph{downstream} decision instants are $\{k+1, \dots, K\}$, and for any $k \in \mathsf{K}_r$, the \emph{upstream} decision instants are $\{0, \dots, k-1\}$. 
         At each  instant $k\in \mathsf{K}_{l}$, player $i\in \{1,2\}$ selects an action \(u_{k}^{i} \in \mathbb{R}^{m_{i}}\), influencing the state variable \(x_{k} \in \mathbb{R}^{n}\), which evolves according to:  
        \begin{align}
        	x_{k+1} = f_k(x_k, u_k^1, u_k^2),\quad k \in \K_l,\label{eq:state}
        \end{align}
        where $f_{k}:\mathbb{R}^{n}\times \mathbb{R}^{m}\rightarrow \mathbb{R}^{n}$, with $m:=m_{1}+m_{2}$ and the initial state $x_{0} \in \mathbb{R}^{n}$ is given.  
                At each $k\in \mathsf{K}$, both players also endowed with decision variables $v_{k}^{i} \in \mathbb{R}^{s_{i}}, i\in \{1,2\}$, which do not affect the state dynamics but influence their decisions through the following mixed inequality constraints        
	\begin{align}
		h_k^i(x_k, v_k^{1}, v_k^{2}) \geq 0, ~ v_k^i\geq 0,~k \in \K,~i \in  \{1,2\},\label{eq:constraints}
	\end{align}
        where $h_{k}^{i}:\mathbb{R}^{n}\times \mathbb{R}^{s}\rightarrow \mathbb{R}_{+}^{c_{i}}$ with $s:=s_1+s_2$. We denote the joint actions of players by $\mathsf{u}_{k}:=\mathrm{col}(u_{k}^{1},u_{k}^{2}),~k\in \mathsf{K}_{l}$ and $\mathsf{v}_{k}:=\mathrm{col}(v_{k}^{1},v_{k}^{2}),~k\in \mathsf{K}$. The   profile of actions of player $i\in \{1,2\}$ is denoted by $(\tilde{\mathsf{u}}^{i},\tilde{\mathsf{v}}^{i})$, where $\tilde{\mathsf{u}}^{i}:=\mathrm{col}(u_{k}^{i})_{k=0}^{K-1}$ and $\tilde{\mathsf{v}}^{i}:=\mathrm{col}(v_{k}^{i})_{k=0}^{K}$. 
       Player $i $ minimizes the following stage-additive cost functional:
\begin{align}
  J_i(x_0, (\tilde{\bu}^1, \tilde{\bv}^1), (\tilde{\bu}^2, \tilde{\bv}^2))=
      g_{K}^i(x_K, \bv_K)+\sum_{k=0}^{K-1}g_k^i\left(x_k, \bu_k, \bv_k\right),\label{eq:objective}
\end{align}
      where $g_{k}^{i}:\mathbb{R}^{n}\times \mathbb{R}^{m}\times \mathbb{R}^{s}\rightarrow \mathbb{R}$, $k\in \mathsf{K}_{l}$ and $g_{K}^{i}:\mathbb{R}^{n}\times \mathbb{R}^{s}\rightarrow \mathbb{R}$ denote the instantaneous and terminal cost functions of player $i,$ respectively.
 We assume that the dynamics, constraints, and cost functions are common knowledge among the players.
Next, we define the admissible action spaces for both types of decision variables. The constraints \eqref{eq:constraints} are coupled, meaning that at each time instant $ k \in \mathsf{K}$, player 1's control action $ v_{k}^{1} $ depends on player 2's control action $v_{k}^{2}$ and vice versa. Given $x_k \in \mathbb{R}^n $, the joint admissible action set  $\mathsf{V}_k(x_k)$ at  $ k \in \mathsf{K} $ is defined as  
\begin{align} 
	\mathsf{V}_k(x_k) := \{(v_k^{1}, v_k^{2}) \in \mathbb{R}^{s} ~|~ \eqref{eq:constraints} \text{ holds}\}.\label{eq:Vadmissible}
\end{align} 
Similarly, for $ x_k \in \mathbb{R}^n $, the joint admissible action set $ \mathsf{U}_k(x_k) $ at $ k \in \mathsf{K}_l $ is given by  
\begin{align}
	\mathsf{U}_k(x_k) := \{(u_k^{1}, u_k^{2}) \in \mathbb{R}^{m} ~|~ \mathsf{V}_{k+1}(f_k(x_k, u_k^1, u_k^2)) \neq \emptyset \}.\label{eq:Uadmissible}
\end{align}
For $ k \in \mathsf{K}_r $, we define the reachable set $ \mathsf{X}_k \subset \mathbb{R}^n $ as the set of all state variables at time instant $ k \in \mathsf{K}_r $ that can be reached when players $ i \in \{1,2\} $ choose any sequence of admissible actions $ (u_{\tau}^{1}, u_{\tau}^{2}) \in \mathsf{U}_{\tau}(x_{\tau}) $ for $ \tau = 0, \dots, k-1 $.
Next, we define the admissible action sets for player $ i \in \{1,2\} $ with decision variables $ u_k^i \in \mathbb{R}^{m_i} $ and $ v_k^i \in \mathbb{R}^{s_i} $, given $ x_k \in \mathsf{X}_k $ and the other player’s action $ (u_k^j, v_k^j) $, where $ j \neq i $:  
\begin{subequations}
\begin{align}
	\U^i_k(x_k,u_k^j) &:= \{ u_k^i \in \mathbb{R}^{m_i} \mid (u_k^1, u_k^2) \in \U_k(x_k) \}, ~ k \in \K_l,
	\label{eq:admissibleuset} \\
	\V^i_k(x_k,v_k^j) &:= \{ v_k^i \in \mathbb{R}^{s_i} \mid (v_k^1, v_k^2) \in \V_k(x_k) \}, ~ k \in \K.
	\label{eq:admissiblevset}
\end{align}  
\label{eq:admissibleiset}%
\end{subequations}
For notational brevity, we drop the arguments in the players' admissible action sets \eqref{eq:admissibleiset} for the remainder of the paper. 

\begin{remark}\label{rem:AdmissibleUV}
The dynamic game in \eqref{eq:state}-\eqref{eq:objective} was first studied in \cite{Reddy:15, Reddy:17, Reddy:19} within a linear-quadratic framework. In this class of games, the decision variables $\mathsf{u}_{k}$ at time $k$ influence the state variable $x_{k+1}$ at time $k+1$, while the decision variables $\mathsf{v}_{k+1}$ at time $k+1$ are constrained by $x_{k+1}$ through \eqref{eq:constraints}. Thus, although $\mathsf{u}_k$ has no explicit constraints, the feasibility of \eqref{eq:constraints} at $k+1$ imposes  coupling   \eqref{eq:Uadmissible} through the state equation \eqref{eq:state}. 
\end{remark}
        We make the following assumptions related to the dynamic game \eqref{eq:state}-\eqref{eq:objective}:
	\begin{assumption}\label{ass:A1}
		\begin{enumerate}[label = (\roman*)]
			\item \label{itm:1} The admissible action sets $\U_k(x_k) \subset \R^{m}$ for $k \in \K_l$ are such that the   action sets $\V_k(x_k)$ for all $k \in \K$ are nonempty, convex, closed and bounded.
		    \item \label{itm:3} The functions $\{f_k,\, k\in \K_l,\, h_k^i,\, g_k^i,\, k \in \K,\, i \in \{1,2\}\}$ are continuously differentiable in their arguments.	
                \item \label{itm:2} The matrices $\big\{\frac{\partial h_k^i}{\partial v_k^i},\, k \in \K,\, i \in \{1,2\}\big\}$ are full rank for each  $\bv_k\in\V_k(x_k)$ and $x_k\in\X_k$.
		\end{enumerate}
	\end{assumption}
        \tb{ 
From Remark \ref{rem:AdmissibleUV}, Assumption \ref{ass:A1}.\ref{itm:1} is necessary to ensure the feasibility of the joint action sets $\V(x_k)$. Additionally, Assumption \ref{ass:A1}.\ref{itm:1} requires $\V(x_k)$, for all \( k \in \K \), to be convex, closed, and bounded to guarantee the existence of a solution (see Lemma \ref{lem:nLCP}). Assumption \ref{ass:A1}.\ref{itm:2} ensures that constraint qualification conditions hold; see also \cite{Bueno:19} for alternative formulations. While verifying Assumption \ref{ass:A1} is generally challenging, it is straightforward for the LQ case in Section \ref{sec:CLQDG} (see Assumption \ref{ass:CLQDG} and Remark \ref{rem:LQVerification}).  
}
	\subsection{Quasi-hierarchical interaction and information structure}
	\begin{figure}[h]
		\centering
\includegraphics[scale=0.75]{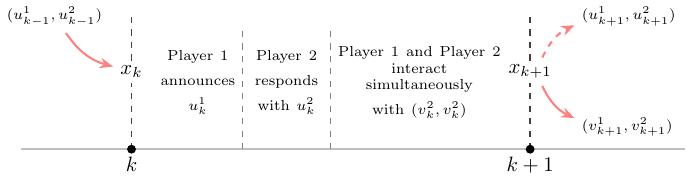}
		\caption{Three step decision process involving sequential and simultaneous interactions between the time instants $k$  and $k+1$.}
		\label{fig:threestages}
	\end{figure} 
   In this paper, we analyze a quasi-hierarchical interaction among players, as defined below:
\begin{definition}[Quasi-hierarchical interaction]\label{Def:A1_1} 
The leader and follower are denoted by the labels $1$ and $2,$ respectively. Between any two time instants $k$ and $k+1$, with $k\in \mathsf{K}_{l}$, the players interact in three stages. In the first stage, the leader announces her action $u_{k}^{1}$ and in the second stage, the follower responds by announcing her action $u_{k}^{2}$. In the third stage, the players choose simultaneously their decisions $v_{k}^{1}$ and $v_{k}^{2}$. At the terminal instant $K$, the players choose simultaneously the decision variables $v_{K}^{1}$ and $v_{K}^{2}$.
\end{definition}

The decision process between time instants \( k \) and \( k+1 \) is illustrated in Figure \ref{fig:threestages}. In the sequential interaction, the leader first announces her action \( u_k^1 \in \mathsf{U}_k^1 \), followed by the follower's response \( u_k^2 \in \mathsf{U}_k^2 \), which determines the next state \( x_{k+1} \). The state \( x_k \) influences the joint action set \( \mathsf{V}_k(x_k) \), from which the players simultaneously choose actions \( (v_k^1, v_k^2) \) in the third stage. Before the next time instant \( k+1 \), each player \( i \in \{1,2\} \) incurs a cost \( g_k^i(x_k, \mathsf{u}_k, \mathsf{v}_k) \). At the terminal time \( K \), both players interact simultaneously, and player \( i \) incurs a cost \( g_K^i(x_K, \mathsf{v}_K) \).
 
In the presence of constraints, \cite{Reddy:15,Reddy:17} introduce a constrained information structure in both open-loop and feedback forms, and demonstrate that this information structure leads to a semi-analytic characterization of Nash equilibrium strategies. Motivated by this aspect, in this paper, we assume a constrained feedback information structure, which is described as follows; see also \cite[Section III.A]{Reddy:17}.

\begin{definition}[Constrained feedback information structure]\label{def:CFSI} 
         The control action $(u_k^i,v_k^i)$ of player $i$ ($i\in \{1,2\}$) at stage $k$ is given by $u_k^i:=\gamma_k^i(x_k)\in \mathsf{U}_k^i$ and $v_k^i\in \mathsf{V}_k^i$, where $\gamma_k^i:\X_k\rightarrow \mathsf{U}_k^i$ is a measurable mapping and the set of all such mappings is denoted by $\Gamma_k^i$.
\end{definition}
	 
\section{Feedback Stackelberg-Nash solution }\label{sec:CFS}
       In this section, we present the feedback Stackelberg-Nash (FSN) solution for the dynamic game \eqref{eq:state}-\eqref{eq:objective} with quasi-hierarchical interaction. To this end, we introduce additional notation. Let $\psi _{k}^{i}:=(\gamma_{k}^{i},v_{k}^{i})\in \Uppsi_{k}^{i}$ denote the mapping associated with the joint action $(u_{k}^{i},v_{k}^{i}),\,\,i\in \{1,2\}$, where we represent the joint action space as $\Uppsi_{k}^{i}$, i.e., $\forall k\in  \mathsf{K}_{l},\Uppsi_{k}^{i}:=\Gamma _{k}^{i}\times \V_{k}^{i}$ and $\Uppsi_{K}^{i}:=\V_{K}^{i}$. Consequently, we have $(u_{k}^{i},v_{k}^{i})=(\gamma_{k}^{i}(x_{k}),v_{k}^{i})=\psi _{k}^{i}$ for $k\in \mathsf{K}_{l}$ and $\psi _{K}^{i}:=v_{K}^{i}$ at the terminal time. We denote the strategy of player $i$ by $\psi ^{i}:=((\gamma _{k}^{i}(x_{k}),v_{k}^{i})_{k\in \mathsf{K}_{l}},v_{K}^{i})$. The strategy space for player $i$ is denoted by $\Uppsi^{i}:=\prod_{k=0}^{K}\Uppsi_{k}^{i}$. For any $a,b\in \mathsf{K}$, with $0\leq a\leq b\leq K$, we denote the collection of actions for time periods from $a$ to $b$ (including both $a$ and $b$) by $\psi _{\lbrack a,b]}^{i}:=(\psi_{a}^{i},\cdots ,\psi _{b}^{i})$ and the corresponding admissible sets by $\Uppsi_{[a,b]}^{i}:=\prod_{k=a}^{b}\Uppsi_{k}^{i}$. We define the FSN solution as follows:


\begin{definition}[Feedback Stackelberg-Nash solution] \label{def:CFS} 
         A pair $(\psi ^{1\star },\,\psi ^{2\star })\in (\Uppsi^{1},\Uppsi^{2})$ constitutes a \emph{feedback Stackelberg-Nash solution} for CNZDG if the following conditions are satisfied: 
\begin{enumerate}
         \item For all $\psi _{\lbrack 0,K-1]}^{i}\in \Uppsi_{[0,K-1]}^{i}$,~$i\in \{1,2\}$, $(\psi _{K}^{1\star },\psi _{K}^{2\star })\in \mathsf{V}_{K}$ satisfy 
			\begin{subequations}\label{eq:ConSE}
				\begin{align}
					&\psi^{1\star}_{K}\in \argmin_{\psi^1_K \in \V_K^1(v_K^{2\star})}~ J_1\bigl(x_0, (\psi^1_{[0,K-1]},\psi^{1}_{K}), (\psi^{2}_{[0,K-1]}, \psi^{2\star}_{K})\bigr),\label{eq:Nash1K}\\
					&\psi^{2\star}_{K}\in \argmin_{\psi^2_K \in \V_K^2(v_K^{1\star})}~ J_2\bigl(x_0, (\psi^1_{[0,K-1]},\psi^{1\star}_{K}), (\psi^{2}_{[0,K-1]}, \psi^{2}_{K})\bigr).\label{eq:Nash2K}
				\end{align}
           \item Recursively for $k=K-1,\cdots ,0$ and for all $\psi _{[0,k-1]}^{i}\in \Uppsi_{[0,k-1]}^{i},~i\in \{1,2\}$, in the decision process between time instants $k$ and $k+1$,
	    \begin{enumerate}
            \item For all $\gamma _{k}^{i}\in \Gamma_{k}^{i}, i\in\{1,2\}$, $(v_{k}^{1\star},v_{k}^{2\star})\in \mathsf{V}_{k}$ satisfy in the third stage the following conditions:
				\begin{align} 
					&v_k^{1\star}\in  \argmin_{v_k^1 \in \V_k^1(v_k^{2\star})}~ J_1\bigl(x_0, \bigl(\psi^1_{[0,k-1]}, (\gamma^{1 }_k,v_k^{1 }), \psi^{1\star}_{[k+1,K]}\bigr),\notag\\
                    &\hspace{1.0in}\bigl(\psi^2_{[0,k-1]}, ( \gamma^{2}_k ,v_k^{2\star}), \psi^{2\star}_{[k+1,K]}\bigr)\bigr),
					\label{eq:nash1}\\
					&v_k^{2\star}\in  \argmin_{v_k^2 \in \V_k^2(v_k^{1\star})}~J_2\bigl(x_0, \bigl(\psi^1_{[0,k-1]}, (\gamma^{1 }_k,v_k^{1\star }), \psi^{1\star}_{[k+1,K]}\bigr),\notag\\ 
                    &\hspace{1.0in}\bigl(\psi^2_{[0,k-1]}, ( \gamma^{2}_k ,v_k^{2}), \psi^{2\star}_{[k+1,K]}\bigr)\bigr).
					\label{eq:nash2}
				\end{align} 
            \item For every $\gamma _{k}^{1}\in \Gamma _{k}^{1}$, there exists a unique map $\mathsf{R}_{k}^{2}:\Gamma _{k}^{1}\rightarrow \Gamma_{k}^{2}$ such that the following condition is satisfied in the second stage: 			
				\begin{align} 
				&\mathsf R_k^2(\gamma^{1}_k)=\argmin_{\gamma_k^2 \in \Gamma_k^2}~J_2\bigl(x_0, \bigl(\psi^1_{[0,k-1]}, (\gamma^{1 }_k,v_k^{1\star }), \psi^{1\star}_{[k+1,K]}\bigr),\notag\\ 
                &\hspace{1.0in}\bigl(\psi^2_{[0,k-1]}, ( \gamma^{2}_k ,v_k^{2\star}), \psi^{2\star}_{[k+1,K]}\bigr)\bigr).
				\label{eq:followerresponse}
			\end{align} 
            \item In the first stage, $\gamma _{k}^{1\star }\in \Gamma _{k}^{1}$ satisfies the condition 
				\begin{align} 
			&\gamma^{1 \star}_k\in \argmin_{\gamma_k^1 \in \Gamma_k^1}~ J_1\bigl(x_0, \bigl(\psi^1_{[0,k-1]}, (\gamma^{1 }_k,v_k^{1\star }), \psi^{1\star}_{[k+1,K]}\bigr),\notag\\ 
            &\hspace{0.75in}\bigl(\psi^2_{[0,k-1]}, ( \mathsf R_k^2(\gamma^{1}_k) ,v_k^{2\star}), \psi^{2\star}_{[k+1,K]}\bigr)\bigr),
			\label{eq:leaderannouncement}
		\end{align} 
            with $\gamma_k^{2\star}=\mathsf R_k^2(\gamma_k^{1\star})$.
				\end{enumerate}				 
			\end{subequations}%
		\end{enumerate}
 The feedback Stackelberg-Nash equilibrium costs incurred by the leader and the follower are given by $J_{1}(x_{0},\psi ^{1\star},\psi ^{2\star})$ and $J_{2}(x_{0},\psi ^{1\star},\psi ^{2\star})$, respectively.
\end{definition}
In Definition \ref{def:CFS}, conditions \eqref{eq:ConSE} characterize the FSN solution via dynamic programming, decomposing its computation into solving $ K+1 $ static games backward in time. At the terminal time $ K $, the only decision variables are $ (v_{K}^{1}, v_{K}^{2}) $, in which players interact simultaneously, yielding the Nash equilibrium $ (\psi _{K}^{1\star },\psi_{K}^{2\star }) $.
 More specifically, the conditions \eqref{eq:Nash1K}-\eqref{eq:Nash2K} imply that whatever admissible strategies used by the players to reach time $K$, that is, for any arbitrary choice of admissible strategies $\psi _{\lbrack 0,K-1]}^{i}\in \Uppsi_{[0,K-1]}^{i}$, the decisions specified by FSN solution $(\psi _{K}^{1\star },\psi_{K}^{2\star})$ satisfy conditions \eqref{eq:Nash1K}-\eqref{eq:Nash2K} at $K$. For $ k \in \mathsf{K}_{l} $, players interact sequentially via $ (u_{k}^{1}, u_{k}^{2}) $ and simultaneously via $ (v_{k}^{1}, v_{k}^{2}) $. The FSN solutions are computed for any arbitrary choice of admissible upstream strategies $ (\psi_{\lbrack 0,k-1]}^{1}, \psi_{\lbrack 0,k-1]}^{2}) $ by fixing downstream decisions at $ (\psi _{\lbrack k+1,K]}^{1\star}, \psi _{[k+1,K]}^{2\star}) $. Between time $ k $ and $ k+1 $, the interaction follows the three-stage process in Figure \ref{fig:threestages}.  Using dynamic programming, conditions \eqref{eq:nash1}-\eqref{eq:nash2} imply that stage three is simultaneous, with Nash outcome $ (v_{k}^{1\star }, v_{k}^{2\star }) $. At stage two, fixing these decisions, \eqref{eq:followerresponse} implies that the follower best responds as $ \mathsf{R}_{k}^{2}(\gamma _{k}^{1}) $ for any leader announcement $ \gamma _{k}^{1} \in \Gamma _{k}^{1} $. At stage one, \eqref{eq:leaderannouncement} implies the leader minimizes her cost, accounting for the follower’s best response, to obtain her FSN Stackelberg strategy $ \gamma _{k}^{1\star} \in \Gamma _{k}^{1} $.   
\begin{remark}
If the decision variables \((v_{k}^{1}, v_{k}^{2})\), the constraints \eqref{eq:constraints}, and the corresponding objective terms \eqref{eq:objective} are absent, the FSN solution reduces to the canonical feedback-Stackelberg solution; see \cite[Definition 3.29]{Basar:99}.
\end{remark}
\begin{remark}
In Definition \ref{def:CFS}, the follower's rational reaction set $\mathsf{R}_{k}^{2}(\gamma _{k}^{1})$ is needed to be a singleton for every leader's announcement $\gamma _{k}^{1}\in \Gamma _{k}^{1}$ at all time $k\in \mathsf{K}_{l}$. In the subgame starting at $k$, if $\mathsf{R}_{k}^{2}(\gamma _{k}^{1})$ is not a singleton, then the leader's cost depends on the strategy used by the follower from $\mathsf{R}_{k}^{2}(\gamma_{k}^{1})$. The leader can secure her cost against multiple follower's optimal responses from $\mathsf{R}_{k}^{2}(\gamma _{k}^{1})$ by considering the follower's response that gives her the worst cost. When $\mathsf{R}_{k}^{2}(\gamma _{k}^{1})$ is not a singleton, the leader cannot enforce her strategy on the follower, which creates an important difficulty in determining a Stackelberg equilibrium. This explains why the literature typically assumes that $\mathsf{R}_{k}^{2}(\gamma _{k}^{1})$ is a singleton \cite[Theorem 7.1 and Theorem 7.2]{Basar:99}.
\end{remark}
\begin{remark}\label{rem:multiNash}
In Definition \ref{def:CFS}, the recursive formulation of the FSN solution results in a static game at each time step \( k \), where downstream decisions are fixed at FSN solutions. If, at any \( k \), multiple solutions \( (v_{k}^{1\star}, v_{k}^{2\star}) \) exist, the upstream static games must be solved for each one of these solutions to obtain the full set of FSN solutions.
\end{remark}
%
 \section{Recursive formulation of feedback Stackelberg-Nash solution}\label{sec:ValueCFS}
In this section, we present a recursive formulation of the FSN solution. To this end, we assume the following structure for the players' cost functions:  

\begin{assumption}\label{ass:seperability}
	For all \( k \in \mathsf{K}_{l} \) and \( i \in \{1,2\} \), the cost functions are separable:  
	\begin{align}\label{eq:costseparability}  
		g_k^i(x_k, \bu_k, \bv_k) = g_{u, k}^i(x_k, \bu_k) + g_{v, k}^i(x_k, \bv_k).  
	\end{align}  
\end{assumption}  
This assumption rules out cross terms between sequential and simultaneous decision variables in the players' objectives. \tb{The following example illustrates the complications arising from such cross terms:}  

\begin{example}
         \tb{Consider a scalar two-period game with state dynamics given by $x_{1}=x_{0}-(u_{0}^{1}+u_{0}^{2})$ and the initial state $x_{0}\in \mathbb{R}$. The leader minimizes the cost function $J_{1}=4x_{1}x_{1}+2u_{0}^{1}u_{0}^{1}+u_{0}^{2}u_{0}^{2}+v_{0}^{1}v_{0}^{1}+v_{0}^{1}v_{0}^{2}$ subject to the constraints $x_{0}-v_{0}^{1}-2v_{0}^2\geq 0$, $v_{0}^{1}\geq 0$. Similarly, the follower minimizes the cost $J_{2}=x_{1}x_{1}+u_{0}^{2}u_{0}^{2}+u_{0}^{2}v_{0}^{2}+v_{0}^{2}v_{0}^{2}+v_{0}^{1}v_{0}^{2}$ subject to the constraints $x_{0}-v_{0}^{1}-2v_{0}^2\geq 0$ and $v_{0}^{2}\geq 0$.          
         For this example, Assumption \ref{ass:A1} holds. The joint action space \( \V_0(x_0) \) is nonempty for any \( x_0 > 0 \), convex, closed (as it is defined by affine inequalities), and bounded. Additionally, the full rank condition is satisfied since  $
         \frac{\partial h_0^1}{\partial v_0^1} = -1$ and $ \frac{\partial h_0^2}{\partial v_0^2} = -2$.}
         
         \tb{Since players have decision variables only for time instant \(0\), we consider a three-stage decision process for this period. Given any admissible sequential actions \(u_0^1\) and \(u_0^2\) for the leader and follower,   the Nash (simultaneous) solution at time   \(0\) is determined by the following optimization problems:
 	\begin{align*}
 		\min_{v_0^1\geq0}~&\big\{4x_{1}x_{1}+2u_{0}^{1}u_{0}^{1}+u_{0}^{2}u_{0}^{2}+v_{0}^{1}v_{0}^{1}+v_0^1v_0^{2\star}\big\}\\&\quad\textrm{subject to}~ x_{0}-v_{0}^{1}-2v_{0}^{2\star}\geq 0,\\
 		\min_{v_0^2\geq0}~~&\big\{x_{1}x_{1}+u_{0}^{2}u_{0}^{2}+u_{0}^{2}v_{0}^{2}+v_{0}^{2}v_{0}^{2}+v_0^{1\star}v_0^2\big\}\\&\quad\textrm{subject to}~ x_{0}-v_{0}^{1\star}-2v_{0}^2\geq 0.
 	\end{align*}
        Using the relation $x_{1}=x_{0}-(u_{0}^{1}+u_{0}^{2})$, the associated Lagrangians can be written as $4(x_{0}-(u_{0}^{1}+u_{0}^{2}))^{2}+2u_{0}^{1}u_{0}^{1}+u_{0}^{2}u_{0}^{2}+v_{0}^{1}v_{0}^{1}+v_{0}^{1}v_{0}^{2\star }-\mu _{0}^{1}(x_{0}-v_{0}^{1}-2v_{0}^{2\star})$ and $(x_{0}-(u_{0}^{1}+u_{0}^{2}))^{2}+u_{0}^{2}u_{0}^{2}+u_{0}^{2}v_{0}^{2}+v_{0}^{2}v_{0}^{2}+v_{0}^{1\star }v_{0}^{2}-\mu _{0}^{2}(x_{0}-v_{0}^{1\star}-2v_{0}^2)$. Here, $\mu _{0}^{1}$ and $\mu _{0}^{1}$ are the Lagrange multipliers associated with the coupled inequality constraints $x_{0}-v_{0}^{1}-2v_{0}^{2\star}\geq 0$ and $x_{0}-v_{0}^{1\star}-2v_{0}^2$, respectively. The associated KKT conditions are 
        \begin{equation}
\begin{rcases}
        		0\leq 2v_{0}^{1\star }+v_{0}^{2\star}+\mu _{0}^{1\star }\perp v_{0}^{1\star}\geq 0, \\
        		0\leq u_{0}^{2}+2v_{0}^{2\star }+v_{0}^{1\star}+2\mu_{0}^{2\star }\perp v_{0}^{2\star }\geq 0, \\
        		0\leq x_{0}-v_{0}^{1\star}-2v_{0}^{2\star}\perp \mu _{0}^{1\star }\geq 0,\\
                0\leq x_{0}-v_{0}^{1\star}-2v_{0}^{2\star}\perp \mu_{0}^{2\star }\geq 0.        		
        	\end{rcases} \label{eq:FollowerCon}
        \end{equation}
        Next, at stage two, the optimal response of the follower for any leader's admissible action $u_{0}^{1}$, upon fixing the third stage decisions at $(v_{0}^{1\star },v_{0}^{2\star })$, is obtained by solving the optimization problem \eqref{eq:followerresponse}, which after substituting for $x_{1}=x_{0}-(u_{0}^{1}+u_{0}^{2})$ is given by 
 	\begin{align*}
 		\optim{u_0^2}{min}~&\big\{(x_0-(u_0^1+u_0^2))^2+u_{0}^{2}u_{0}^{2}+u_0^2v_0^{2\star}+v_0^{2\star}v_0^{2\star}+v_0^{1\star}v_0^{2\star}\big\}\\&\quad\textrm{subject to}~~\eqref{eq:FollowerCon}.
 	\end{align*}
       The presence of the cross term $u_{0}^{2}v_{0}^{2}$ in the follower's objective results in a  mathematical programming problem with complementarity constraint (MPCC). In general, MPCC are extremely hard to solve because of the non-convexity and non-smoothness of the feasible set. Also, the lack of constraint qualification at every feasible point makes them intractable; see \cite{Holger:00, Ye:97}.}   
        \qed
\end{example}%
        Next, using Definition \ref{def:CFS}, we develop a recursive formulation of the FSN solution. To this end, we use the reachable set \(\mathsf{X}_{k} \subset \mathbb{R}^{n}\), the set of all state variables at time instant \(k\) that can be reached when players employ admissible strategies in $\Uppsi_{[0,k-1]}^{i},~ i\in\{1,2\}$. Let the FSN strategies of the players be denoted by \(\{\psi^{i\star} \equiv (\{(\gamma _{k}^{i\star}(x_{k}),v_{k}^{i\star })\}_{k\in \mathsf{K}_{l}},v_{K}^{i\star }),~i\in \{1,2\}\}\).  
        Recall that the FSN solution at \(k\) results in a static game at the same time instant, where downstream decisions from \(k+1\) to \(K\) are fixed at FSN strategies. Let \(G_{k}^{i}(x_{k},\psi _{k}^{1},\psi_{k}^{2})\) denote the cost incurred by player \(i\) in this game when players use $\psi _{k}^{i}\in \Uppsi_{k}^{i},~i\in \{1,2\}$  at time instant $k\in\K_l$. Then, we have  
 \begin{align}
 	G_k^i(x_k,\psi_k^1,\psi_k^2)&:=
 	g_{k}^i(x_k,(\gamma_k^1(x_k), \gamma_k^2(x_k)), (v_k^1,v_k^2))  \notag\\
 	& +\sum_{\tau=k+1}^{K-1} g_{\tau}^i(x_\tau,(\gamma_\tau^{1\star}(x_\tau), \gamma_\tau^{2\star}(x_\tau)), (v_\tau^{1\star}, v_\tau^{2\star}))\notag\\
 	& +g_{K}^i(x_K, (v_K^{1\star},v_K^{2\star})),\label{eq:playericost}
 \end{align}
        where $x_{k+1}=f_{k}(x_{k},\gamma _{k}^{1}(x_{k}),\gamma _{k}^{2}(x_{k}))$ and $x_{\tau +1}=f_{\tau }(x_{\tau },\gamma _{\tau }^{1\star }(x_{\tau}),\gamma_{\tau }^{2\star }(x_{\tau }))$ for $\tau =k+1,\cdots ,K-1$. Following the state-additive structure of the cost functions, ${G}_{k}^{i}$ can be written recursively backwards for $k=K-1,\cdots ,0$ as
 \begin{subequations}\label{eq:Grecur}
 	\begin{align}
 		&G_k^i(x_k,\psi_k^1,\psi_k^2)=
 		g_{k}^i(x_k,(\gamma_k^1(x_k), \gamma_k^2(x_k)), (v_k^1,v_k^2)) \notag\\
 	&\quad+G_{k+1}^i(f_k(x_k, \gamma_k^1(x_k), \gamma_k^{2}(x_k)) ,\psi_{k+1}^{1\star},\psi_{k+1}^{2\star}),\label{eq:Grecursive}\\
 	&G_K^i(x_k,\psi_K^{1\star},\psi_K^{2\star})= g_{K}^i(x_K, v^{1\star}_{K}, v^{2\star}_{K}).\label{eq:Grecursiveterm}
 	\end{align}
        Using the separability Assumption \ref{ass:seperability}, we introduce the auxiliary cost of player $i$ as
 \begin{align}
 	&\tilde{G}_{u, k}^i(x_k, \gamma_k^1(x_{k}),\gamma_k^2(x_{k})):=g_{u, k}^i(x_k, \gamma_k^1(x_{k}),\gamma_k^2(x_{k}))\notag\\
 	&\quad+G_{k+1}^i(f_k(x_k, \gamma_k^1(x_k), \gamma_k^2(x_k)), \psi^{1\star}_{k+1}, \psi^{2\star}_{k+1}).\label{eq:Gtilde}
 \end{align}
 \end{subequations} 
        Then, using \eqref{eq:Grecur}, player $i$'s cost \eqref{eq:playericost} is given by 
 \begin{align}
 	G_k^i(x_{k}, \psi^{1}_{k}, \psi^{2}_{k})
 	& = \tilde{G}_{u, k}^i(x_k, \gamma_k^1(x_{k}),\gamma_k^2(x_{k}))+ g_{v, k}^i(x_k, (v_k^1,v_k^2)).\label{eq:Gseparability}
 \end{align}
\subsection{Simultaneous interaction}
        The third-stage decision problem at any time instant $k\in \mathsf{K}_{l}$ given by \eqref{eq:nash1}-\eqref{eq:nash2} translates as
\begin{subequations} 
\begin{align} 
&v_k^{1\star} \in \argmin_{v_k^1\in \V_k^1(v_k^{2\star})}~ G_k^{1}(x_k,(\gamma_k^1(x_k),v_k^1),(\gamma_k^2(x_k),v_k^{2\star})), \label{eq:kstage31}\\
&v_k^{2\star} \in \argmin_{v_k^2\in \V_k^2(v_k^{1\star})}~G_k^{2}(x_k,(\gamma_k^1(x_k),v_k^{1\star}),(\gamma_k^2(x_k),v_k^{2})). \label{eq:kstage32}
\end{align} 
\end{subequations}
Clearly, from \eqref{eq:Gseparability}, the above third-stage problem results in a static parametric (in $x_{k}$) nonzero-sum game denoted by $\bm{\Uplambda}_{k}(x_{k}):=\bigl<\{1,2\},\mathsf{V}_{k}(x_{k}), \{g_{v,k}^{i}(x_{k},v_{k}^{1},v_{k}^{2})\}_{i\in \{1,2\}}\bigr>$, where each player $i\in \{1,2\}$ solves
\begin{align}
	&\min _{v_k^i\in \mathbb R^{s_i}} g_{v,k}^i(x_k,(v_k^i,v_k^{j\star})), \notag \\
	&\text{subject to}~ h_k^i(x_k,(v_k^i, v_k^{j\star}))\geq 0,~v_k^i\geq 0, \label{eq:subgame} 
\end{align}
where $j\in\{1,2\}$, $j\neq i$.
The KKT conditions for the third-stage interaction at time $k$ can be expressed as the following parametric (in $ x_k$) complementarity problem, denoted by $\mathrm{pCP}_{k}(x_{k})$:  
\begin{align} 
	&\mathrm{pCP}_k(x_k ):\quad  0 \leq\begin{bmatrix}
		\nabla{L}_k(x_k,  \bv_k^{\star},{\upmu}_k^{\star})\\
		h_k(x_k, \bv_k^{\star})
	\end{bmatrix} \perp\begin{bmatrix}
		\bv^{\star}_k\\
		{\upmu}_k^{\star}
	\end{bmatrix} \geq 0,\label{eq:nLCP} 
\end{align}
where  
\begin{align*}
	\nabla{L}_k(x_k,  \bv_k^{\star},{\upmu}_k^{\star})&= \col{\nabla_{v_k^i} g^i_{v,k}(x_k, \bv_k^{\star})-{\mu_k^{i\star}}^{\prime}\nabla_{v_k^i} h_k^i(x_k, \bv_k^{\star})}_{i=1}^{2},\\
	h_k(x_k, \bv_k^{\star})&=\col{h_k^i(x_k, \bv_k^{\star})}_{i=1}^{2},~ {\upmu}_k^{\star}=\col{\mu_k^{i\star}}_{i=1}^{2},
\end{align*}  
and $\nabla_{v_k^i} g^i_{v,k}(x_k, \bv_k^{\star})$ is the gradient of $g^i_{v,k}(x_k, \bv_k^{\star})$ with respect to $v_k^i$.
Here, $
L_k^i(x_k,v_k^i,\mu_k^i):=g_{v,k}^i(x_k,(v_k^i,v_k^{j\star}))-{\mu_k^i}^\prime  h_k^i(x_k,(v_k^i, v_k^{j\star}))$ is the Lagrangian associated with \eqref{eq:subgame} and 
  $\mu _{k}^{i }$ is the     multiplier for the constraint $ h_{k}^{i}(x_{k},(v_{k}^{i},v_{k}^{j\star }))\geq 0$ for $i,j\in\{1,2\}$, $i\neq j$.   

We denote the solution of \eqref{eq:nLCP}, if it exists, as $(\mathsf{v}^\star_k, \boldsymbol{\upmu}_k^\star) := \texttt{SOL}(\mathrm{pCP}_k(x_k))$. \tb{It is well known that a static game  can admit multiple Nash equilibria. Consequently, the upstream decision problems must be solved for each equilibrium; see Remark \ref{rem:multiNash}. This can be avoided if $\mathrm{pCP}_k(x_k)$ has a unique equilibrium at every time step $k \in \mathsf{K}$. To ensure this, we impose the following assumption on the cost functions $g_{v,k}^i$ in the third-stage game $\bm{\Uplambda}_k(x_k)$, based on \cite[Theorem 2]{Rosen:65}.}  

\begin{assumption}\label{ass:convex} 
      For every $(k,x_k)\in \mathsf{K }\times \mathsf{X}_k$ the cost functions $g^i_{v,k}(x_k,.):\mathbb{R}^s\rightarrow \mathbb{R}$,~$i\in \{1,2\}$ are \emph{diagonally strictly convex} (DSC). That is, for every $\boldsymbol{\alpha},\boldsymbol{\beta} \in \mathbb{R}^{s}$ we have 
	\begin{align} 
		(\boldsymbol{\alpha}-\boldsymbol{\beta})^\prime \nabla g_{v,k}(x_k,\boldsymbol{\beta}) + (\boldsymbol{\beta}-\boldsymbol{\alpha})^\prime\nabla g_{v,k}(x_k,\boldsymbol{\alpha})<0,
	\end{align}
where $\nabla g_{v,k}(x_{k},\mathsf{v}_{k}) = \col{\nabla_{v_k^i} g^i_{v,k}(x_k, \bv_k)}_{i=1}^{2}$. 
\begin{remark} 
	A sufficient condition for the cost functions to satisfy the DSC property, as given in \cite{Rosen:65}, is that the symmetric matrix $[G(x_{k},\mathsf{v}_{k}) + G^{\prime }(x_{k}, \mathsf{v}_{k})]$ be positive definite for all $\mathsf{v}_{k} \in \mathbb{R}^{s}$, where $G(x_{k},\mathsf{v}_{k})$ is the Jacobian of $ \nabla g_{v,k}(x_{k},\mathsf{v}_{k})$ with respect to $\mathsf{v}_{k}$.	
\end{remark}
\end{assumption}


\begin{lemma}\label{lem:nLCP}
      Consider the CNZDG described by \eqref{eq:state}-\eqref{eq:objective}. Let Assumptions \ref{ass:A1}, \ref{ass:seperability} and \ref{ass:convex} hold. Then, for every $(k,x_{k})\in \mathsf{K}\times \mathsf{X}_{k}$, $\texttt{SOL}(\mathrm{pCP}_{k}(x_{k}))$ is unique (a singleton), and the FSN control actions $\mathsf{v}_{k}^{\star }$ are obtained as $(\mathsf{v}_{k}^{\star },{\upmu}_{k}^{\star })=\texttt{SOL}(\mathrm{pCP}_{k}(x_{k}))$
\end{lemma}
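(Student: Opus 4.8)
The plan is to recognize the third-stage game $\bm{\Uplambda}_k(x_k)$ as a concave (equivalently, convex-cost) coupled-constraint Nash game in the sense of \cite{Rosen:65}, and to split the argument into three parts: (a) existence of an equilibrium, (b) equivalence between equilibria of $\bm{\Uplambda}_k(x_k)$ and solutions of $\mathrm{pCP}_k(x_k)$ in \eqref{eq:nLCP}, and (c) uniqueness of that solution, comprising both the strategy profile $\mathsf{v}_k^\star$ and the multiplier $\boldsymbol{\upmu}_k^\star$. Throughout, $x_k\in\X_k$ is held fixed, so every object is parametric in $x_k$.

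For existence, I would note that by Assumption \ref{ass:A1}.\ref{itm:1} the joint action set $\V_k(x_k)$ is nonempty, convex, closed and bounded, by Assumption \ref{ass:A1}.\ref{itm:3} the maps $g_{v,k}^i$ and $h_k^i$ are continuously differentiable, and by Assumption \ref{ass:convex} the pseudo-gradient $\nabla g_{v,k}(x_k,\cdot)$ is strictly monotone; rewriting the DSC inequality as $(\boldsymbol{\alpha}-\boldsymbol{\beta})^\prime(\nabla g_{v,k}(x_k,\boldsymbol{\alpha})-\nabla g_{v,k}(x_k,\boldsymbol{\beta}))>0$ shows in particular that each $g_{v,k}^i(x_k,\cdot)$ is strictly convex in $v_k^i$. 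These are exactly the hypotheses of Rosen's existence result \cite[Theorem 1]{Rosen:65}, which yields at least one equilibrium $\mathsf{v}_k^\star$ of $\bm{\Uplambda}_k(x_k)$.

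For the equivalence, I would invoke the full-rank condition in Assumption \ref{ass:A1}.\ref{itm:2}, which supplies a constraint qualification at every feasible point, so the first-order necessary conditions for each player's convex program \eqref{eq:subgame} hold at $\mathsf{v}_k^\star$. Concatenating the two players' stationarity, primal-feasibility and complementary-slackness conditions reproduces precisely the parametric complementarity system $\mathrm{pCP}_k(x_k)$, with multiplier $\boldsymbol{\upmu}_k^\star$. Conversely, since each player's problem is convex by Assumption \ref{ass:convex}, the KKT conditions are also sufficient, so every solution of $\mathrm{pCP}_k(x_k)$ is an equilibrium of $\bm{\Uplambda}_k(x_k)$; hence $\texttt{SOL}(\mathrm{pCP}_k(x_k))$ and the equilibrium set of $\bm{\Uplambda}_k(x_k)$ are in one-to-one correspondence. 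Finally, for uniqueness I would apply Rosen's uniqueness theorem \cite[Theorem 2]{Rosen:65}: the diagonal strict convexity forces the equilibrium profile $\mathsf{v}_k^\star$ to be unique. With $\mathsf{v}_k^\star$ fixed the active-constraint set is determined, and the full-rank condition of Assumption \ref{ass:A1}.\ref{itm:2} renders the active constraint gradients linearly independent, so the multiplier $\boldsymbol{\upmu}_k^\star$ solving the stationarity relations is uniquely pinned down; combining these gives that $\texttt{SOL}(\mathrm{pCP}_k(x_k))$ is a singleton equal to $(\mathsf{v}_k^\star,\boldsymbol{\upmu}_k^\star)$.

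I expect the main obstacle to be the uniqueness step. Rosen's Theorem 2 is classically stated for the common-constraint (normalized-equilibrium) setting, whereas here each player carries its own constraint $h_k^i$ coupling both decisions, so care is needed to verify that the DSC hypothesis still delivers uniqueness of the full profile rather than merely of a normalized equilibrium. A related delicate point is that Assumption \ref{ass:A1}.\ref{itm:2} must be made to play two distinct roles—constraint qualification for KKT necessity in part (b), and linear independence of active gradients for multiplier uniqueness in part (c)—and reconciling these two uses, especially when the nonnegativity constraints $v_k^i\geq 0$ are active alongside $h_k^i$, is the most technical aspect of the argument.
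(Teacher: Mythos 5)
Your proposal follows essentially the same route as the paper's proof: fix $x_k$, use Assumption \ref{ass:A1}.\ref{itm:1} (nonempty, convex, closed, bounded $\V_k(x_k)$) together with the separability Assumption \ref{ass:seperability} to reduce the third stage to the static parametric game $\bm{\Uplambda}_k(x_k)$, then invoke Rosen's existence theorem \cite[Theorem~1]{Rosen:65} and the DSC property (Assumption \ref{ass:convex}) with Rosen's uniqueness theorem \cite[Theorem~2]{Rosen:65}, at the terminal stage and at every $k\in\K_l$ alike. Where you differ is in making explicit two steps the paper's proof leaves implicit: the identification of Nash equilibria of $\bm{\Uplambda}_k(x_k)$ with solutions of $\mathrm{pCP}_k(x_k)$ (KKT necessity via the full-rank condition of Assumption \ref{ass:A1}.\ref{itm:2}, sufficiency via convexity of each player's problem), and the uniqueness of the multiplier $\boldsymbol{\upmu}_k^\star$ --- the paper only establishes uniqueness of the equilibrium profile $\mathsf{v}_k^\star$ and then asserts that the pair $(\mathsf{v}_k^\star,\boldsymbol{\upmu}_k^\star)$ equals $\texttt{SOL}(\mathrm{pCP}_k(x_k))$, so your part (c) addresses a point the printed proof never argues. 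The two obstacles you flag are genuine, but they afflict the paper's own argument equally: $\bm{\Uplambda}_k(x_k)$ is a generalized Nash game with player-specific coupled constraints $h_k^i$, whereas Rosen's Theorem~2 is stated for orthogonal (product) constraint sets, with DSC classically yielding only uniqueness of normalized equilibria in the coupled case; and multiplier uniqueness does require an LICQ-type argument reconciling the full rank of $\partial h_k^i/\partial v_k^i$ with possibly active nonnegativity constraints $v_k^i\geq 0$. In short, your proposal is the same approach, carried out more completely than the paper's proof.
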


\begin{proof} 
\tb{Under the DSC Assumption~\ref{ass:convex}, the individual cost functions
$
g^i_{v,K}(x_K,(\cdot,v_K^{j})):\mathbb{R}^{s_i}\rightarrow \mathbb{R}, \quad i,j\in\{1,2\},\ i\neq j,
$
are strictly convex for each $v_K^{j}\in\mathbb{R}^{s_j}$. By Assumption~\ref{ass:A1}.\ref{itm:1}, the constraint sets $\V_K(x_K)$ are nonempty, convex, closed, and bounded.  
Hence, the terminal-stage game $\bm{\Uplambda}_K(x_K)$ is convex, so a Nash equilibrium exists by \cite[Theorem~1]{Rosen:65}, and the DSC property ensures it is unique by \cite[Theorem~2]{Rosen:65}, given by
$
(\bv_K^{\star}, {\upmu}_K^{\star}) = \texttt{SOL}(\mathrm{pCP}_K(x_K)), \forall x_K \in \mathsf{X}_K.
$
For any $k \in \K_l$ and $x_k \in \mathsf{X}_k$, Assumption~\ref{ass:seperability} ensures that \eqref{eq:kstage31}–\eqref{eq:kstage32} define a static parametric game $\bm{\Uplambda}_k(x_k)$. The constraint sets $\V_k(x_k)$ are nonempty, convex, closed, and bounded (Assumption~\ref{ass:A1}.\ref{itm:1}) and the cost functions satisfy the DSC property (Assumption~\ref{ass:convex}).  
Following reasoning similar to the terminal stage, $\bm{\Uplambda}_k(x_k)$ is convex, a Nash equilibrium exists, and is unique. Hence, the FSN control actions $\mathsf{v}_k^{\star}$ obtained by solving the complementary problem \eqref{eq:nLCP} are unique for all $x_k \in \mathsf{X}_k$.}
\end{proof} 
\subsection{Sequential interaction}
       In the second stage, the follower solves the optimization problem \eqref{eq:followerresponse} for any leader's announcement $\gamma_{k}^{1}\in \Gamma _{k}^{1}$. Again, from Assumption \ref{ass:seperability} on separability of cost functions, the follower's optimal response is obtained as  
\begin{align}
	\mathsf R_k^2(\gamma_k^1)&=\argmin_{\gamma_k^2\in \Gamma_k^2}~G_k^2(x_k,(\gamma_k^1(x_k),v_k^{1\star}),(\gamma_k^2(x_k),v_k^{2\star}))\notag \\
	&=\argmin_{\gamma_k^2\in \Gamma_k^2}~\tilde{G}^2_{u,k}(x_k,\gamma_k^1(x_k),\gamma_k^2(x_k)).
\end{align}
       Next, in the first stage, the leader solves the optimization problem \eqref{eq:leaderannouncement} considering the follower's best response. That is, the leader's optimization problem is given by 
\begin{align}\label{eq:leadersproblem}
\gamma_k^{1\star}\in \argmin_{\gamma_k^1\in \Gamma_k^1}~G_k^1(x_k,(\gamma_k^1(x_k),v_k^{1\star}),((\mathsf R_k^2\circ \gamma_k^1) (x_k),v_k^{2\star})). 
\end{align} 
       Again, from Assumption \ref{ass:seperability} the leader's FSN solution $\gamma_{k}^{1\star }$ satisfies 
\begin{align}
	&\gamma_k^{1\star}\in \argmin_{\gamma_k^1\in \Gamma_k^1}~\tilde{G}_{u,k}^1(x_k,\gamma_k^{1}(x_k), (\mathsf R_k^2\circ \gamma_k^{1})(x_k)).\label{eq:Leadercond}
\end{align}
 
\begin{remark}
       In our model, Assumptions \ref{ass:seperability} (separability) and  \ref{ass:convex} (DSC property) enable to compute the third stage FSN decisions uniquely as $(\mathsf{v}_{k}^{\star},{\upmu}_{k}^{\star})=\texttt{SOL}(\mathrm{pCP}_{k}(x_{k}))$ at all time instants $k\in \mathsf{K}$. As a result, at time instant $k$, third stage decisions are decoupled from first and second stage decisions $(u_{k}^{1},u_{k}^{2})$. However, they are influenced by the sequential decisions $(u_{k-1}^{1},u_{k-1}^{2})$ taken at the upstream time instant $k-1$ indirectly through the state variable $x_{k}$.
\end{remark}


\begin{remark}
       In accordance with the definition of FSN solution, in \eqref{eq:leadersproblem}, it is assumed that the optimal response set of the follower is a singleton set. This condition is readily met when $\tilde{G}_{u,k}^2(x_k,\gamma_k^1(x_k),.)$ is a strictly convex function on $\mathsf{U}_k^2$; see also \cite[Theorem 7.1 and Theorem 7.2]{Basar:99}.
\end{remark}


\begin{remark}\label{rem:FSNodering}
       When the leader's optimization problem \eqref{eq:leadersproblem} at time $ k $ has multiple solutions $ \gamma _{k}^{1\star} $, she is indifferent among them as they yield the same stage-wise cost. However, the follower’s optimal response $ \gamma_{k}^{2\star} $ may vary, influencing the next-state $ x_{k+1} $ given by \eqref{eq:state}. Since $ x_{k+1} $ determines $ (v_{k+1}^{1\star},v_{k+1}^{2\star}) $, these multiplicities can result in different global FSN costs for the leader.  
       More precisely, if FSN solutions $ (\hat{\psi}^{1},\hat{\psi}^{2}) $ and $ (\bar{\psi}^{1},\bar{\psi}^{2}) $ satisfy $ J_{1}(x_{0},\hat{\psi}^{1},\hat{\psi}^{2}) < J_{1}(x_{0},\bar{\psi}^{1},\bar{\psi}^{2}) $, the leader prefers $ \hat{\psi}^{1} $ and enforces $ \hat{\gamma}_{k}^{1\star} $ at each instant. Given the uniqueness of simultaneous decisions (under Assumption \ref{ass:convex}) and the follower’s response, her cost depends entirely on $ \gamma_{k}^{1\star} $, establishing a total ordering of FSN solutions.  
       Thus, an FSN solution $ (\hat{\psi}^{1}, \hat{\psi}^{2}) $ is \emph{admissible} (see also \cite[Definition 3.30]{Basar:99}) if no other solution $ (\bar{\psi}^{1},\bar{\psi}^{2}) $ satisfies $ J_{1}(x_{0},\bar{\psi}^{1},\bar{\psi}^{2}) < J_{1}(x_{0},\hat{\psi}^{1},\hat{\psi}^{2}) $. Clearly, the leader can resolve  multiplicities in  \eqref{eq:leadersproblem} by enforcing admissible solutions at every time instant.  
\end{remark}
 
 \subsection{ Sufficient conditions for FSN solution}\label{sec:FSNValue} 
  In this subsection, we establish sufficient conditions for the existence of an FSN solution.  
  Building on  the existence and  uniqueness of the Nash equilibrium in simultaneous interactions (see Lemma \ref{lem:nLCP}), the next theorem  utilizes the  reachable sets $\mathsf{X}_{k}$, $k \in \mathsf{K}$  to transform 
  	conditions \eqref{eq:Nash1K}-\eqref{eq:leaderannouncement} in Definition \ref{def:CFS} into a recursive characterization of FSN solution.   

\begin{theorem}\label{th:VerificationTh1}
 Consider the CNZDG described by \eqref{eq:state}-\eqref{eq:objective}. Let Assumptions \ref{ass:A1}, \ref{ass:seperability} and \ref{ass:convex} hold true. If there exist functions $W^{i}(k,\,.\,):\mathsf{X}_{k}\rightarrow \mathbb{R}$, $k\in \mathsf{K}~i\in \{1,2\}$ such that for all $k\in \mathsf{K}_{l}$ and $\gamma _{k}^{1}\in \Gamma _{k}^{1},~g_{u,k}^{2}(x_{k},\gamma_{k}^{1}(x_{k}),\cdotp)+W^{2}(k+1,f_{k}(x_{k},\gamma _{k}^{1}(x_{k}),\cdotp)) $ is strictly convex function on $\mathsf{U}_{k}^{2}$ and for all $k\in \mathsf{K},~W^{i}(k,\cdotp)$ satisfy the following (backward) recursive relations:
 	\begin{enumerate}
 		\item At period $K$
 		\begin{subequations}\label{eq:valfuncon}
 		\begin{align} W^i(K,x_K)=g_{K}^i(x_K,(v_K^{1\star},v_K^{2\star})),~i\in\{1,2\},
 			\label{eq:termcongame}
 		\end{align} 
           where $(\mathsf{v}_{K}^{\star },{\upmu}_{K}^{\star })=\texttt{SOL}(\mathrm{pCP}(x_{K}))$.

           \item At periods $k=K-1,\cdots ,1,0$ 
 		\begin{align}
 			&W^i(k, x_k) =  g_k^i\big(x_k, \gamma_k^{1*}(x_k),   \gamma_k^{2*}  (x_k), (v_k^{1\star}, v_k^{2\star})\big)\notag\\
 			&~+W^i\bigl(k+1, f_k(x_k, \gamma_k^{1*}(x_k),  \gamma_k^{2*}  (x_k))\bigr),~i\in\{1,2\},   \label{eq:leadersproblemrecursive}
 		\end{align}
 	where
 	\begin{align} 
 		&(\mathsf v_k^\star,{\upmu}_k^{\star})=\texttt{SOL}(\mathrm{pCP}_k(x_k)), \label{eq:Nash3rdstage}\\
 					&\mathsf{R}^2_{k}(\gamma_{k}^1)= \underset{\gamma_{k}^{2} \in \Gamma_k^2}{\mathrm{arg min}}\,\Big\{g_{u, k}^2\bigl(x_k, \gamma_k^1(x_k), \gamma_k^2(x_k)\bigr)\notag\\
                  &\hspace{0.5in} +W^2\bigl(k+1, f_k(x_k, \gamma_k^1(x_k), \gamma_k^2(x_k))\bigr) \Big\} ,  \label{eq:RRinsubgameproblem} \\
 					&\gamma_k^{1\star} \in \underset{\gamma_{k}^{1} \in \Gamma_k^1}{\mathrm{arg min}}\,\Big\{g_{u, k}^1\bigl(x_k, \gamma_k^1(x_k), (\mathsf R_k^2 \circ \gamma_k^1)(x_k)\bigr)\notag \\
 				&\hspace{0.3in}+W^1\bigl(k+1, f_k(x_k, \gamma_k^1(x_k), (\mathsf R_k^2 \circ \gamma_k^1)(x_k))\bigr) \Big\}, \label{eq:leadersproblemfirst}\\
 					&\gamma_k^{2\star}=\mathsf R_k^2(\gamma_k^{1\star}).\label{eq:followerresponse1}
 		\end{align} 
 	\end{subequations}
 	\end{enumerate}
        Then, the pair of strategies $\{\psi ^{i\star }=((\gamma _{k}^{i\star}(x_{k}),v_{k}^{i\star })_{K_{l}},v_{K}^{i\star }),i=1,2\}$ constitutes an FSN solution for CNZDG. Further, every such FSN solution is strongly time consistent.
\end{theorem}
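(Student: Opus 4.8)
The plan is to prove the statement by backward induction on $k$, establishing simultaneously that (i) the strategies generated by the recursion \eqref{eq:Nash3rdstage}--\eqref{eq:followerresponse1} satisfy every optimality condition of Definition \ref{def:CFS}, and (ii) the functions $W^i(k,\cdot)$ coincide on all of $\mathsf{X}_k$ with the FSN cost-to-go $G_k^i$ defined in \eqref{eq:playericost}. The induction hypothesis at stage $k+1$ is precisely the identity $G_{k+1}^i(x_{k+1},\psi_{k+1}^{1\star},\psi_{k+1}^{2\star})=W^i(k+1,x_{k+1})$ for all $x_{k+1}\in\mathsf{X}_{k+1}$ and $i\in\{1,2\}$. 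For the base case $k=K$, the terminal interaction is the purely simultaneous game $\bm{\Uplambda}_K(x_K)$; Lemma \ref{lem:nLCP} guarantees that $(\mathsf{v}_K^\star,\upmu_K^\star)=\texttt{SOL}(\mathrm{pCP}_K(x_K))$ exists and is the unique Nash equilibrium, so conditions \eqref{eq:Nash1K}--\eqref{eq:Nash2K} hold and $W^i(K,x_K)=g_K^i(x_K,v_K^{1\star},v_K^{2\star})$ is by definition the terminal cost $G_K^i$.

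For the inductive step I would exploit the separability Assumption \ref{ass:seperability}, which through \eqref{eq:Gtilde}--\eqref{eq:Gseparability} splits player $i$'s stage-$k$ cost into a simultaneous part $g_{v,k}^i(x_k,v_k^1,v_k^2)$ and a sequential part $\tilde{G}_{u,k}^i(x_k,\gamma_k^1(x_k),\gamma_k^2(x_k))$; substituting the induction hypothesis into \eqref{eq:Gtilde} identifies the latter with $g_{u,k}^i(x_k,\gamma_k^1(x_k),\gamma_k^2(x_k))+W^i(k+1,f_k(x_k,\gamma_k^1(x_k),\gamma_k^2(x_k)))$, exactly the objectives appearing in \eqref{eq:RRinsubgameproblem}--\eqref{eq:leadersproblemfirst}. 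The decisive observation is that, since $x_k$ is already fixed when the stage-$k$ interaction takes place, the simultaneous part depends on $x_k$ but not on the stage-$k$ sequential actions $(u_k^1,u_k^2)$, so the third-stage game decouples completely from the first two. I would then verify the three stages in the order in which they are nested in Definition \ref{def:CFS}: first, for arbitrary $\gamma_k^1,\gamma_k^2$, the $v$-subgame is $\bm{\Uplambda}_k(x_k)$ and Lemma \ref{lem:nLCP} yields the unique Nash pair $(\mathsf{v}_k^\star,\upmu_k^\star)$, establishing \eqref{eq:kstage31}--\eqref{eq:kstage32} and hence \eqref{eq:nash1}--\eqref{eq:nash2}; second, with $v_k^\star$ fixed the term $g_{v,k}^i$ is constant in $(u_k^1,u_k^2)$, so the follower's problem \eqref{eq:followerresponse} reduces to minimizing $\tilde{G}_{u,k}^2$, which equals $g_{u,k}^2(x_k,\gamma_k^1(x_k),\cdot)+W^2(k+1,f_k(x_k,\gamma_k^1(x_k),\cdot))$ under the induction hypothesis and is strictly convex on $\mathsf{U}_k^2$ by the theorem's hypothesis, making the reaction $\mathsf{R}_k^2(\gamma_k^1)$ a well-defined singleton for every $\gamma_k^1$; third, the leader's problem \eqref{eq:leaderannouncement} reduces to \eqref{eq:leadersproblemfirst}, yielding $\gamma_k^{1\star}$ and then $\gamma_k^{2\star}=\mathsf{R}_k^2(\gamma_k^{1\star})$. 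Adding the optimized stage-$k$ cost to the downstream value reproduces \eqref{eq:leadersproblemrecursive}, i.e. $W^i(k,x_k)=G_k^i(x_k,\psi_k^{1\star},\psi_k^{2\star})$, which closes the induction and shows that $(\psi^{1\star},\psi^{2\star})$ is an FSN solution.

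Strong time consistency follows with little extra work. Because the recursion \eqref{eq:valfuncon} and all three-stage verifications are carried out for every $x_k\in\mathsf{X}_k$ and for arbitrary admissible upstream actions $\psi_{[0,k-1]}^i$, rather than along a single fixed trajectory, the restriction of $(\psi^{1\star},\psi^{2\star})$ to the subgame starting at any reachable $(k,x_k)$ again satisfies the conditions of Definition \ref{def:CFS} for that subgame, which is precisely what strong time consistency requires.

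I expect the main obstacle to be the careful bookkeeping of the nested Stackelberg--Nash optimality conditions: one must confirm that fixing the downstream actions at their FSN values, then fixing the third-stage decisions at the simultaneous Nash equilibrium, and only afterwards solving the sequential leader--follower problem, reproduces \emph{all} of \eqref{eq:nash1}--\eqref{eq:leaderannouncement} jointly rather than a decoupled relaxation of them. The separability Assumption \ref{ass:seperability} is what legitimizes this ordering, since it is the sole reason the simultaneous decisions can be computed independently of $(u_k^1,u_k^2)$; and verifying that the follower's reaction set stays single-valued, so the leader can genuinely anticipate it as the Stackelberg construction demands, is the one place where the strict-convexity hypothesis of the theorem is indispensable.
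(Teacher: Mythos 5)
Your proposal is correct and follows essentially the same route as the paper's proof: backward induction over reachable states, with the base case at $K$ and the inductive step both resolved by Lemma \ref{lem:nLCP} for the decoupled third-stage Nash game, the strict-convexity hypothesis for the singleton follower reaction, separability (via \eqref{eq:Gtilde}--\eqref{eq:Gseparability}) to reduce the sequential stages to \eqref{eq:RRinsubgameproblem}--\eqref{eq:leadersproblemfirst}, and strong time consistency read off from the fact that the construction is carried out for every $x_k\in\mathsf{X}_k$ rather than along a single trajectory. Your explicit framing of the induction hypothesis as the identity $G_{k+1}^i=W^i(k+1,\cdot)$ is just a sharper restatement of what the paper writes as the accumulated FSN cost, so there is no substantive difference.
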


\begin{proof}
       We prove the theorem by backward induction. At terminal time \( K \), the inequalities \eqref{eq:Nash1K}-\eqref{eq:Nash2K} must hold for all admissible actions \( \{\psi_{\lbrack 0,K-1]}^{i},~i\in \{1,2\}\} \). Consequently, they must also hold for all reachable states \( x_K \) resulting from these actions. Thus, conditions \eqref{eq:Nash1K}-\eqref{eq:Nash2K} reduce to a static game where each player \( i \in \{1,2\} \) solves  
       \begin{align}
       	v_K^{i\star}\in \argmin_{v_K^{i} \in \V_K^i(v_k^{j\star})}~ g_{K}^i(x_K, (v_K^{i}, v_K^{j\star})), \label{eq:NashatK}
       \end{align}  
       where $j\in\{1,2\}$, $j\neq i$.
       By Assumption \ref{ass:convex} and Lemma \ref{lem:nLCP}, the solution of \eqref{eq:NashatK}, given by \( (\bv_{K}^{\star },{\upmu}_{K}^{\star })=\texttt{SOL}(\mathrm{pCP}_{K}(x_{K})) \), is unique and provides the FSN solution of the subgame at \( K \). Moreover, the FSN decisions \( (v_{K}^{1\star },v_{K}^{2\star }) \) depend only on the current state \( x_K \), not on past state values (including \( x_0 \)).  
       
        Next, we assume the theorem holds up to time period \( k+1 \in \mathsf{K}_r \) backward in time. That is, the strategies \( \{\psi _{\lbrack k+1,K]}^{i\star},~i\in \{1,2\}\} \) obtained from \eqref{eq:leadersproblemrecursive}-\eqref{eq:followerresponse1} satisfy Definition \ref{def:CFS} of an FSN solution. The FSN cost accumulated by player \( i \in \{1,2\} \) for any \( x_{k+1} \in \mathsf{X}_{k+1} \) is  $$
       W^i(k+1, x_{k+1}) =  \sum_{\tau=k+1}^{K}g_{\tau}^i(x_{\tau}, \psi_\tau^{1\star}, \psi_\tau^{2\star}).$$
       We now show that the theorem holds at time \( k \). The inequalities \eqref{eq:nash1}--\eqref{eq:leaderannouncement}, defined at time \( k \) while fixing downstream decisions at the FSN solutions \( \{\psi _{[k+1,K]}^{i\star },~i\in \{1,2\}\} \), correspond to a quasi-hierarchical interaction with three decision stages. These inequalities must hold for all admissible actions \( \{\psi _{\lbrack 0,k-1]}^{i},~i\in \{1,2\}\} \), and consequently, for all states \( x_k \in \mathsf{X}_k \) that are reachable through a combination of these actions.         
       
       Between time instants \( k \) and \( k+1 \), players interact in three decision stages, where each player \( i \in \{1,2\} \) minimizes the cost functional  $$
       g_k^i(x_k, (u_k^{1}, u_k^2), (v_k^1, v_k^{2}))+W^i(k+1, x_{k+1}),$$
       subject to constraints \( h_{k}^{i}(x_{k},\bv_{k})\geq 0,\,v_{k}^{i}\geq 0 \), with \( x_{k+1}=f_{k}(x_{k},u_{k}^{1},u_{k}^{2}) \). From separability Assumption \ref{ass:seperability} and Lemma \ref{lem:nLCP}, the unique Nash outcome \( \bv_k^\star \) of the third-stage problem is given by \eqref{eq:Nash3rdstage}.         
       In the second stage, under the strict convexity assumption of \( g_{u,k}^{2}(x_{k},\gamma _{k}^{1}(x_{k}),\cdotp)+W^{2}(k+1,f_{k}(x_{k},\gamma _{k}^{1}(x_{k}),.) \) on \( \mathsf{U}_{k}^{2} \), the optimal response \( \mathsf{R}_{k}^{2} \), defined by \eqref{eq:RRinsubgameproblem}, is unique. In the first stage, using the follower's response, the leader's problem is given by \eqref{eq:leadersproblemfirst}. Thus, \( W^i(k,x_k) \), given by \eqref{eq:leadersproblemrecursive}, precisely represents the FSN cost for player \( i \) in the subgame starting at \( (k,x_k) \).  
       We note that FSN decisions at each time instant \( k \) depend only on the current \( x_k \) and not on past state values (including \( x_0 \)). The strong time consistency of the FSN solution follows directly from its backward recursive construction.         
\end{proof} 
\begin{remark}
       We note that Theorem \ref{th:VerificationTh1} provides only sufficient conditions for the existence of an FSN solution for CNZDG. This follows from the strict convexity of \( g_{u,k}^{2}(x_{k},\gamma _{k}^{1}(x_{k}),\cdotp) + W^{2}(k+1,f_{k}(x_{k},\gamma_{k}^{1}(x_{k}),\cdotp)) \) over \( \mathsf{U}_{k}^{2} \) and the DSC Assumption \ref{ass:convex} on cost functions \( g_{v,k}^{i} \), which ensure a unique follower response and a unique Nash equilibrium for the static game \( \bm{\Uplambda}_{k}(x_{k}) \) at each time instant.
\end{remark}
 \subsection{Parametric feedback Stackelberg solution and its relation to  FSN solution}\label{sec:FSNfrompFS}
 
       In this subsection, we show that an FSN solution is closely related to a feedback Stackelberg solution of an unconstrained parametric nonzero-sum difference game (pNZDG) involving only sequential interactions. To this end, we first define a pNZDG associated with the CNZDG, with parameters $\{\mathsf{w}_{\tau}:=(w_{\tau }^{1},w_{\tau }^{2})\in \mathbb{R}^{s_{1}+s_{2}},{\uptheta}_{\tau}:=(\theta_{\tau}^{1},\theta _{\tau}^{2})\in \mathbb{R}^{c_{1}+c_{2}},\tau \in \mathsf{K}\}$ as follows: 
 \begin{subequations}\label{eq:pnzdg} 
 	\begin{align}
 		& \mathrm{pNZDG}: ~\min_{\tilde{\bu}^i} \Bigl\{\bar{J}_i\big(x_0,\tilde{\bu}^1,\tilde{\bu}^2;\{(\bw_{\tau}, {\uptheta}_{\tau}) \}_{\tau \in \K}\big)\notag\\
         &\quad= g_{K}^i(x_K, (w_K^1, w_K^2))-\theta_K^{i'}h_K^i(x_K, \bw_K)\notag\\
 		& +\sum_{k=0}^{K-1}\big(g_{k}^i((x_k,(u_k^1, u_k^2)), (w_k^1, w_k^2) )-\theta_k^{i'}h_k^i(x_k, \bw_k)\big)\Bigr\},\label{eq:Pobjective}\\ 
 		&  \text{subject to} ~x_{k+1} = f_k(x_k, u_k^1, u_k^{2}),~ k \in \K_l,~ x_0~\text{given}.\label{eq:Pstate}
 	\end{align}
 \end{subequations}
        In pNZDG, the players interact sequentially in the decision variables $(u_{k}^{1},u_{k}^{2})$ at every time instant $k\in \mathsf{K}_{l}$. Under feedback information structure, the control action $u_{k}^{i}$ of player $i$ at instant $k$ is denoted by $u_{k}^{i}:=\xi _{k}^{i}(x_{k})\in \mathsf{U}_{k}^{i}$, where $\xi _{k}^{i}:\mathbb{R}^{n}\rightarrow \mathsf{U}_{k}^{i}$ is a measurable mapping with the set of all such mappings be denoted by $\Xi_{k}^{i}$. The feedback strategy profile of player $i$ be denoted by $\xi^{i}$ and the corresponding strategy set by $\Xi ^{i}=\prod_{k=0}^{K-1}\Xi_{k}^{i}$. As there are no constraints, the reachable set $\bar{\mathsf{X}}_{k}$ is entire $\mathbb{R}^{n}$. The definition of parametric feedback Stackelberg solution for pNZDG follows from the standard feedback Stackelberg solution \cite{Basar:99}, which is given as follows.
 \begin{definition}[{\cite[Definition 3.29]{Basar:99}}]
 	\label{def:pFS}
        A pair $(\xi^{1\star}, \xi^{2\star}) \in (\Xi^1, \Xi^2)$ constitutes a \emph{parametric feedback Stackelberg} (pFS) solution for pNZDG if the following conditions are satisfied
\begin{enumerate}
       \item  For all $\xi^i_{[0,k-1]}\in \Xi_{[0,k-1]}^i$,~$i\in\{1,2\}$ with $k=K-1,\cdots,1$
 	\begin{subequations}\label{eq:pFS}
 		\begin{align}
 			&\bar{J}_1\bigl(x_0, (\xi^1_{[0,k-1]}, \xi_k^{1\star}, \xi^{1\star}_{[k+1,K-1]}), \notag\\
        &\qquad(\xi^2_{[0,k-1]}, {\xi}_k^{2\star}, \xi^{2\star}_{[k+1,K-1]});\{(\bw_{\tau}, {\uptheta}_{\tau}) \}_{\tau \in \K}\bigr)\notag\\
 			&= 
 			\optim{ {\xi}^1_k\in \Xi^1_k}{min}~
 			\max_{\hat{\xi}_k^2\in\bar{\mathsf{R}}^2_k(\xi^1_k) }\bar{J}_1\bigl(x_0, (\xi^1_{[0,k-1]},\xi^{1}_k, \xi^{1\star}_{[k+1,K-1]}),\notag\\
    &\qquad(\xi^2_{[0,k-1]},\hat{\xi}_k^2, \xi^{2\star}_{[k+1,K-1]});\{(\bw_{\tau}, {\uptheta}_{\tau}) \}_{\tau \in \K}\bigr),\label{eq:recursiveleaderp}
 		\end{align}
 where $\bar{\mathsf{R}}^2_k(\xi^1_k)$ is the optimal response set of the follower at time $k$, defined by
 		\begin{align}
 			\bar{\mathsf{R}}^2_k(\xi^1_k) :&=  
 			 \argmin_{\xi_k^2\in \Xi_k^2}\bar{J}_2\bigl(x_0, (\xi^1_{[0,k-1]},\xi_k^1,\xi^{1\star}_{[k+1,K-1]}),\notag\\
 			&(\xi^2_{[0,k-1]}, {\xi}_k^2, \xi^{2\star}_{[k+1,K-1]});\{(\bw_{\tau}, {\uptheta}_{\tau}) \}_{\tau \in \K}\big),
 			\label{eq:rationalreactionsetp}
 		\end{align} 
 	\label{eq:recursivepfs}
 	\end{subequations}
 	\item The optimal response set $\bar{\mathsf R}_k^2(\xi_k^{1\star})$ is a 
          singleton set. \label{itm:requirementp}
 	\end{enumerate}
       Further, $J_1(x_0, \xi^{1\star}, \xi^{2\star};\{(\mathsf{w}_{\tau}{\uptheta}_{\tau}) \}_{\tau \in \mathsf{K}})$ and $J_2(x_0, \xi^{1\star}, \xi^{2\star};$ $\{(\mathsf{w}_{\tau}, {\uptheta}_{\tau}) \}_{\tau \in \mathsf{K}})$ represent the parametric feedback Stackelberg costs incurred by the leader and the follower, respectively, with parameters $\{(\mathsf{w}_{\tau}, {\uptheta}_{\tau}) \}_{\tau \in \mathsf{K}}$.
\end{definition}

 From Assumption \ref{ass:seperability}, the cost functions \( g_k \), \( k \in \mathsf{K}_l \), in \eqref{eq:Pobjective} have a separable structure, that is, they do not contain cross-terms involving the decision variables $(u_k^1,u_k^2)$ and the parameters $(w_k^1,w_k^2)$.  The recursive formulation of a pFS solution for pNZDG follows from \eqref{eq:recursiveleaderp} and \eqref{eq:rationalreactionsetp}.
        Using the standard feedback-Stackelberg solution \cite[Theorem 7.2]{Basar:99}, the next lemma characterizes the structure of a pFS solution for pNZDG. We omit its proof, as it follows directly from \cite[Theorem 7.2]{Basar:99}.

 \begin{lemma}\label{lem:VerificationTh2}
       Consider the pNZDG described by \eqref{eq:pnzdg}. Let Assumption \ref{ass:seperability} hold. If there exist functions $W_{p}^{i}(k,\,.\,):\bar{\mathsf{X}}_{k}\rightarrow \mathbb{R}$, $k\in \mathsf{K},~i\in \{1,2\}$ such that for all $k\in \mathsf{K}_{l}$ and $\forall \xi _{k}^{1}\in \Xi_{k}^{1},~g_{u,k}^{2}(x_{k},\xi _{k}^{1}(x_{k}),\cdotp)+W_{p}^{2}(k+1,f_{k}(x_{k},\xi _{k}^{1}(x_{k}),\cdotp))$ is strictly convex on $\mathsf{U}_{k}^{2}$ $(k\in \mathsf{K}_{l})$ and $\forall k\in \mathsf{K},~W_{p}^{i}(k,\cdotp)$ satisfy the following (backward) recursive relations:
 		\begin{enumerate}
 		\item At period $K$
 		\begin{subequations}\label{eq:valfunpar}
 		\begin{align}	
       & W_p^i(K, x_K;(\bw_{K}, {\uptheta}_{K}))\notag\\
       &=g_{K}^i(x_K, \bw_K)-{\theta_K^{i}}^\prime h_K^i(x_{K}, \bw_{K}),~ i \in \{1,2\}. \label{eq:PWatK}  \end{align}  
 		\item At periods $k=K-1,\cdots,1,0$
 		\begin{align}
 	&W_p^i\big(k, x_k;\{(\bw_{\tau}, {\uptheta}_{\tau})\}_{\tau=k}^K\big)= g_{k}^i(x_k, \xi_k^{1\star}(x_k),  \xi_k^{2\star}(x_k), \bw_k)\notag\\
     &\quad-{\theta_k^{i}}^\prime h_k^i(x_k, \bw_k)+W_p^i\bigl(k+1, f_k(x_k, \xi_k^{1\star}(x_k), \xi_k^{2\star}(x_k));\notag\\
 	&\hspace{1.75in} \{(\bw_{\tau}, {\uptheta}_{\tau})\}_{\tau=k+1}^K\bigr) ,\label{eq:WPleaderrecursive1}
 		\end{align}
 		where
 		\begin{align} 
 			&\bar{\mathsf{R}}_k^2
 		(\xi_k^1)= \optim{\xi_k^{2} \in \Xi^2_k}{arg min}\Big\{g_{u,k}^2(x_k,  \xi_k^{1}(x_k), \xi_k^{2}(x_k) ) +W_p^2\bigl(k+1, \nonumber\\
 		&\hspace{0.45in} f_k(x_k, \xi_k^{1}(x_k), \xi_k^{2}(x_k));\{(\bw_{\tau}, {\uptheta}_{\tau})\}_{\tau=k+1}^K\bigr) \Big\},\label{eq:ParametricRR}\\
 			& \xi_k^{1\star}\in \optim{\xi_k^{1} \in \Xi^1_k}{argmin}~\Big\{g_{u,k}^1\bigl(x_k, \xi_k^{1}(x_k), (\bar{\mathsf{R}}_k^2\circ \xi_k^1)(x_k) \bigr)\notag\\
 		&\hspace{0.8 in} +W_p^1\bigl(k+1, f_k\bigl(x_k, \xi_k^{1}(x_k), (\bar{\mathsf{R}}_k^2\circ \xi_k^1)(x_k)\bigr);\notag\\
   &\hspace{1.25in}\{(\bw_{\tau}, {\uptheta}_{\tau})\}_{\tau=k+1}^K\bigr) \Big\},\label{eq:WPleaderrecursive}\\
 		&\xi_k^{2\star}=\bar{\mathsf{R}}_k^{2}
 		(\xi_k^{1\star}).\label{eq:pfollower}
 		\end{align}  
 \end{subequations}
 	\end{enumerate}
       Then, the strategy pair $\{\xi^{1\star},~\xi^{2\star}\}$ constitutes a pFS solution for pNZDG. Further, every such solution is strongly time consistent, and has a parametric representation given by 
 	\begin{align}
 		\big\{\xi_k^{i\star}(x_k;\{(\bw_{\tau}, {\uptheta}_{\tau})\}_{\tau=k+1}^K),\, k \in \K_l,~ i \in \{1,2\} 
 		\big\}.\label{eq:Parametricsolutions}
 	\end{align}  
 \end{lemma}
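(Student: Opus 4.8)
The plan is to establish Lemma \ref{lem:VerificationTh2} by backward induction on the time index $k$, mirroring the structure of the proof of Theorem \ref{th:VerificationTh1} but specialized to the \emph{unconstrained sequential} game pNZDG. Since the paper explicitly states that this result ``follows directly from \cite[Theorem 7.2]{Basar:99}'', the task is really to verify that the separable-with-parameters structure of \eqref{eq:Pobjective} fits the hypotheses of the canonical feedback-Stackelberg verification theorem, and that the parameters $\{(\mathsf{w}_\tau,\uptheta_\tau)\}$ enter only as fixed exogenous data that does not interfere with the recursion. The key observation is that in pNZDG the parameters $(w_k^1,w_k^2)$ and $(\theta_k^1,\theta_k^2)$ appear only through the terms $g_k^i(x_k,u_k,\mathsf{w}_k)-{\theta_k^i}'h_k^i(x_k,\mathsf{w}_k)$, and by Assumption \ref{ass:seperability} these decouple so that the summand depending on the active decision variables $(u_k^1,u_k^2)$ is exactly $g_{u,k}^i(x_k,u_k^1,u_k^2)$, with the parameter-dependent pieces being additive constants (in the decision variables) at each stage.

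First I would set up the induction. At the terminal stage $K$ there are no sequential decision variables, so $W_p^i(K,x_K;(\mathsf{w}_K,\uptheta_K))$ is simply the terminal payoff evaluated at the fixed parameters, giving \eqref{eq:PWatK} directly. For the inductive step I would assume that $\{\xi_{[k+1,K-1]}^{i\star}\}$ constitutes a pFS solution for the truncated game starting at $k+1$, with accumulated cost-to-go $W_p^i(k+1,\cdot\,;\{(\mathsf{w}_\tau,\uptheta_\tau)\}_{\tau=k+1}^K)$. Fixing downstream strategies at these values reduces the stage-$k$ problem to a two-level static optimization in $(u_k^1,u_k^2)$: the follower minimizes $g_{u,k}^2(x_k,\xi_k^1(x_k),\cdot)+W_p^2(k+1,f_k(\cdots))$ over $\mathsf{U}_k^2$, which is \eqref{eq:ParametricRR}, and under the stated strict-convexity hypothesis this optimal response $\bar{\mathsf{R}}_k^2(\xi_k^1)$ is a singleton. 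Substituting the follower's unique response into the leader's problem yields \eqref{eq:WPleaderrecursive}, and the resulting value is precisely \eqref{eq:WPleaderrecursive1}, closing the induction. Because $\bar{\mathsf{R}}_k^2$ is single-valued, the $\min\max$ in \eqref{eq:recursiveleaderp} collapses to a plain minimization, so requirement \eqref{itm:requirementp} of Definition \ref{def:pFS} is automatically satisfied.

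For the strong time consistency and the parametric representation \eqref{eq:Parametricsolutions}, I would argue that since each $\xi_k^{i\star}$ is obtained by a stagewise optimization depending only on the current state $x_k$ and on the \emph{future} parameters $\{(\mathsf{w}_\tau,\uptheta_\tau)\}_{\tau=k+1}^K$ (through $W_p^i(k+1,\cdot)$ and the current-stage parameter $(\mathsf{w}_k,\uptheta_k)$), the solution restricted to any subgame starting at $(k,x_k)$ remains a pFS solution of that subgame. This is the standard consequence of the backward dynamic-programming construction and is identical in spirit to the strong time consistency argument already given at the end of the proof of Theorem \ref{th:VerificationTh1}. The parametric dependence claimed in \eqref{eq:Parametricsolutions} then follows by inspection of which parameters enter the stage-$k$ recursion.

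The main obstacle I anticipate is not any deep analytical difficulty --- the result is essentially a bookkeeping specialization of \cite[Theorem 7.2]{Basar:99} --- but rather making precise that the parameter-dependent terms $-{\theta_k^i}'h_k^i(x_k,\mathsf{w}_k)$ and the $\mathsf{w}_k$-dependent parts of $g_k^i$ are genuinely inert with respect to the active optimization in $(u_k^1,u_k^2)$. One must confirm via Assumption \ref{ass:seperability} that no cross-terms couple the decision variables $(u_k^1,u_k^2)$ to the parameters, so that these terms contribute only state-dependent (but decision-independent) offsets that pass through the $\argmin$ operations unchanged and are correctly absorbed into the value functions $W_p^i$. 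Given that the authors explicitly omit the proof and defer to \cite{Basar:99}, the cleanest presentation simply invokes that theorem after this verification, rather than re-deriving the feedback-Stackelberg recursion from scratch.
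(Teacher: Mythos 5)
Your proposal is correct and matches the paper's approach: the paper itself omits the proof of Lemma \ref{lem:VerificationTh2}, deferring entirely to \cite[Theorem 7.2]{Basar:99}, and your sketch is precisely the standard backward-induction verification that citation encodes, together with the right observation that Assumption \ref{ass:seperability} makes the parameter terms $-{\theta_k^i}'h_k^i(x_k,\mathsf{w}_k)$ and the $\mathsf{w}_k$-dependent costs decision-independent offsets, so the stage-$k$ strategies depend only on the downstream parameters as claimed in \eqref{eq:Parametricsolutions}.
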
 
 \begin{remark}\label{rem:PValuefnc}
       We notice that for any time $k\in \mathsf{K}_{l}$, and any $x_{k}\in \bar{\mathsf{X}}_{k}$, $W_{p}^{i}(k,x_{k};\{(\mathsf{w}_{\tau },{\uptheta}_{\tau})\}_{\tau =k}^{K})$ denotes the cost incurred by player $i$ using the pFS strategies $(\xi _{\lbrack k,K-1]}^{1\star },\xi _{[k,K-1]}^{2\star})$ and thus it represents the value function for player $i$. Here, the value functions in \eqref{eq:WPleaderrecursive1} are obtained by solving the game recursively backwards, and depend on the downstream parameters. For this reason, we represent the value function at time instant $k$ explicitly in parametric form as $W_{p}^{i}(k,x_{k};\{(\mathsf{w}_{\tau },{\uptheta}_{\tau})\}_{\tau =k}^{K})$.  Consequently, the pFS solution in \eqref{eq:Parametricsolutions}, derived from \eqref{eq:ParametricRR}-\eqref{eq:WPleaderrecursive}, also depends on the downstream parameters.
\end{remark}
       Next, we present the main result of this section where we show that an FSN solution for CNZDG can be obtained from a pFS solution for pNZDG through a specific choice of the parameters.
 \begin{theorem}\label{thm:equivtheorem}
       Consider the CNZDG described by \eqref{eq:state}-\eqref{eq:objective} and the pNZDG described by \eqref{eq:pnzdg}. Let Assumptions \ref{ass:A1}, \ref{ass:seperability} and \ref{ass:convex} hold true. Assume there exist functions $W^{i}(k,\,.\,):\mathsf{X}_{k}\rightarrow \mathbb{R}$ and $W_{p}^{i}(k,\,.\,):\bar{\mathsf{X}}_{k}\rightarrow \mathbb{R}$ for $k\in \mathsf{K}$, $i\in \{1,2\}$ satisfying conditions \eqref{eq:valfuncon} and \eqref{eq:valfunpar}, respectively. Further, assume that for all $k\in  \mathsf{K}_{l}$ the functions $g_{u,k}^{2}(x_{k},\gamma _{k}^{1}(x_{k}),\cdotp)+W^{2}(k+1,f_{k}(x_{k},\gamma _{k}^{1}(x_{k}),\cdotp)),~\forall \gamma _{k}^{1}\in \Gamma _{k}^{1}$ and $g_{u,k}^{2}(x_{k},(\xi_{k}^{1}(x_{k}),\cdotp),\cdotp)+W_{p}^{2}(k+1,f_{k}(x_{k},\xi_{k}^{1}(x_{k}),\cdotp)),~\forall \xi _{k}^{1}\in \Xi _{k}^{1}$ are strictly convex on $\mathsf{U}_{k}^{2}$. Let for player $i\in \{1,2\}$, $\{\xi_{k}^{i\star }(x_{k};\{(\mathsf{w}_{\tau },{\uptheta}_{\tau })\}_{\tau=k}^{K}),\,k\in \mathsf{K}_{l}$ denote a pFS solution for PNZDG. Then, an FSN solution for player $i\in \{1,2\}$ satisfies  
 \begin{subequations}\label{eq:Thequ}
 	\begin{align}
 		&\gamma_k^{i\star}(x_k) = \xi_k^{i\star}(x_k;\{(\bv_{\tau}^{\star}, {\upmu}_{\tau}^{\star})\}_{\tau=k+1}^K),~\forall k \in \K_l,\label{eq:equivofsol}
 	\end{align} 
       where $(\bv^{\star}_{k}, {\mu}_{k}^{\star}) = \texttt{SOL}(\mathrm{pCP}_{k}(x_{k}))$ for all $k\in \mathsf K$, and $\{x_k,~ k \in \K\}$ is the state trajectory generated by the difference equation
 	\begin{align*}
 		x_{k+1} = f_k\big(x_k, \xi_k^{1\star}(x_k;\{(\bv_{\tau}^{\star}, {\upmu}_{\tau}^{\star})\}_{\tau=k+1}^K), \xi_k^{2\star}(x_k;\{(\bv_{\tau}^{\star}, {\upmu}_{\tau}^{\star})\}_{\tau=k+1}^K)\big).  
 	\end{align*}
       Further, for all $k\in \mathsf K$  and $i\in \{1,2\}$ the (value) function $W^i(k,.): \mathsf{X}_k\rightarrow \mathbb R$   satisfies 
 	\begin{align}
 		W^i(k, x_k) = W_p^i(k, x_k;\{(\bv_{\tau}^{\star}, {\upmu}_{\tau}^{\star})\}_{\tau=k}^K),\quad x_k \in \mathsf{X}_k.\label{eq:WequivWp}
 	\end{align}
 \end{subequations}
 \end{theorem}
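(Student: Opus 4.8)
The plan is to proceed by backward induction on $k$, simultaneously establishing the strategy identity \eqref{eq:equivofsol} and the value-function identity \eqref{eq:WequivWp} under the parameter choice $(\bw_\tau,\uptheta_\tau)=(\bv_\tau^\star,\upmu_\tau^\star)$, where $(\bv_\tau^\star,\upmu_\tau^\star)=\texttt{SOL}(\mathrm{pCP}_\tau(x_\tau))$ is the unique (by Lemma \ref{lem:nLCP}) third-stage solution. The single observation driving the whole argument is that, at this parameter choice, the penalty terms in the pNZDG objective \eqref{eq:Pobjective} vanish: the complementarity built into \eqref{eq:nLCP} gives $h_k(x_k,\bv_k^\star)^{\prime}\upmu_k^\star=0$, and since every block satisfies $h_k^i\ge 0$ and $\mu_k^{i\star}\ge 0$, each term obeys ${\mu_k^{i\star}}^{\prime}h_k^i(x_k,\bv_k^\star)=0$. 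Consequently, with $\bw_k=\bv_k^\star$, $\uptheta_k=\upmu_k^\star$, and using separability (Assumption \ref{ass:seperability}), the $k$-th stage term of the pNZDG cost collapses to $g_{u,k}^i(x_k,\bu_k)+g_{v,k}^i(x_k,\bv_k^\star)=g_k^i(x_k,\bu_k,\bv_k^\star)$, i.e.\ the CNZDG stage cost evaluated at the FSN third-stage actions.

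For the base case $k=K$, comparing the terminal value functions in \eqref{eq:valfuncon} and \eqref{eq:valfunpar} and cancelling the penalty term by complementarity yields $W^i(K,x_K)=g_K^i(x_K,\bv_K^\star)=W_p^i(K,x_K;(\bv_K^\star,\upmu_K^\star))$ for every $x_K\in\X_K\subseteq\bar{\X}_K$. For the inductive step, assume \eqref{eq:WequivWp} holds at $k+1$. Substituting $W^2(k+1,\cdot)=W_p^2(k+1,\cdot;\{(\bv_\tau^\star,\upmu_\tau^\star)\}_{\tau=k+1}^K)$ into the FSN follower problem \eqref{eq:RRinsubgameproblem} turns it into exactly the pFS follower problem \eqref{eq:ParametricRR}; the strict-convexity hypotheses make both minimizers singletons, so $\mathsf{R}_k^2\equiv\bar{\mathsf{R}}_k^2$. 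Feeding this common reaction map together with $W^1(k+1,\cdot)=W_p^1(k+1,\cdot;\cdots)$ into the leader problem \eqref{eq:leadersproblemfirst} makes it coincide with \eqref{eq:WPleaderrecursive}, whence $\gamma_k^{1\star}=\xi_k^{1\star}$ and therefore $\gamma_k^{2\star}=\mathsf{R}_k^2(\gamma_k^{1\star})=\bar{\mathsf{R}}_k^2(\xi_k^{1\star})=\xi_k^{2\star}$; this is \eqref{eq:equivofsol} at $k$. Inserting these equalities, separability, and the vanishing penalty term into the FSN value recursion \eqref{eq:leadersproblemrecursive} reproduces the pFS value recursion \eqref{eq:WPleaderrecursive1}, giving \eqref{eq:WequivWp} at $k$ and closing the induction.

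The delicate point---where I expect the main difficulty---is the state dependence of the chosen parameters. The pFS objects of Lemma \ref{lem:VerificationTh2} are derived for \emph{constant} parameters, whereas here $(\bv_\tau^\star,\upmu_\tau^\star)$ depend on the downstream states $x_\tau$ realised along the trajectory. The reconciliation rests on noting that the pFS sequential strategy $\xi_k^{i\star}(x_k;\{(\bw_\tau,\uptheta_\tau)\}_{\tau=k+1}^K)$ depends only on the \emph{downstream} parameters (indices $\tau\ge k+1$) and not on $(\bw_k,\uptheta_k)$: the stage-$k$ penalty ${\theta_k^{i}}^{\prime}h_k^i(x_k,\bw_k)$ is constant with respect to the sequential decisions, and by separability $g_{u,k}^i$ carries no dependence on $\bw_k$. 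Hence at each stage the minimizations defining $\mathsf{R}_k^2$ and $\gamma_k^{1\star}$ require only the already-matched downstream value function, so the recursion closes pointwise in $x_k$ without ever fixing the stage-$k$ parameter in advance; evaluating $(\bv_\tau^\star,\upmu_\tau^\star)$ along the feedback-generated trajectory is then self-consistent. Strong time consistency of the resulting FSN solution follows, as in Theorem \ref{th:VerificationTh1}, from its backward-recursive construction.
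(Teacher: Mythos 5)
Your proposal follows essentially the same route as the paper's proof: backward induction with the parameter choice $(\bw_\tau,{\uptheta}_\tau)=(\bv_\tau^\star,{\upmu}_\tau^\star)=\texttt{SOL}(\mathrm{pCP}_\tau(x_\tau))$, blockwise cancellation of the penalty terms by complementary slackness, identification of the stage-$k$ follower and leader problems, uniqueness of the reaction maps via strict convexity, and resolution of the parameter self-consistency through the fact that the pFS strategies depend only on downstream parameters (this matches the paper's parametric representation \eqref{eq:Parametricsolutions} and Remark \ref{rem:PValuefnc}).

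There is, however, one genuine gap. When you write that substituting the induction hypothesis makes \eqref{eq:RRinsubgameproblem} ``exactly'' \eqref{eq:ParametricRR} and \eqref{eq:leadersproblemfirst} coincide with \eqref{eq:WPleaderrecursive}, ``whence $\gamma_k^{1\star}=\xi_k^{1\star}$,'' you are equating minimizers taken over \emph{different strategy spaces}: the FSN problems are minimized over $\Gamma_k^2$ and $\Gamma_k^1$ (feedback maps into the constrained admissible sets $\U_k^i$, i.e.\ actions for which the downstream sets $\V_{k+1}$ are nonempty), whereas the pFS problems are minimized over the unconstrained spaces $\Xi_k^2$ and $\Xi_k^1$, with $\Gamma_k^i\subset\Xi_k^i$. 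Identical objectives on the smaller set do not by themselves give coincidence of the argmins: if the unconstrained pFS minimizer failed to lie in $\Gamma_k^i$, the asserted equality would be false. Worse, $W^i(k+1,\cdot)$ is only defined on the reachable set $\X_{k+1}$, so your substitution of the induction hypothesis into the FSN stage problem is only meaningful for admissible actions in the first place; for the pFS problem one genuinely needs $W_p^i(k+1,\cdot\,;\cdot)$, defined on all of $\bar{\X}_{k+1}=\mathbb{R}^n$.

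The paper closes exactly this gap with an admissibility argument that your proposal omits: with the downstream parameters fixed at $(\bv_\tau^\star,{\upmu}_\tau^\star)$, the next state $x_{k+1}=f_k(x_k,\xi_k^{1\star}(x_k),\xi_k^{2\star}(x_k))$ generated by the pFS strategies satisfies $\texttt{SOL}(\mathrm{pCP}_{k+1}(x_{k+1}))\neq\emptyset$, and Assumption \ref{ass:A1}.\ref{itm:1} then guarantees that the pFS strategies are admissible for the constrained game, i.e.\ $\xi_k^{i\star}(\,\cdot\,;\{(\bv_\tau^\star,{\upmu}_\tau^\star)\}_{\tau=k+1}^K)\in\Gamma_k^i$. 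Combined with the identity of the objectives and the strict-convexity (singleton) hypotheses, this is what legitimately yields $\mathsf{R}_k^2(\gamma_k^1)=\bar{\mathsf{R}}_k^2(\gamma_k^1)$ for all $\gamma_k^1\in\Gamma_k^1$ and the coincidence of the leader's optimal announcement sets over $\Gamma_k^1\subset\Xi_k^1$. Adding this step would make your argument complete; without it, the central identification $\gamma_k^{i\star}=\xi_k^{i\star}$ is unjustified.
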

 \begin{proof}
 	 We prove the theorem using the backward induction principle. The terminal game associated with CNZDG at time instant $K$, with $x_{K} \in \mathsf{X}_{K}$, is characterized by \eqref{eq:termcongame}. By Assumption \ref{ass:convex} and Lemma \ref{lem:nLCP}, there exists a unique $(\bv_{K}^{\star}, {\upmu}_{K}^{i\star }) = \texttt{SOL}(\mathrm{pCP}_{K}(x_{K}))$. Setting $(\bw_{K}, {\uptheta}_{K}) = (\bv_{K}^{\star}, {\upmu}_{K}^{\star })$ in \eqref{eq:PWatK} and using the FSN costs \eqref{eq:termcongame} along with the complementarity condition ${\mu_{K}^{i\star}}^{\prime }h_{K}^{i}(x_{K}, \bv_{K}^{\star}) \equiv 0$ for $i \in \{1,2\}$, we obtain
 	\begin{align*}
 		W^i(K, x_K) &= g_{K}^i(x_K, (v_K^{1\star}, v_K^{2\star})) - {\mu_K^{i\star}}^\prime h_K^i(x_{K}, \bv_{K}^{\star})
 		\\&= W_p^i(K, x_K; (\bv_{K}^{\star}, {\upmu}_{K}^{\star})).
 	\end{align*} 
 	Thus, the theorem holds at terminal time.   Next, assuming that the theorem holds for time $k+1$, we show it holds for $k$, where $k = K-1, \dots, 0$. By the induction hypothesis, for $l = k+1$ to $l = K-1$, the FSN solution of player $i \in \{1,2\}$ relates to the pFS solution as
 	\begin{align}
 		\gamma_{l}^{i\star}(x_l) = \xi_l^{i\star}(x_l; \{(\bv_{\tau}^{\star}, {\upmu}_{\tau}^{\star})\}_{\tau=l+1}^K).
 	\end{align}
 	The FSN cost of player $i$ in the subgame starting at $(k+1, x_{k+1})$ is given by
 	\begin{align}
 		W^i(k+1, x_{k+1}) = W_p^i(k+1, x_{k+1}; \{(\bv_{\tau}^{\star}, {\upmu}_{\tau}^{\star})\}_{\tau=k+1}^K), \label{eq:PiWKn}
 	\end{align}
 	where $(\bv_{\tau }^{\star }, {\upmu}_{\tau }^{\star }) = \texttt{SOL}(\mathrm{pCP}_{\tau }(x_{\tau }))$ for $\tau = k+1, \dots, K$.
 	 Now, consider the static game at time $k$ characterized by \eqref{eq:nash1}-\eqref{eq:leaderannouncement} for any $x_k \in \mathsf{X}_k$, with player $i$'s cost function
\begin{align}
	g_k^i(x_k, (u_k^{1}, u_k^2), (v_k^1, v_k^{2})) + W^i(k+1, f_k(x_k, u_k^1, u_k^2)).
	\label{eq:playericostinstantk}
\end{align}
       From Assumption \ref{ass:convex}, and Lemma \ref{lem:nLCP}, the third stage decisions are obtained as the unique Nash outcome given by $(\bv_{k}^{\star},{\upmu}_{k}^{\star })=\texttt{SOL}(\mathrm{pCP}_{k}(x_{k}))$. Using \eqref{eq:PiWKn} in the second and first stage decision problems \eqref{eq:RRinsubgameproblem} and \eqref{eq:leadersproblemfirst}, respectively, we get 
 \begin{align} 
       &\mathsf{R}^2_{k}(\gamma_{k}^1)= \underset{\gamma_{k}^{2} \in \Gamma_k^2}{\mathrm{arg min}}\,\Big\{g_{u, k}^2\bigl(x_k, \gamma_k^1(x_k), \gamma_k^2(x_k)\bigr)\notag \\
       &~+W^2_p\bigl(k+1, f_k(x_k, \gamma_k^1(x_k), \gamma_k^2(x_k));\{(\bv_{\tau}^{\star}, {\upmu}_{\tau}^{\star})\}_{\tau=k+1}^K\bigr) \Big\},
 \label{eq:P2RRCKn}\\
       &\gamma_k^{1\star}\in \underset{\gamma_{k}^{1} \in \Gamma_k^1}{\mathrm{arg min}}\,\Big\{g_{u, k}^1\bigl(x_k, \gamma_k^1(x_k), (\mathsf R_k^2 \circ \gamma_k^1)(x_k)\bigr)+W^1_p\bigl(k+1,\notag \\
       &\hspace{0.45in} f_k(x_k, \gamma_k^1(x_k), (\mathsf R_k^2 \circ \gamma_k^1)(x_k));\{(\bv_{\tau}^{\star}, {\upmu}_{\tau}^{\star})\}_{\tau=k+1}^K\bigr) \Big\}. \label{eq:leadercon1}
 \end{align} 
 
        Now, consider the subgame starting at $(k,x_{k})$ for pNZDG with the parameters fixed as $(\bw_{\tau },{\uptheta}_{\tau })=(\bv_{\tau }^{\star },{\upmu}_{\tau }^{\star })=\texttt{SOL}(\mathrm{pCP}_{\tau }(x_{\tau }))$ for $\tau =k+1,\cdots ,K$. The follower's rational response \eqref{eq:ParametricRR}, and the leader's optimization problem \eqref{eq:WPleaderrecursive} are given by 
\begin{align}
 			&\bar{\mathsf{R}}_k^2
 			(\xi_k^1)=\underset{\xi_{k}^{2} \in \Xi_k^2}{\mathrm{arg min}}\,\Big\{g_{u, k}^2\big(x_k, \xi_k^{1}(x_k), \xi_n^{2}(x_k)\big) \nonumber\\
 			&~+W_p^2\bigl(k+1,f_k(x_k, \xi_n^{1}(x_k), \xi_n^{2}(x_k));\{(\bv_{\tau}^{\star}, {\upmu}_{\tau}^{\star})\}_{\tau=k+1}^K\bigr) \Big\},\label{eq:P2RRWpCkn} \\
 			 &\xi_k^{1\star}\in \optim{\xi_k^{1} \in \Xi^1_k}{argmin}~\Big\{g_{u,k}^1\bigl(x_k, \xi_k^{1}(x_k), (\bar{\mathsf{R}}_k^2\circ \xi_k^1)(x_k)\bigr)+W_p^1\bigl(k+1,\notag\\
 			&\hspace{0.35in} f_k\bigl(x_k, \xi_k^{1}(x_k), (\bar{\mathsf{R}}_k^2\circ \xi_k^1)(x_k)\bigr); \{(\bv_{\tau}^{\star}, {\upmu}_{\tau}^{\star})\}_{\tau=k+1}^K\bigr)\Big\}.
 			 \label{eq:leaderpar1}
\end{align}
              
           In the follower's optimization problems \eqref{eq:P2RRCKn} and \eqref{eq:P2RRWpCkn}, the strict convexity of the objective function over $\mathsf{U}_{k}^{2}$ ensures that the optimal reaction sets $\mathsf{R}_{k}^{2}(\gamma _{k}^{1})$ and $\bar{\mathsf{R}}_{k}^{2}(\xi _{k}^{1})$ are singletons. Since these functions share the same form, it follows that ${\mathsf{R}}_{k}^{2}(\gamma_{k}^{1}) = \bar{\mathsf{R}}_{k}^{2}(\gamma _{k}^{1})$ for all $\gamma_{k}^{1} \in \Gamma _{k}^{1}$. Thus, the follower's response remains unchanged across both problems for any leader's announcement in $\Gamma _{k}^{1}$.  Similarly, in the leader's optimization problems \eqref{eq:leadercon1} and \eqref{eq:leaderpar1}, the objective functions are identical. Restricting the leader's strategy space in these problems to $\Gamma_{k}^{1} \subset \Xi _{k}^{1}$ therefore yields the same optimal strategy sets.

         For pNZDG, Lemma \ref{lem:VerificationTh2} establishes that the pFS solution of \eqref{eq:P2RRWpCkn} and \eqref{eq:leaderpar1} takes the form $\xi_k^{i\star}(x_k; \{(\bv_{\tau}^{\star}, {\upmu}_{\tau}^{\star})\}_{\tau=k+1}^K) \in \Xi_k^i$ for $i=1,2$. The downstream parameters in these strategies are set as $(\bw_{\tau}, {\uptheta}_{\tau}) = (\bv^{\star}_{\tau}, {\upmu}_{\tau}^{\star}) = \texttt{SOL}(\mathrm{pCP}_{\tau}(x_{\tau}))$ for $\tau = k+1, \dots, K$. At time $k$, using the pFS solutions, we obtain $x_{k+1} = f(x_k, \xi_k^{1\star}(x_k), \xi_k^{2\star}(x_k))$, which must satisfy $\texttt{SOL}(\mathrm{pCP}_{\tau}(x_{k+1})) \neq \emptyset$ since the downstream parameters are fixed. From Assumption \ref{ass:A1}.\ref{itm:1}, this ensures that the pFS strategies at time $k$ remain admissible, i.e., $\xi_k^{i\star}(x_k; \{(\bv_{\tau}^{\star}, {\upmu}_{\tau}^{\star})\}_{\tau=k+1}^K) \in \Gamma_k^i$ for $i=1,2$. Finally, given the uniqueness of the follower's response in \eqref{eq:P2RRCKn} and \eqref{eq:P2RRWpCkn}, and the coincidence of the leader's optimal announcement sets over $\Gamma_k^1$ in \eqref{eq:leadercon1} and \eqref{eq:leaderpar1}, the FSN solution at time $k$ follows from the pFS solution as 
         
 \begin{align}
	\gamma_k^{i\star}(x_k)=\xi_k^{i\star}(x_k;\{(\bv_{\tau}^{\star}, {\upmu}_{\tau}^{\star})\}_{\tau=k+1}^K),~i=1,2.\label{eq:P1equality}
\end{align}
        The value function of player $i$ in the subgame starting at $(k,x_k)$ for CNZDG is given by \eqref{eq:leadersproblemrecursive}. From \eqref{eq:PiWKn}, incorporating the complementarity condition $\mu_{k}^{i\star^{\prime}}h_k^i(x_k, \bv_k^{\star}) = 0$, we obtain
 \begin{align*}
       W^i(k, x_k)& =  g_k^i\big(x_k, \gamma_k^{1*}(x_k),   \gamma_k^{2*}  (x_k), (v_k^{1\star}, v_k^{2\star})\big)-\mu_{k}^{i\star'}h_k^i(x_k, \bv_k^{\star})\notag\\
  	&\quad +	W_p^i\big(k+1, x_{k+1};\{(\bv_{\tau}^{\star}, {\upmu}_{\tau}^{\star})\}_{\tau=k+1}^K\big),\notag \\
  	&=W_p^i\big(k, x_k;\{(\bv_{\tau}^{\star}, {\upmu}_{\tau}^{\star})\}_{\tau=k}^K\big).
\end{align*}
       The last equality follows from \eqref{eq:WPleaderrecursive1} by setting $(\bw_k, {\uptheta}_k)=(\bv^{\star}_k, {\upmu}_k^{\star})= \texttt{SOL}(\mathrm{pCP}_{k}(x_{k}))$. Thus, the theorem follows by backward induction.  
 \end{proof}
 Theorem \ref{thm:equivtheorem} is constructive and establishes a relationship between the pFS solution for pNZDG and the FSN solution for CNZDG. Using this, we characterize the FSN solution set. Substituting pFS strategies into \eqref{eq:Pstate} yields the pFS state trajectory:  
 \begin{align}
 	x_{k+1} &= f_k\big(x_k, \xi_k^{1\star}(x_k;\{(\bw_{\tau}, {\uptheta}_{\tau})\}_{\tau=k+1}^K), \notag \\
 	&\hspace{0.95in}\xi_k^{2\star}(x_k;\{(\bw_{\tau}, {\uptheta}_{\tau})\}_{\tau=k+1}^K)\big),~ k \in \K_l. \label{eq:pStatetrajectory}
 \end{align}
 For a given initial condition $x_{0} \in \mathbb{R}^{n}$, this trajectory is fully determined by the choice of parameters \tb{$(\mathsf{w}_{\tau },{\uptheta}_{\tau })\in \mathbb{R}_{+}^{s+c}$, $c:=c_1+c_2$, $\tau \in \mathsf{K}$}. Defining $ x_{\mathsf{K}} := \mathrm{col}(x_{k})_{k=1}^{K}$, $\mathsf{w}_{\mathsf{K}} := \mathrm{col}(\mathsf{w}_{k})_{k=1}^{K}$, and ${\uptheta}_{\mathsf{K}} := \mathrm{col}({\uptheta}_{k})_{k=1}^{K}$, the pFS state trajectory satisfies  
 \begin{align} 
 	x_\mathsf K= \mathcal X_\text{pFS}((\mathsf w_\mathsf K, \boldsymbol \uptheta_\mathsf K);x_0),\label{eq:Xvectoreq}
 \end{align}
 where $\mathcal{X}_{\text{pFS}}:\mathbb{R}_{+}^{(s+c)K}\rightarrow \mathbb{R}^{nK}$ is the vector representation of \eqref{eq:pStatetrajectory} obtained by eliminating $x_{k}$ on the right-hand side. Next, using \eqref{eq:nLCP}, we define $
 	\nabla {L}_{\mathsf{K}}(x_{\mathsf{K}},\mathsf{w}_{\mathsf{K}},{\uptheta}_{\mathsf{K}}) := \mathrm{col}(\nabla {L}_{k}(x_{k},\mathsf{w}_{k},{\uptheta}_{k}))_{k=1}^{K}$ and
 	$h_{\mathsf{K}}(x_{\mathsf{K}},\mathsf{w}_{\mathsf{K}}) := \mathrm{col}(h_{k}(x_{k},\mathsf{w}_{k}))_{k=1}^{K}$. We then formulate the following complementarity problem:
 \begin{align} 
 	&\mathrm{pCP}(x_\K): 0 \leq\begin{bmatrix}
 		\nabla{L}_\K(x_\K,  \bw_\K,{\uptheta}_\K)\\
 		h_\K(x_\K, \bw_\K)
 	\end{bmatrix} \perp\begin{bmatrix}
 		\bw_\K\\
 		{\uptheta}_\K
 	\end{bmatrix} \geq 0.\label{eq:pCompliment}
 \end{align}  
 Under Assumption \ref{ass:convex} and Lemma \ref{lem:nLCP}, this problem has a unique solution if one exists, making $\texttt{SOL}(\mathrm{pCP}):\mathbb{R}^{nK}\rightarrow \mathbb{R}_{+}^{(c+s)K}$ a piecewise single-valued mapping. Now, consider the composite map  
 \begin{align} 
 	\texttt{SOL}(\mathrm{pCP})\circ \mathcal X_\text{pFS}:\mathbb R_+^{(s+c)K}\rightarrow \mathbb R_+^{(s+c)K}. \label{eq:fixedpointpFS}
 \end{align}  
 Its fixed points are given by  
 \begin{align}
 	&\mathsf Q:=\Big\{({\bw_{\K},{\uptheta}_{\K}})\in \mathbb R^{(s+c)K}_+ ~\Big|~ (\bw_k,\boldsymbol \uptheta_k)=\texttt{SOL}(\mathrm{pCP}_k(x_k))\neq\emptyset, \notag \\
 	&\hspace{0.5in} k\in \K_r,~x_0\in \mathbb R^n \text{ (given)}, \notag \\
 	&\hspace{0.5in} x_{k+1}=f_k\big(x_k,\xi_k^{1\star}(x_k;\{(\bw_{\tau}, {\uptheta}_{\tau})\}_{\tau=k+1}^K),\notag \\
 	&\hspace{1in}
 	\xi_k^{2\star}(x_k;\{(\bw_{\tau}, {\uptheta}_{\tau})\}_{\tau=k+1}^K)\big),~k\in \K_l
 	\Big\}.\label{eq:Qset}
 \end{align}  
 The set $\mathsf{Q}$ in \eqref{eq:Qset} is precisely the set of parameters for which the relations in Theorem \ref{thm:equivtheorem} hold. This observation is summarized in the following proposition.
\begin{figure} 
	\centering 
	\includegraphics[scale=0.425]{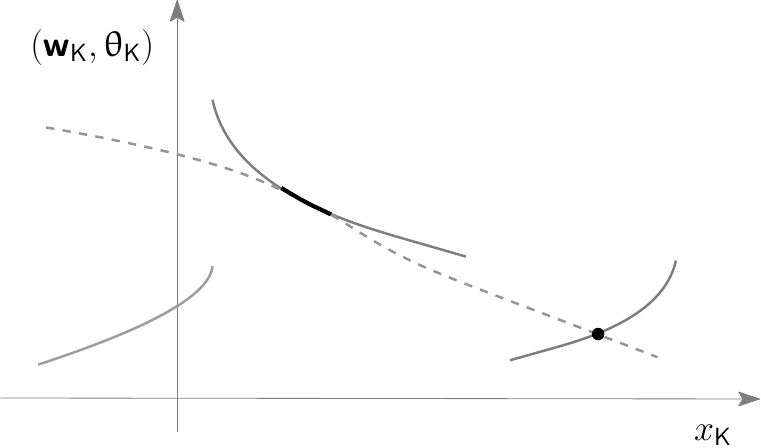}
	\caption{Illustration of set $\mathsf{Q}$  defined in \eqref{eq:Qset} as the intersection of the continuous   map \eqref{eq:Xvectoreq} (gray dashed curve) and piecewise single valued map \eqref{eq:pCompliment} (gray normal curve).}
	\label{fig:solset}
\end{figure} 
 \begin{proposition}\label{prop:fixedpoint}
       Consider the CNZDG described by \eqref{eq:state}-\eqref{eq:objective} and the pNZDG described by \eqref{eq:pnzdg}. Let the assumptions stated in Theorem \ref{thm:equivtheorem} hold. Assume $\texttt{SOL}(\mathrm{pCP}_{0}(x_{0})\neq \emptyset $ and $\mathsf{Q}\neq \emptyset ,$ then the FSN strategies of the players are given by $\{\psi ^{i\star }\equiv (\{(\gamma_{k}^{i\star }(x_{k}),v_{k}^{i\star })\}_{k\in \mathsf{K}_{l}\}},v_{K}^{i\star }),~i\in \mathsf{N}\}$ where the simultaneous decisions $(v_{k}^{1\star },v_{k}^{2\star })$ satisfy $(\mathsf{v}_{0}^{\star },\boldsymbol{\upmu}_{0}^{\star })\in \texttt{SOL}(\mathrm{pCP}_{0}(x_{0}))$ and $(\mathsf{v}_{K}^{\star},\boldsymbol{\upmu}_{K}^{\star })\in \mathsf{Q}$, and the sequential decisions $(\gamma _{k}^{1\star },\gamma _{k}^{2\star}) $ satisfy $\gamma _{k}^{i\star }(x_{k})=\xi _{k}^{i\star }(x_{k};\{(\mathsf{v}_{\tau }^{\star },{\upmu}_{\tau }^{\star })\}_{\tau=k+1}^{K}),~k\in \mathsf{K}_{l}$.
\end{proposition}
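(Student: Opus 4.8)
The plan is to read Proposition \ref{prop:fixedpoint} as a packaging of Theorem \ref{thm:equivtheorem} together with the fixed-point characterization of the consistency requirement, so the proof should be short and should invoke the earlier machinery rather than redevelop it. Theorem \ref{thm:equivtheorem} already shows that, provided the parameters are pinned to the complementarity solutions $(\bw_\tau, \boldsymbol{\uptheta}_\tau) = (\bv_\tau^\star, \boldsymbol{\upmu}_\tau^\star) = \texttt{SOL}(\mathrm{pCP}_\tau(x_\tau))$ \emph{along the state trajectory that the pFS strategies themselves generate}, the pFS solution $\xi_k^{i\star}$ of pNZDG delivers an FSN solution $\gamma_k^{i\star}$ of CNZDG via \eqref{eq:equivofsol}, with matching value functions \eqref{eq:WequivWp}. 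The only gap left by that theorem is the apparent circularity: the trajectory $\{x_k\}$ depends on the parameters through \eqref{eq:pStatetrajectory}, while the parameters depend on $\{x_k\}$ through $\texttt{SOL}(\mathrm{pCP}_k(\cdot))$. First I would make explicit that this circularity is resolved exactly by a fixed point of the composite map \eqref{eq:fixedpointpFS}, i.e., by an element of $\mathsf{Q}$.

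Next I would fix any point of $\mathsf{Q}$, whose existence is guaranteed by the hypothesis $\mathsf{Q} \neq \emptyset$, and set $(\bv_k^\star, \boldsymbol{\upmu}_k^\star)$ equal to its $k$-th block for $k \in \K_r$. By the defining conditions of $\mathsf{Q}$ in \eqref{eq:Qset}, these parameters satisfy $(\bv_k^\star, \boldsymbol{\upmu}_k^\star) = \texttt{SOL}(\mathrm{pCP}_k(x_k))$ at every $k \in \K_r$, where $\{x_k\}$ is precisely the pFS trajectory they induce from the given $x_0$. Thus the hypotheses of Theorem \ref{thm:equivtheorem} hold verbatim, and I would invoke it to conclude $\gamma_k^{i\star}(x_k) = \xi_k^{i\star}(x_k; \{(\bv_\tau^\star, \boldsymbol{\upmu}_\tau^\star)\}_{\tau=k+1}^K)$ for every $k \in \K_l$, together with the uniqueness (Lemma \ref{lem:nLCP}) of the simultaneous-stage equilibria $(v_k^{1\star}, v_k^{2\star})$ recovered from $\texttt{SOL}(\mathrm{pCP}_k(x_k))$ for $k \in \K_r$.

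Finally I would treat the initial boundary, since the collection defining $\mathsf{Q}$ indexes only $k \in \K_r = \{1, \dots, K\}$ and leaves the time-$0$ simultaneous decision unspecified. Here the separate hypothesis $\texttt{SOL}(\mathrm{pCP}_0(x_0)) \neq \emptyset$ supplies $(\bv_0^\star, \boldsymbol{\upmu}_0^\star)$, unique by Lemma \ref{lem:nLCP}; note that the first-stage sequential strategy $\xi_0^{i\star}$ consumes only the downstream parameters $\{(\bv_\tau^\star, \boldsymbol{\upmu}_\tau^\star)\}_{\tau=1}^K$, all furnished by the chosen point of $\mathsf{Q}$, so $\gamma_0^{i\star}$ is well-defined by the same relation \eqref{eq:equivofsol}. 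Assembling the simultaneous decisions from $\texttt{SOL}(\mathrm{pCP}_k(x_k))$ for all $k \in \K$ with the sequential decisions $\gamma_k^{i\star}$ for $k \in \K_l$ yields the claimed profile $\psi^{i\star}$. I expect the main obstacle to be expository rather than technical: stating cleanly why membership in $\mathsf{Q}$ coincides with the self-referential parameter choice required by Theorem \ref{thm:equivtheorem} — in particular verifying that the trajectory appearing in \eqref{eq:Qset} is identical to the one in the statement of that theorem — and handling the off-by-one indexing at $k=0$ so that no parameter is used before it has been fixed.
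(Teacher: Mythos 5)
Your proposal is correct and follows essentially the same route as the paper, whose proof is simply the one-line statement that the result follows from the proof of Theorem \ref{thm:equivtheorem} and the definition of the parameter set $\mathsf{Q}$ in \eqref{eq:Qset}. You merely make explicit what the paper leaves implicit — that an element of $\mathsf{Q}$ is exactly the self-consistent parameter choice (fixed point of \eqref{eq:fixedpointpFS}) required by Theorem \ref{thm:equivtheorem}, and that the hypothesis $\texttt{SOL}(\mathrm{pCP}_{0}(x_{0}))\neq\emptyset$ covers the time-$0$ simultaneous decision not indexed by $\mathsf{Q}$ — which is a faithful elaboration rather than a different argument.
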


\begin{proof} The proof follows from the proof of Theorem \ref{thm:equivtheorem} and the parameter set \eqref{eq:Qset}.
\end{proof}
\begin{remark} \label{rem:multiplicity}
%
From Proposition \ref{prop:fixedpoint}, the FSN solutions correspond to the intersection points of the parametric maps \eqref{eq:Xvectoreq} and \eqref{eq:pCompliment} (see Figure \ref{fig:solset}). This implies that there may exist none (when \( \mathsf{Q} = \emptyset \)), a unique solution, multiple solutions, or even a continuum of solutions (when \( \mathsf{Q} \neq \emptyset \)). In the case of multiple solutions, each FSN state trajectory admits a unique simultaneous equilibrium (by Assumption \ref{ass:convex}) at every time instant. Since the follower’s response is also unique for each leader’s sequential decision \( \gamma_k^{1\star} \), the leader can locally enforce her FSN sequential decision at each time \( k \). Moreover, following Remark \ref{rem:FSNodering}, the leader can order the FSN solutions by her global cost, resulting in admissible FSN solutions.
\end{remark}
\begin{remark}
 Following Theorem \ref{thm:equivtheorem} and Proposition \ref{prop:fixedpoint} the FSN solution is obtained using the parameter set $\mathsf{Q}$ and the pFS solution of the associated unconstrained pNZDG. 
	In particular, the FSN strategies and value functions of the players are obtained as 
	\eqref{eq:equivofsol} and \eqref{eq:WequivWp} respectively. 
\end{remark} 
  \tb 
  	{Although equations \eqref{eq:valfunpar} and \eqref{eq:Qset} characterize the value functions \( W_{p}^{i}(k,\cdot) \) and the set \( \mathsf{Q} \), deriving their exact functional forms for general CNZDGs remains a significant challenge, particularly from a computational standpoint. Even in unconstrained difference games, where feedback Nash and Stackelberg equilibria have been characterized for general nonlinear cases, the literature predominantly relies on linear-quadratic models due to their computational tractability; see \cite{Basar:99} and \cite{Engwerda:05}. Motivated by this, in the next section we demonstrate that for linear-quadratic difference games, the set \( \mathsf{Q} \) can be obtained by solving a linear complementarity problem. Moreover, the parametric value functions \( W_{p}^{i}(k,\cdot) \) can be shown to be quadratic in the state, enabling the determination of FSN solutions.}
  
 \section{Linear-quadratic case}
 \label{sec:CLQDG}
 In this section, we specialize the results of the previous section to a linear-quadratic setting and present a method for computing the FSN solution. To this end, we consider a discrete-time, finite-horizon linear-quadratic difference game with mixed affine coupled inequality constraints (CLQDG). The state variable evolves according to the discrete-time linear dynamics:  
 \begin{subequations}\label{eq:CLQDG}
 \begin{align}
 	x_{k+1} = A_k x_k+B^1_k u_k^1+B^2_k u_k^2,\quad k \in \K_l,\label{eq:LQstate}
 \end{align}
 with a given initial state $x_0 \in \R^n$, where $A_k \in \R^{n\times n}, B_k^{i}\in \R^{n\times m_i}$, $u^i_k \in \U^i_k$ for $i \in \{1,2\}$ and $\U^i_k \subset \R^{m_i}$ represents the admissible action set of player $i$. The mixed coupled constraints \eqref{eq:constraints} for player $ i \in \{1,2\}$ are given by  
 \begin{align}
 	M_k^ix_k+N_k^i\bv_k+r_k^i \geq 0, \, v_k^i\geq 0, ~ k \in \K, \label{eq:LQconstraints}
 \end{align}
 where $M_k^{i} \in \R^{c_i\times n}$, $N_k^{i}\in \R^{c_i\times s}$ and $r_k^{i}\in \R^{c_i}$. The terminal and instantaneous quadratic costs for player $i\in \{1,2\}$ are given by 
 	\begin{align}
 		g_{K}^i(x_K, \mathsf{v}_K)  &=\tfrac{1}{2} x_{K}^\prime Q^i_{K}x_{K}+{p_K^{i}}^\prime x_K+ \tfrac{1}{2} \mathsf{v}_K^\prime {D}^{i}_K \mathsf{v}_K\notag\\&\quad
 		+x_K^\prime {L}^{i}_K \mathsf{v}_K+{d_K^{i}}^\prime\mathsf{v}_K,\label{eq:LQcostterm}\\
	g_k^i(x_k, \mathsf{u}_k, \mathsf{v}_k)  
 		&=\tfrac{1}{2} x_k^\prime Q^i_k x_k+ {p_k^{i}}^\prime x_k+\tfrac{1}{2} \sum_{j=1}^{2}{u_k^{j}}^\prime R^{i j}_k u_k^j
 		+\tfrac{1}{2} \mathsf{v}_k^\prime {D}^{i}_k \mathsf{v}_k\notag\\&\quad+ x_k^\prime {L}^{i}_k \mathsf{v}_k+{d_k^{i}}^\prime\mathsf{v}_k ,\label{eq:LQcost}
 	\end{align}
 \label{eq:LQCNZDG}%
 \end{subequations}
 where $R^{i j}_{k}\in \R^{m_i \times m_j}$ for  $k \in \K_l$, and $Q^i_{k}\in \R^{n \times n}$, $Q^i_{k} =Q^{i'}_{k}$, $p^i_{k}\in \R^{n}$, $D^i_{k}\in \R^{s \times s}$,  $L^i_{k}\in \R^{n \times s}$, $d^i_{k}\in \R^{s}$ for $k \in \K$.  Similar to Assumption \ref{ass:A1}, we have the following assumptions for CLQDG.   
 \begin{assumption}\label{ass:CLQDG}
 	\begin{enumerate}[label = (\roman*)]
 		\item \label{item:1} The joint admissible action sets $\U_k(x_k) \subset \R^{m}$ for $k \in \K_l$ are such that the joint action sets $ 
 		\mathsf V(x_k)$, defined in \eqref{eq:Vadmissible}, are nonempty and bounded for all 
 		for all $k \in \K$. 
 		\item \label{item:2} The matrices $\{[N_k^i]_i,~k \in \K,~i=1,2\}$ have full rank.
 		\item \label{item:3} The matrix $ {D}_k+ {D}_k^{\prime}$ is positive definite for all $k\in\K$, where ${D}_k=\begin{bmatrix}
 			[ {D}^{1}_k]_{11}          &[ {D}^{1}_k]_{12} \\
 			[ {D}^{2}_k]_{21}          &[ {D}^{2}_k]_{22}
 		\end{bmatrix}$.
 	\end{enumerate} 
 \end{assumption}
Assumption \ref{ass:CLQDG}.\ref{item:2} ensures that the constraint qualification conditions hold. Assumption \ref{ass:CLQDG}.\ref{item:3} provides a sufficient condition for the cost functions \eqref{eq:LQcostterm}–\eqref{eq:LQcost} to be strictly diagonally convex in the decision variables \(\mathsf{v}_k\) at each stage \(k \in \mathsf{K}\); see also Assumption \ref{ass:convex} for CNZDG. 
\begin{remark} \label{rem:diffCLQDG}
The game model (CLQDG) analyzed in this section is similar to that in \cite{Reddy:15,Reddy:17} but differs in strategic interactions, information structure, and the solution concept. While \cite{Reddy:15,Reddy:17} examine conditions for the existence of open-loop and feedback Nash equilibria in CLQDG, this paper focuses on the existence conditions for a feedback Stackelberg-Nash solution under quasi-hierarchical interactions.  
\end{remark}
Similar to pNZDG, as defined in the previous section, we use \eqref{eq:LQCNZDG} to define the following parametric unconstrained LQ difference game (pLQDG) associated with CLQDG.  
\begin{subequations}\label{eq:PLQ}
	\begin{align}
     &\mathrm{pLQDG}:
		~ \min_{\tilde{\bu}^i}\Bigl\{\bar{J}_i(x_0,\tilde{\bu}^1,\tilde{\bu}^2;\{(\bw_{\tau}, {\uptheta}_{\tau}) \}_{\tau \in \K}) =  	g_{K}^i(x_K, \mathsf{w}_K)\notag\\
		& +\sum_{k\in\K_l}	g_k^i(x_k, \mathsf{u}_k, \mathsf{w}_k) -\sum_{k\in\K}{\theta_k^{i}}^{\prime}(M_k^ix_k+N_k^i\bw_k+r_k^i)\Bigr\},\label{eq:PLQobjective}\\ 
		&\text{subject to} ~x_{k+1} = A_k x_k+B^1_k u_k^1+B^2_k u_k^2,~ k \in \K_l.\label{eq:PLQstate}
	\end{align}
\end{subequations}

For the pLQDG, due to linearity of the state dynamics and quadratic nature of the cost functions, we have the following assumption on pFS solutions and value functions:
\begin{assumption}\label{ass:pFSstrategyandvalue}
	The pFS decisions of the players $\{\xi_k^{1\star},\xi_k^{2\star}\}$ are  affine functions of the state variable
	\begin{align}
		u_k^{i\star}\equiv\xi_k^{i\star}(x_k;\{(\bw_{\tau}, \boldsymbol{\uptheta}_{\tau})\}_{\tau=k+1}^K)=E_k^ix_k+F_k^i,~ i\in \{1,2\},\label{eq:AffineFB_law}
	\end{align}
	where, $E_k^i \in \R^{m_i \times n}$, $F_k^i \in \R^{m_i}$, $k \in \K_l$. Further, the parametric value function for player $i\in \{1,2\}$ at time $k\in \mathsf K$ has the following form:
	\begin{align}
		&W_p^i\big(k, x_k;\{(\bw_{\tau}, \boldsymbol{\uptheta}_{\tau})\}_{\tau=k}^K\big)
		= \tfrac{1}{2}x_k^\prime  S^i_k x_k+{s_k^{i}}^\prime  x_k+m_k^i\notag\\&\quad
		+\sum_{\tau=k}^{K}\Big(\tfrac{1}{2}\begin{bmatrix}
			\bw_\tau\\
			\bm{\uptheta}_\tau
		\end{bmatrix}^\prime \begin{bmatrix}
			D^i_\tau & -{N^{i}_\tau}^\prime \\
			N^{i}_\tau & 0
		\end{bmatrix}\begin{bmatrix}
			\bw_\tau\\
			\bm{\uptheta}_\tau                               \end{bmatrix}
		+{d_\tau^{i}}^\prime \bw_\tau-{r_\tau^{i}}^\prime \theta_\tau^i\Big), \label{eq:PValue}
	\end{align}
	where $S_k^i \in \R^{n \times n}, s_k^i \in \R^n$ and $m_k^i \in \R$.
\end{assumption}
Next, using the follower's quadratic parametric value function \eqref{eq:PValue} at time \( k+1 \), along with the quadratic objective \eqref{eq:LQcost} and linear state dynamics \eqref{eq:LQstate}, the follower’s objective at time \( k \in \K_l \) is written as follows:
\begin{align*}
    &g_k^2(x_k, (u_k^1, u_k^{2}), \bw_k)-{\theta_k^{2}}^\prime (M_k^2x_{k}+N_k^2\bw_{k}+r_k^2)\\&\quad+W_p^2\big(k+1, A_k x_k+B^1_k u_k^1+B^2_k u_k^{2};\{(\bw_{\tau}, \bm{\uptheta}_{\tau})\}_{\tau=k+1}^{K}\big)\nonumber\\
   &=
    \tfrac{1}{2} x_k^\prime Q^2_k x_k+ {p_k^{2}}^\prime x_k+\tfrac{1}{2} \sum_{j=1}^{2}{u_k^{j}}^\prime R^{2 j}_k u_k^j
 		+\tfrac{1}{2} \mathsf{w}_k^\prime {D}^{2}_k \mathsf{w}_k+ x_k^\prime {L}^{2}_k \mathsf{w}_k\\
        &+{d_k^{2}}^\prime\mathsf{w}_k-{\theta_k^{2}}^\prime (M_k^2x_{k}+N_k^2\bw_{k}+r_k^2)+\tfrac{1}{2}(A_k x_k+B^1_k u_k^1+B^2_k u_k^{2})^{'}S^2_{k+1} \nonumber\\
        &\times(A_k x_k+B^1_k u_k^1+B^2_k u_k^{2})+s_{k+1}^{2'}(A_k x_k+B^1_k u_k^1+B^2_k u_k^{2})+m_k^2\nonumber\\
        &+\sum_{\tau=k+1}^{K}\Big(\tfrac{1}{2}\begin{bmatrix}
			\bw_\tau\\
			\bm{\uptheta}_\tau
		\end{bmatrix}^\prime \begin{bmatrix}
			D^2_\tau & -{N^{2}_\tau}^\prime \\
			N^{2}_\tau & 0
		\end{bmatrix}\begin{bmatrix}
			\bw_\tau\\
			\bm{\uptheta}_\tau                               \end{bmatrix}
		+{d_\tau^{2}}^\prime \bw_\tau-{r_\tau^{2}}^\prime \theta_\tau^2\Big).
\end{align*}
 The first-order associated with the follower's optimal response \( u_k^{2\star} \in \bar{\mathsf{R}}_k^2(u_k^1) \) (see \eqref{eq:ParametricRR}) results in the following equation:
\begin{align*}
    &(R^{22}_k+B^{2'}_k S^2_{k+1}B^2_k)u_k^{2\star}+B^{2'}_k S^2_{k+1}(A_k x_k+B^1_k u_k^1)+B^{2'}_k s^2_{k+1}=0.
\end{align*}
If the matrix \( R^{22}_k + {B^{2}_k}^\prime S^2_{k+1} B^2_k \) is positive definite, the follower's objective is strictly convex in \( u_k^2 \) for any given leader action \( u_k^1 \), ensuring a unique optimal response at time \( k \):   
\begin{align}
\bar{\mathsf R}_k^2(u_k^1)&=- \mathrm{\Upsilon}^2_k({B^{2}_k}^\prime  S^2_{k+1}(A_k x_k+B^1_k u_k^1)
	+{B^2_k}^\prime s^2_{k+1}),\label{eq:followerpFS}
\end{align}
 where $\mathrm{\Upsilon}^2_k:=(R^{22}_k+B^{2'}_k S^2_{k+1}B^2_k)^{-1} $. 
 The state equation using follower's (unique) best response from \eqref{eq:followerpFS} can be written as:
\begin{align}
     x_{k+1}&= \Delta_k(A_k x_k+B^1_k u_k^1)-B^2_k \mathrm{\Upsilon}^2_kB^{2\prime}_ks^2_{k+1}. \label{eq:StateRR}
\end{align}
where $\Delta_k:=(I-B^2_k \mathrm{\Upsilon}^2_kB^{2\prime}_kS^2_{k+1})$. 
Using the quadratic parametric value function \eqref{eq:PValue} of the leader at time instant \( k+1 \), along with the quadratic objective \eqref{eq:LQcost}, state dynamics \eqref{eq:StateRR}, and the follower’s unique best response from \eqref{eq:followerpFS}, the leader’s objective at time \( k \in \K_l \) (see \eqref{eq:WPleaderrecursive}) is written as:   
\begin{align}
&g_k^1(x_k, (u_k^1, \bar{\mathsf{R}}_k^2(u_k^1)), \bw_k)-{\theta_k^{1}}^\prime (M_k^1x_{k}+N_k^1\bw_{k}+r_k^1)\notag\\&+W_p^1\big(k+1, x_{k+1};\{(\bw_{\tau}, \bm{\uptheta}_{\tau})\}_{\tau=k+1}^{K}\big)\notag \\
    &=\tfrac{1}{2} x_k^\prime Q^1_k x_k+ {p_k^{1}}^\prime x_k+\tfrac{1}{2} {u_k^{1}}^\prime R^{11}_k u_k^1\nonumber\\
    &+\tfrac{1}{2} ({B^{2}_k}^\prime  S^2_{k+1}(A_k x_k+B^1_k u_k^1)
	+{B^2_k}^\prime s^2_{k+1})^{\prime}{\mathrm{\Upsilon}^2_k}^\prime R^{12}_k \mathrm{\Upsilon}^2_k\nonumber\\
    &\times({B^{2}_k}^\prime  S^2_{k+1}(A_k x_k+B^1_k u_k^1)
	+{B^2_k}^\prime s^2_{k+1})+\tfrac{1}{2} \mathsf{w}_k^\prime {D}^{1}_k \mathsf{w}_k+ x_k^\prime {L}^{1}_k \mathsf{w}_k\nonumber\\
 		&+{d_k^{1}}^\prime\mathsf{w}_k-{\theta_k^{1}}^\prime (M_k^1x_{k}+N_k^1\bw_{k}+r_k^1)+\tfrac{1}{2}\big(\Delta_k(A_k x_k+B^1_k u_k^1)\nonumber\\
        &-B^2_k \mathrm{\Upsilon}^2_kB^{2\prime}_ks^2_{k+1}\big)^{'}S^1_{k+1}\big(\Delta_k(A_k x_k+B^1_k u_k^1)-B^2_k \mathrm{\Upsilon}^2_kB^{2\prime}_ks^2_{k+1}\big)\nonumber\\
        &+s_{k+1}^{1'}\big(\Delta_k(A_k x_k+B^1_k u_k^1)-B^2_k \mathrm{\Upsilon}^2_kB^{2\prime}_ks^2_{k+1}\big)+m_k^1\nonumber \\
    &+\sum_{\tau=k+1}^{K}\Big(\tfrac{1}{2}\begin{bmatrix}
			\bw_\tau\\
			\bm{\uptheta}_\tau
		\end{bmatrix}^\prime \begin{bmatrix}
			D^1_\tau & -{N^{1}_\tau}^\prime \\
			N^{1}_\tau & 0
		\end{bmatrix}\begin{bmatrix}
			\bw_\tau\\
			\bm{\uptheta}_\tau                               \end{bmatrix}
		+{d_\tau^{1}}^\prime \bw_\tau-{r_\tau^{1}}^\prime \theta_\tau^1\Big).\nonumber
\end{align}
 The first-order condition of the leader's optimization problem (see \eqref{eq:WPleaderrecursive}) results in the following equation: 
\begin{align}
	&\big(R^{11}_k+{B^{1}_k}^\prime \big({S^{2}_{k+1}}^\prime  B^{2}_k{\mathrm{\Upsilon}_k^2}^\prime R^{12}_k \mathrm{\Upsilon}_k^2 {B^{2}_k}^\prime  S^2_{k+1}+\Delta_k^\prime S^1_{k+1}\Delta_k\big) B^1_k\big)  u_k^{1\star}\notag\\
 &+{B^{1}_k}^\prime  ({S^{2}_{k+1}}^\prime  B^{2}_k{\mathrm{\Upsilon}_k^2}^{\prime } R^{12}_k \mathrm{\Upsilon}_k^2  {B^{2}_k}^\prime S^2_{k+1}+\Delta_k^\prime S^1_{k+1}\Delta_k)A_k x_k\notag\\
	&+{B^{1}_k}^\prime ({S^{2}_{k+1}}^\prime  B^{2}_k{\mathrm{\Upsilon}_k^2}^{\prime } R^{12}_k \mathrm{\Upsilon}_k^2 {B^{2}_k}^\prime -\Delta_k^\prime  S^1_{k+1}B^2_k\mathrm{\Upsilon}_k^2 {B^{2}_k}^\prime ) s^2_{k+1}\notag\\
	&+{B^{1}_k}^\prime \Delta_k^\prime  s^1_{k+1}=0.\label{eq:LQP1prob1}
\end{align}
If the coefficient of $u_k^{1\star}$ in \eqref{eq:LQP1prob1} is positive definite then the leader's
objective is strictly convex in $u_k^1$ resulting in a unique pFS strategy for leader at time $k$. Next, using \eqref{eq:AffineFB_law} from Assumption \ref{ass:pFSstrategyandvalue},  the follower's pFS strategy is obtained from \eqref{eq:followerpFS} as
$E_k^2x_k+F_k^2= -\mathrm{\Upsilon}^2_k ({B^{2}_k}^\prime  S^2_{k+1}(A_k x_k+B^1_k E_k^1 x_k +B_k^1 F_k^1) +{B^2_k}^\prime s^2_{k+1})$.
The leader's pFS strategy is obtained by substituting $u_k^{1*}=E_k^1x_k+F_k^1$ in \eqref{eq:LQP1prob1} and equating the coefficients of $x_k$ on both sides, as the relation has to hold true for an arbitrary $x_k$. The pFS strategies of the players are solved as 
\begin{subequations}\label{eq:EFeq}
	\begin{align}
	&E_k^1
	=\Delta_k^\prime S^1_{k+1}\Delta_k)A_k-\mathrm{\Upsilon}^1_k{B^{1}_k}^\prime ({S^{2}_{k+1}}^\prime B^{2}_k{\mathrm{\Upsilon}^2_k}^{\prime} R^{12}_k \mathrm{\Upsilon}_k^2 {B^{2}_k}^\prime S^2_{k+1},\label{eq:E1explicit}\\
	&F_k^1
	=-\mathrm{\Upsilon}^1_k{B^{1}_k}^\prime(\Delta_k^\prime s^1_{k+1}+({S^{2}_{k+1}}^\prime B^{2}_k{\mathrm{\Upsilon}^2_k}^{\prime} R^{12}_k \mathrm{\Upsilon}_k^2 {B^{2}_k}^\prime\notag \\
	&\hspace{1.5in}-\Delta_k^\prime S^1_{k+1}B^2_k\mathrm{\Upsilon}^2_k{B^{2}_k}^\prime) s^2_{k+1}),\label{eq:F1explicit}\\
	&E_k^2=-\mathrm{\Upsilon}^2_kB^{2'}_k S^2_{k+1}(A_k+B^1_k E_k^1),\label{eq:E2explicit}\\
	&F_k^2=-\mathrm{\Upsilon}^2_kB^{2'}_k( s^2_{k+1}+S^2_{k+1}B^1_k F_k^1),\label{eq:F2explicit} \\
	&\mathrm{\Upsilon}^2_k=(R^{22}_k+B^{2'}_k S^2_{k+1}B^2_k)^{-1},\label{eq:FollowerRRmatices}\\
	&\mathrm{\Upsilon}^1_k= (R^{11}_k+{B^{1}_k}^\prime({S^{2}_{k+1}}^\prime B^{2}_k{\mathrm{\Upsilon}^2_k}^{\prime} R^{12}_k \mathrm{\Upsilon}_k^2 {B^{2}_k}^\prime S^2_{k+1}+\Delta_k^\prime  S^1_{k+1}\Delta_k)B^1_k)^{-1}.
	\label{eq:LeaderRRmatices}
\end{align} 
\end{subequations}
The above steps are summarized in the following theorem.
\begin{theorem}
	Consider the pLQDG described by \eqref{eq:PLQ} with parameters $\{(\bw_{\tau}, \boldsymbol{\uptheta}_{\tau})\}_{\tau=0}^K$, and let Assumption \ref{ass:pFSstrategyandvalue} hold. Define $S_k^i$, $s_k^i$ and $m_k^i$ such that the following backward recurrence
	equations are verified for $i\in \{1,2\}$ 
\begin{subequations}\label{eq:PlayerSsmEq}
	\begin{align}
		S^i_{k} &= Q^i_k+\sum_{j=1}^{2}{E_k^{j}}^\prime R^{ij}_k E_k^j+(A_k+\sum_{j=1}^{2}B^j_k E_k^j)^\prime S^i_{k+1}(A_k+\sum_{j=1}^{2}B^j_k E_k^j),\label{eq:Si}
		\end{align} \begin{align} 
		s^i_{k} &= p_k^{i}+L^{i}_k \bw_k-{M_k^i}^\prime\theta_k^{i}+\sum_{j=1}^{2}{E_k^{j}}^\prime R^{ij}_k F_k^j+(A_k+\sum_{j=1}^{2}B^j_k E_k^j)^\prime\notag\\
        &\qquad \times(s_{k+1}^{i}+S^i_{k+1}\sum_{j=1}^{2}B^j_k F_k^j),\label{eq:si} \\
		m_{k}^i &= m_{k+1}^i+\tfrac{1}{2}\sum_{j=1}^{2}{F_k^{j}}^\prime R^{ij}_k F_k^j+\tfrac{1}{2}(\sum_{j=1}^{2}B^j_k F_k^j)^\prime S^i_{k+1}(\sum_{j=1}^{2}B^j_k F_k^j)\notag\\
        &\qquad +(\sum_{j=1}^{2}B^j_k F_k^j)^\prime s^i_{k+1}.\label{eq:mi}
	\end{align}
\end{subequations}
with terminal conditions  $
	S^i_{K} = Q^i_K$,  $s^i_{K} = p_K^{i}+L^{i}_K \bw_K-{M_K^{i}}^\prime \theta_K^{i}$, and  $m_{K}^i = 0$. If the matrices $\mathrm{\Upsilon}^1_k$ and $\mathrm{\Upsilon}^2_k$ defined in \eqref{eq:FollowerRRmatices} and \eqref{eq:LeaderRRmatices} are positive definite, then
	$\xi_k^{i\star}(x_k)=E_k^ix_k+F_k^i$ is a pFS solution for pLQDG where $E_k^i$ and $F_k^i$ are given by  \eqref{eq:E1explicit}-\eqref{eq:F2explicit} for $i\in \{1,2\}$,~$k\in \K_l$. The pFS state trajectory is given by
	\begin{align}\label{eq:pFSstatetraj}
		x_{k+1}=(A_k+\sum_{i=1}^2 B_k^i E_k^i)  x_k + \sum_{i=1}^2 B_k^i F_k^i.
	\end{align}
\end{theorem}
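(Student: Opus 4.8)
The plan is to verify the quadratic--affine ansatz of Assumption \ref{ass:pFSstrategyandvalue} by backward induction on $k$, using Lemma \ref{lem:VerificationTh2} as the verification principle. Concretely, it suffices to show that if $W_p^i(k+1,\cdot)$ has the quadratic form \eqref{eq:PValue}, then executing the recursive relations \eqref{eq:ParametricRR}--\eqref{eq:pfollower} produces a value function $W_p^i(k,\cdot)$ of the same form with coefficients advanced by the backward recurrences \eqref{eq:PlayerSsmEq}, and that positive-definiteness of $\Upsilon_k^1$ and $\Upsilon_k^2$ supplies the strict convexity that Lemma \ref{lem:VerificationTh2} requires for a singleton follower response and a well-defined leader strategy.

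First I would dispatch the base case at the terminal instant $K$. Comparing the terminal value \eqref{eq:PWatK} term-by-term with \eqref{eq:PValue} evaluated at $k=K$ forces $S_K^i=Q_K^i$, $s_K^i=p_K^{i}+L_K^i\bw_K-{M_K^i}^\prime\theta_K^i$, and $m_K^i=0$, which are exactly the stated terminal conditions; the parameter-dependent summation block of \eqref{eq:PValue} at $\tau=K$ coincides with the quadratic-in-$\bw_K$ and the $-{\theta_K^i}^\prime(N_K^i\bw_K+r_K^i)$ contributions of \eqref{eq:PWatK}.

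For the inductive step I would substitute the assumed quadratic $W_p^2(k+1,\cdot)$ into the follower's problem \eqref{eq:ParametricRR}. The follower's objective is quadratic in $u_k^2$ with Hessian $R_k^{22}+{B_k^2}^\prime S_{k+1}^2 B_k^2$; positive-definiteness of this matrix, i.e.\ of $\Upsilon_k^2$ in \eqref{eq:FollowerRRmatices}, renders it strictly convex, so its first-order condition yields the unique affine best response \eqref{eq:followerpFS}, making $\bar{\mathsf R}_k^2(\cdot)$ the singleton required by condition \ref{itm:requirementp} of Definition \ref{def:pFS}. Feeding this response into the dynamics gives the closed-loop update \eqref{eq:StateRR}, which I then insert into the leader's objective together with $W_p^1(k+1,\cdot)$; positive-definiteness of $\Upsilon_k^1$ in \eqref{eq:LeaderRRmatices} makes the leader's objective strictly convex in $u_k^1$, so the first-order condition \eqref{eq:LQP1prob1} has a unique solution. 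Since $\bar{\mathsf X}_k=\mathbb R^n$, matching the coefficients of $x_k$ and the constant parts in these conditions against the affine form \eqref{eq:AffineFB_law} yields the closed forms \eqref{eq:E1explicit}--\eqref{eq:F2explicit} for $E_k^i$ and $F_k^i$.

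The crux, and the step I expect to be most laborious, is confirming that re-substituting the optimal affine strategies $u_k^{i\star}=E_k^i x_k+F_k^i$ into the stage cost plus $W_p^i(k+1,\cdot)$ reproduces the quadratic form \eqref{eq:PValue} at time $k$ with coefficients obeying \eqref{eq:Si}--\eqref{eq:mi}. This amounts to expanding the closed-loop operator $A_k+\sum_{j=1}^{2}B_k^j E_k^j$ and the offset $\sum_{j=1}^{2}B_k^j F_k^j$, then separating the result into its pure quadratic-in-$x_k$ part (which must equal $\tfrac12 x_k^\prime S_k^i x_k$, giving \eqref{eq:Si}), its linear-in-$x_k$ part (which carries the parametric contributions $L_k^i\bw_k$ and $-{M_k^i}^\prime\theta_k^i$, giving \eqref{eq:si}), and its constant part (giving \eqref{eq:mi}), while checking that the parameter-dependent summation block of \eqref{eq:PValue} merely acquires its $\tau=k$ summand. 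Verifying this identification for both $i=1$ and $i=2$ closes the induction, so Lemma \ref{lem:VerificationTh2} certifies that $\xi_k^{i\star}(x_k)=E_k^i x_k+F_k^i$ is a pFS solution, and substituting these strategies into \eqref{eq:PLQstate} yields the trajectory \eqref{eq:pFSstatetraj}.
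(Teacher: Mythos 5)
Your proposal is correct and follows essentially the same route as the paper: both verify the quadratic--affine ansatz of Assumption \ref{ass:pFSstrategyandvalue} through the recursive conditions of Lemma \ref{lem:VerificationTh2}, derive the follower's and leader's unique responses from first-order conditions whose strict convexity is supplied by positive definiteness of $\mathrm{\Upsilon}^2_k$ and $\mathrm{\Upsilon}^1_k$, and obtain \eqref{eq:EFeq} and the Riccati-type recursions \eqref{eq:PlayerSsmEq} by matching coefficients of $x_k$ in \eqref{eq:WPleaderrecursive1}. Your explicit treatment of the terminal base case and the framing as backward induction merely make overt what the paper leaves implicit in its appeal to Lemma \ref{lem:VerificationTh2}.
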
 
 \begin{proof}
    For both leader and follower, under Assumption \ref{ass:pFSstrategyandvalue}, the backward recursive relations \eqref{eq:PlayerSsmEq} follows by comparing the coefficients of state in the verification result \eqref{eq:WPleaderrecursive1} of Lemma \ref{lem:VerificationTh2}.  The positive definiteness of the matrices \(\mathrm{\Upsilon}^2_k\) for all \(k \in \K_l\) ensures a unique optimal response for the follower at each time instant. Similarly, the positive definiteness of \(\mathrm{\Upsilon}^1_k\) for all \(k \in \K_l\) guarantees a unique parametric Stackelberg solution for the leader at every time instant.  
  \end{proof} 
\subsection{FSN solution as a linear complementarity problem}\label{sec:LQFSN} 
In this subsection, we outline an approach to obtain the elements of the set $\mathsf{Q}$, as defined in \eqref{eq:Qset}, by solving a large-scale linear complementarity problem. The procedure follows a sequence of steps similar to those in \cite{Reddy:17,Reddy:19} and involves eliminating the state variables in \eqref{eq:Qset} using \eqref{eq:pFSstatetraj}. To this end, we first introduce some notation.   Denote
  $ \mathsf{p}_{k}:= \col{p^{1}_{k}, p^{2}_{k}}$,   $\mathsf {s}_{k}:=\col{s^{1}_{k}, s^{2}_{k}}$, $\mathsf B_k:=\row{B_k^1,B_k^2}$, $\mathsf{E}_{k}:=\col{E^1_k, E^2_k}$,  $ \mathsf{F}_{k}:=\col{F^1_k, F^2_k}$, $\mathsf{L}_{k}:=\col{L^1_{k}~L^2_{k}}$, $\mathsf{M}_{k}:= M^1_k \oplus M_k^2$, and the gain matrices $G_{k+1}$ and $H_{k+1}$ partitioned as
 \begin{align*}
 	&[ {G}_{k+1}]_{11}:=-\mathrm{\Upsilon}_k^1{B^{1}_k}^\prime\Delta_k^\prime,\quad\\
 	&[ {G}_{k+1}]_{12}:=-\mathrm{\Upsilon}_k^1 {B^{1}_k}^\prime({S^{2}_{k+1}}^\prime B^{2}_k{\mathrm{\Upsilon}^2_k}^{\prime} R^{12}_k \mathrm{\Upsilon}_k^2 {B^{2}_k}^\prime-\Delta_k^\prime S^1_{k+1}B^2_k\mathrm{\Upsilon}_k^2{B^{2}_k}^\prime), 	\\
 	&[ {G}_{k+1}]_{21}:=-\mathrm{\Upsilon}_k^2 {B^2_k}^\prime S^2_{k+1}B^1_k[ {G}_{k+1}]_{11}\\
 	&[ {G}_{k+1}]_{22}:= -\mathrm{\Upsilon}_k^2  {B^2_k}^\prime(I+S^2_{k+1}B^1_k [ {G}_{k+1}]_{12}),\\ 
&[H_{k+1}]_{ii}:=\mathrm{\Upsilon}_k^{ii} [G_{k+1}]_{ii}
 +(A_k+\mathsf{B}_k \mathsf{E}_k)^{'}+\mathrm{\Upsilon}_k^{ij} [G_{k+1}]_{ji},\\
 &[H_{k+1}]_{ij}:=\mathrm{\Upsilon}_k^{ii} [G_{k+1}]_{ij}+\mathrm{\Upsilon}_k^{ij} [G_{k+1}]_{jj},
\end{align*}
where $\mathrm{\Upsilon}_k^{ii}= E_k^{i'}R^{ii}_k+(A_k+\mathsf{B}_k \mathsf{E}_k)^{'}S^i_{k+1}B^i_k $ and
$\mathrm{\Upsilon}_k^{ij}= E_k^{j'}R^{ij}_k+(A_k+\mathsf{B}_k \mathsf{E}_k)^{'}S^i_{k+1}B^j_k $
for $i,j\in \{1,2\},~i\neq j$.
	Using the above, \eqref{eq:F1explicit}-\eqref{eq:F2explicit} and \eqref{eq:si} can be written in vector form as
\begin{align}
	\mathsf{F}_{k}& = {G}_{k+1} \mathsf{s}_{k+1}, \label{eq:Fequation}\\
	 \mathsf{s}_{k}&= \mathsf{p}_{k}+[ \mathsf{L}_{k} ~- \mathsf{M}^\prime_{k}][
	 \mathsf w_k^\prime ~~
	{\uptheta}_{k}^\prime]^\prime+{H}_{k+1} \mathsf{s}_{k+1},\label{eq:sEquation}
\end{align}
with the terminal condition given by $
	 \mathsf{s}_{K}= \mathsf{p}_{K}+[\mathsf{L}_{K} ~- \mathsf{M}^\prime _{K}][
	\bw_{K}^\prime ~~
	{\uptheta}_{K}^\prime ]^\prime$. Next, \eqref{eq:sEquation} can be solved as
\begin{align}
	\mathsf {s}_{k}=\sum_{\tau =k}^{K}\varphi(k, \tau)\big( \mathsf{p}_{\tau}+[ \mathsf{L}_{\tau} ~- \mathsf{M}^\prime _{\tau}][
	\bw_{\tau}^\prime ~~
	{\uptheta}_{\tau}^\prime ]^\prime \big),\label{eq:backwarddiff}
\end{align}
where  the associated state transition matrices $\varphi(k, \tau)$ are given by $\varphi(k, \tau)= I$ for $\tau=k$, and $\varphi(k, \tau)=
	 {H}_{k+1} {H}_{k+2}\cdots {H}_{\tau}$ for $\tau>k$.
Using \eqref{eq:Fequation} in \eqref{eq:pFSstatetraj}, the pFS state variable evolves according the forward linear difference equation:
\begin{align*}
    x_{k+1} = \bar{A}_k x_k+\bar{B}_k \mathsf{s}_{k+1},
\end{align*}
with  $\bar{A}_k:=A_k+\mathsf  B_k \mathsf  E_k$ and $\bar{B}_k:= \mathsf B_k {G}_{k+1}$. The solution of this linear forward difference equation for $k \in \K_r$ is given by
\begin{align}
	x_k=\phi(0,k)x_0+\sum_{\rho=0}^{k-1}\phi(\rho+1,k)\bar{B}_\rho \mathsf{s}_{\rho+1},\label{eq:forwarddiff}
\end{align}
where the associated state transition matrices $\phi(\rho, k)$ are defined as $\phi(\rho, k)= 
	 {I}$ for $\rho=k$, and $\phi(\rho, k)=	\bar{A}_{k-1}\bar{A}_{k-2}\cdots\bar{A}_\rho$ for $\rho<k$. Using \eqref{eq:backwarddiff} in \eqref{eq:forwarddiff}, the pFS state trajectory for $k \in \K_r$ is given as follows
\begin{align}
	x_k 
	&=\phi(0,k)x_0+\sum_{\tau =1}^{K}\Big(\sum_{\rho=1}^{\mathrm{min}\,\,(k,\tau)}\phi(\rho,k)\bar{B}_{\rho-1}\varphi(\rho, \tau)\Big)\notag\\
 &\hspace{1.25in}\times\big( {p}_{\tau}+[ \mathsf{L}_{\tau} ~- \mathsf{M}^\prime _{\tau}][
	\mathsf w_{\tau}^\prime ~~
	{\uptheta}_{\tau}^\prime ]^\prime \big).\label{eq:finalforwarddiff}   
\end{align}
Aggregating the variables in \eqref{eq:finalforwarddiff}  by  $ \mathsf{p}_{\K}:=\col{ \mathsf{p}_{k}}_{k=1}^K,~ x_{\K}:=\col{x_{k}}_{k=1}^K,~\bw_{\K}:=\col{\bw_{k}}_{k=1}^K$ and $ {\uptheta}_{\K}:=\col{ {\uptheta}_{k}}_{k=1}^K$, the pFS state trajectory $x_k$, $k\in \K_r$ is  written compactly as  
\begin{align}
	x_{\K}= {\Phi}_0x_0+ {\Phi}_1 \mathsf{p}_{\K}+ {\Phi}_2\bw_{\K}+ {\Phi}_3{\uptheta}_{\K},\label{eq:aggregateState}
\end{align}
where for all $ k,\tau \in \K_r$ the matrices appearing on the right-hand side of \eqref{eq:aggregateState} are given by
    $[ {\Phi}_0]_{k}:=\phi(0,k)$,  $[ {\Phi}_1]_{k \tau}:=\sum_{\rho=1}^{\mathrm{min}(k,\tau)}\phi(\rho,k)\bar{B}_{\rho-1}\varphi(\rho, \tau)$, $[ {\Phi}_2]_{k \tau}:=\sum_{\rho=1}^{\mathrm{min}(k,\tau)}\phi(\rho,k)\bar{B}_{\rho-1}\varphi(\rho, \tau) \mathsf{L}_{\tau}$,  $[ {\Phi}_3]_{k \tau}:=-\sum_{\rho=1}^{\mathrm{min}\,\,(k,\tau)}\phi(\rho,k)\bar{B}_{\rho-1}\varphi(\rho, \tau) \mathsf{M}^\prime _{\tau}$. Next,  we compute the following terms:
    \begin{align*}
		\nabla_{v_k^i} g^i_{v,k}(x_k, \mathsf{w}_{k})&=\nabla_{v_k^i} g^i_{v,k}(x_k, \bv_k)|_{\bv_k=\mathsf{w}_{k}}\\
        &= \big[[{D}^{i}_k]_{i1} ~[{D}^{i}_k]_{i2}\big] \mathsf{w}_{k}+ [{L}^{i}_k]_i^{\prime}x_k+[d_k^{i}]_{i},\\
		\nabla_{v_k^i} h_k^i(x_k, \mathsf{w}_{k})&=\nabla_{v_k^i} h_k^i(x_k, \bv_k)|_{\bv_k=\mathsf{w}_{k}}=[N_k^i]_i,\quad i=1,2.
	\end{align*}
    Using the above expressions, the terms $\nabla {L}_{k}(x_{k},\mathsf{w}_{k},{\uptheta}_{k})$ and $h_{k}(x_{k},\mathsf{w}_{k})$ are simplified as follows:
    \begin{align}
		&\nabla {L}_{k}(x_{k},\mathsf{w}_{k},{\uptheta}_{k})=\col{\nabla_{v_k^i} g^i_{v,k}(x_k, \mathsf{w}_{k})-{\theta_k^{i}}^{\prime}\nabla_{v_k^i} h_k^i(x_k, \mathsf{w}_{k})}_{i=1}^{2}\nonumber\\
		&=\begin{bmatrix}
			[{D}^{1}_k]_{11}          &[{D}^{1}_k]_{12}  &-{[N_k^1]_1}^{\prime}  &~~\mathbf{0}\\
			[{D}^{2}_k]_{21}          &[{D}^{2}_k]_{22}  &~~\mathbf{0}     &-{[N_k^2]_2}^{\prime}
		\end{bmatrix}\begin{bmatrix}
			\mathsf{w}_{k}\\
			{\uptheta}_{k}
		\end{bmatrix}\nonumber\\
        &\hspace{1.75in}+\begin{bmatrix}
			[{L}^{1}_k]_1^{\prime}\\
			[{L}^{2}_k]_2^{\prime}
		\end{bmatrix}x_k+\begin{bmatrix}
			[d_k^{1}]_{1}\\
			[d_k^{2}]_{2}
		\end{bmatrix}\nonumber\\
		&=\big[
			{D}_k  ~-{([N_k^1]_1\oplus [N_k^2]_2)}^{\prime}
		\big]\begin{bmatrix}
			\mathsf{w}_{k}\\
			{\uptheta}_{k}
		\end{bmatrix}+\big[[{L}^{1}_k]_1~ [{L}^{2}_k]_2\big]^{\prime}x_k+\begin{bmatrix}
			[d_k^{1}]_{1}\\
			[d_k^{2}]_{2}
		\end{bmatrix},\nonumber
        \end{align}
        \begin{align}
        &h_k(x_k, \mathsf{w}_{k})=\col{h_k^i(x_k, \mathsf{w}_{k})}_{i=1}^{2}=\begin{bmatrix}
			M_k^1x_k+N_k^1\mathsf{w}_{k}+r_k^1\\
			M_k^2x_k+N_k^2\mathsf{w}_{k}+r_k^2
		\end{bmatrix}\nonumber\\
		&=\begin{bmatrix}
			\col{N_k^1,N_k^2} &\mathbf{0}
		\end{bmatrix}\begin{bmatrix}
			\mathsf{w}_{k}\\
			{\uptheta}_{k}
		\end{bmatrix}+\begin{bmatrix}
			M_k^1\\
                M_k^2
		\end{bmatrix}x_k+\begin{bmatrix}
			r_k^1\\
                r_k^2
		\end{bmatrix}.\nonumber
	\end{align}    
    Next, aggregating $\nabla {L}_{k}(x_{k},\mathsf{w}_{k},{\uptheta}_{k})$ and $h_{k}(x_{k},\mathsf{w}_{k})$ for all \( k \in \K \), we obtain the following equations:
    \begin{align*}
        &\nabla{L}_\K(x_\K,  \bw_\K,{\uptheta}_\K)=\mathrm{col}(\nabla {L}_{k}(x_{k},\mathsf{w}_{k},{\uptheta}_{k}))_{k=1}^{K}\\
        &\hspace{0.75in}=\begin{bmatrix}
			\mathsf {D}_\K & - {\bar{\mathsf{N}}_\K}^\prime 
		\end{bmatrix}\begin{bmatrix}
			\bw_\K\\
			{\uptheta}_\K
		\end{bmatrix}
		+\begin{bmatrix}
			{ \bar{ \mathsf L}_\K}^\prime 
		\end{bmatrix}x_\K+\begin{bmatrix}
			\mathsf{d}_\K
		\end{bmatrix},\\
        &h_{\mathsf{K}}(x_{\mathsf{K}},\mathsf{w}_{\mathsf{K}})=\mathrm{col}(h_{k}(x_{k},\mathsf{w}_{k}))_{k=1}^{K}\\
         &\hspace{0.75in}=\begin{bmatrix}
			\mathsf {N}_\K & \mathbf{0}
		\end{bmatrix}\begin{bmatrix}
			\bw_\K\\
			{\uptheta}_\K
		\end{bmatrix}
		+\begin{bmatrix}
			{\bar{\mathsf M}}_\K
		\end{bmatrix}x_\K+\begin{bmatrix}
			\mathsf{r}_\K
		\end{bmatrix},
    \end{align*}
    where $\mathsf{D}_\K=\oplus_{k=1}^{K}D_k$, $\bar{\mathsf{N}}_\K=\oplus_{k=1}^{K} ([N_k^1]_1\oplus [N_k^2]_2)$, $\mathsf{N}_\K=\oplus_{k=1}^{K} (\col{N_k^1, N_k^2})$,  ${\bar{\mathsf L}}_\K=\oplus_{k=1}^{K} (\row{[L_k^1]_1,[L_k^2]_2})$, 
${\bar{\mathsf M}}_\K=\oplus_{k=1}^{K} (\col{M_k^1,M_k^2})$,  $\mathsf{d}_\K=\col{ \col{[d_k^1]_1,[d_k^2]_2}}_{k=1}^K$, $\mathsf {r}_\K=\col{\col{r_k^1,r_k^2}}_{k=1}^K$. Finally, using the above expressions, the parametric complementarity problem   \eqref{eq:pCompliment} can be written as the following parametric linear complementarity problem ($\mathrm{pLCP}(x_\K)$):
\begin{align}
	0 \leq\begin{bmatrix}
		\mathsf {D}_\K & - {\bar{\mathsf{N}}_\K}^\prime \\
		\mathsf {N}_\K & \mathbf{0}
	\end{bmatrix}\begin{bmatrix}
		\bw_\K\\
		{\uptheta}_\K
	\end{bmatrix}
	+\begin{bmatrix}
		{ \bar{ \mathsf L}_\K}^\prime \\
		{\bar{\mathsf M}}_\K
	\end{bmatrix}x_\K
	+\begin{bmatrix}
		 \mathsf{d}_\K\\
		 \mathsf{r}_\K
	\end{bmatrix} \perp\begin{bmatrix}
		\bw_\K\\
		{\uptheta}_\K
	\end{bmatrix} \geq 0,\label{eq:LQpLCP}  
\end{align}
   Using \eqref{eq:aggregateState} in \eqref{eq:LQpLCP}, we obtain the following single linear complementary problem:
\begin{align}\label{eq:LQMmap}
	\textrm{LCP}: 0 \leq {\mathbf{M}}_{\K}\begin{bmatrix}
		\bw_\K\\
		{\uptheta}_\K
	\end{bmatrix}+ {\mathbf{q}}_{\K} \perp\begin{bmatrix}
		\bw_\K\\
		{\uptheta}_\K
	\end{bmatrix} \geq 0,
\end{align}
where $
	 {\mathbf{M}}_{\K}:=\begin{bmatrix}
		 \mathsf{D}_\K+ {\bar{\mathsf L}_\K}^\prime  {\Phi}_2 & - {\bar{\mathsf{N}}_\K}^\prime+ {\bar{\mathsf L}_\K}^\prime  {\Phi}_3\\
		 \mathsf{N}_\K+ { \bar{\mathsf M}}_\K {\Phi}_2 &  { \bar{\mathsf M}}_\K {\Phi}_3
	\end{bmatrix}$ and
	 ${\mathbf{q}}_{\K}:=\begin{bmatrix}
		\mathsf {d}_\K+{\bar{\mathsf L}_\K}^\prime  {\Phi}_1 \mathsf{p}_\K+ {\bar{\mathsf L}_\K}^\prime {\Phi}_0x_0\\
		 \mathsf{r}_\K+ { \bar{\mathsf M}}_\K {\Phi}_1 \mathsf{p}_\K+ {\bar{\mathsf M}}_\K {\Phi}_0x_0
	\end{bmatrix}$.
\begin{theorem} \label{th:LCP}
  Consider the CLQDG described by \eqref{eq:CLQDG}. Let Assumptions \ref{ass:CLQDG} and \ref{ass:pFSstrategyandvalue} hold true and the matrices $\mathrm{\Upsilon}^1_k$ and $\mathrm{\Upsilon}^2_k$ defined in \eqref{eq:FollowerRRmatices} and \eqref{eq:LeaderRRmatices} are positive definite for all $k\in\K_l$. If $\texttt{SOL}(\mathrm{pLCP}_0(x_0)\neq \emptyset$ and $\texttt{SOL}(\mathrm{LCP})\neq \emptyset$, then the FSN strategies of the players in CLQDG are given by
  \begin{align*}
      \{\psi^{i\star}\equiv (\{(\gamma_k^{i\star}(x_k),v_k^{i\star})\}_{k \in \K_l},v_K^{i\star}),~ i=1,2\},
  \end{align*}
  where the simultaneous decisions are $(\bv_0^{\star}, \bm{\upmu}_0^{\star})=\texttt{SOL}(\mathrm{pLCP}(x_0))$,  $(\bv_{\K}^{\star}, \bm{\upmu}_{\K}^{\star})=\texttt{SOL}(\mathrm{LCP})$ and the sequential decisions are $\gamma_k^{i\star}(x_k) = E_k^ix_k+F_k^i,\, i \in\{1,2\}$, where $E_k^i$ and $F_k^i,\, i \in \{1,2\}$ are given by \eqref{eq:EFeq} with parameters as $(\bv_{\K}^{\star}, \bm{\upmu}_{\K}^{\star})$. 
\end{theorem}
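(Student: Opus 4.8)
The plan is to assemble this result from the explicit linear-quadratic computations carried out just above together with the constructive characterization of the FSN solution set in Proposition \ref{prop:fixedpoint}. The key observation is that, in the LQ setting, every ingredient of the fixed-point set $\mathsf{Q}$ in \eqref{eq:Qset} becomes explicit and affine, so the implicit complementarity conditions collapse to a single finite-dimensional LCP. Most of the heavy lifting is already done in the preceding derivation of the gains \eqref{eq:EFeq}, the closed-form state trajectory \eqref{eq:aggregateState}, and the reduction to \eqref{eq:LQMmap}; the proof principally needs to connect these pieces.

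First I would invoke the preceding theorem on pLQDG: under the positive definiteness of $\mathrm{\Upsilon}^1_k$ and $\mathrm{\Upsilon}^2_k$ for all $k \in \K_l$, the associated unconstrained parametric game admits a pFS solution of the affine form $\xi_k^{i\star}(x_k) = E_k^i x_k + F_k^i$ with gains given by \eqref{eq:EFeq}, and the parametric value functions are quadratic in the state as in \eqref{eq:PValue}. This verifies the structural hypotheses of Assumption \ref{ass:pFSstrategyandvalue} and supplies the coefficients $S_k^i, s_k^i, m_k^i$ through the backward recursion \eqref{eq:PlayerSsmEq}, so that Theorem \ref{thm:equivtheorem} and Proposition \ref{prop:fixedpoint} apply.

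Next I would specialize Proposition \ref{prop:fixedpoint} to the LQ case. The set $\mathsf{Q}$ consists of those parameter tuples $(\bw_\K, {\uptheta}_\K)$ that are simultaneously fixed under the state recursion driven by the pFS strategies and solutions of the stage-wise problems $\mathrm{pCP}_k(x_k)$. For affine pFS strategies the forward state recursion \eqref{eq:pFSstatetraj} solves in closed form, giving the explicit affine dependence $x_\K = \Phi_0 x_0 + \Phi_1 \mathsf{p}_\K + \Phi_2 \bw_\K + \Phi_3 {\uptheta}_\K$ of \eqref{eq:aggregateState}. Because the stage costs $g_{v,k}^i$ are quadratic and the constraints \eqref{eq:LQconstraints} are affine, the gradient map $\nabla L_k$ and the constraint map $h_k$ are affine in $(x_k, \bw_k, {\uptheta}_k)$; aggregating over $k \in \K$ therefore turns $\mathrm{pCP}(x_\K)$ in \eqref{eq:pCompliment} into the parametric linear complementarity problem $\mathrm{pLCP}(x_\K)$ of \eqref{eq:LQpLCP}.

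The decisive step is then to eliminate the state. Substituting \eqref{eq:aggregateState} into \eqref{eq:LQpLCP} removes $x_\K$ and yields the single LCP \eqref{eq:LQMmap} with data $\mathbf{M}_\K$ and $\mathbf{q}_\K$; by construction $(\bw_\K, {\uptheta}_\K) \in \texttt{SOL}(\mathrm{LCP})$ precisely when the pair satisfies both the self-consistent state relation and the complementarity conditions defining $\mathsf{Q}$. Hence $\texttt{SOL}(\mathrm{LCP})$ realizes $\mathsf{Q}$, and $\texttt{SOL}(\mathrm{LCP}) \neq \emptyset$ guarantees $\mathsf{Q} \neq \emptyset$. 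Setting $(\bv_\K^\star, \bm{\upmu}_\K^\star) = \texttt{SOL}(\mathrm{LCP})$ and using $\texttt{SOL}(\mathrm{pLCP}_0(x_0)) \neq \emptyset$ to supply $(\bv_0^\star, \bm{\upmu}_0^\star)$, Proposition \ref{prop:fixedpoint} then delivers the FSN strategies in the stated form, with sequential gains $E_k^i, F_k^i$ evaluated at these parameters. I expect the main obstacle to lie in establishing the exact equivalence $\mathsf{Q} = \texttt{SOL}(\mathrm{LCP})$ in both directions — in particular, verifying that the state entering the aggregated complementarity conditions is consistently the one generated by the pFS feedback under the same parameters, so that no spurious LCP solutions are introduced and no element of $\mathsf{Q}$ is lost.
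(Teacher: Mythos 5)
Your proposal is correct and follows essentially the same route as the paper: the paper's own proof is a one-line appeal to Proposition \ref{prop:fixedpoint} together with ``the steps before the theorem,'' which are exactly the ingredients you assemble — the affine pFS gains \eqref{eq:EFeq}, the closed-form state trajectory \eqref{eq:aggregateState}, the reduction of \eqref{eq:pCompliment} to \eqref{eq:LQpLCP}, and the elimination of $x_\K$ to obtain \eqref{eq:LQMmap}, whose solutions realize the set $\mathsf{Q}$. Your added care about the two-way correspondence $\mathsf{Q} = \texttt{SOL}(\mathrm{LCP})$ is a point the paper leaves implicit, but it holds by construction since the state elimination is an exact affine substitution, so no gap separates your argument from the paper's.
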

	\begin{proof}
 	The proof follows from Proposition \ref{prop:fixedpoint} and the steps before the theorem.
	\end{proof}
\begin{remark} 
    The LCP in \eqref{eq:LQMmap} may have no solution, a unique one, multiple, or even a continuum of equilibria. When multiple solutions exist, each yields an FSN strategy in which the leader enforces her sequential decision locally at each stage. As noted in Remark~\ref{rem:multiplicity}, these   solutions can be ordered by the leader’s global cost; see also Remark~\ref{rem:FSNodering}. Existence, uniqueness, and numerical methods for solving LCPs are well-studied in the optimization literature; see \cite{Cottle:09} for details. 
 \end{remark}
 \begin{remark}\label{rem:LQVerification}
The conditions in Theorem \ref{th:LCP} can be verified a priori using the problem data. In particular, constraints \eqref{eq:LQconstraints} are affine, and the non-emptiness of \(\V(x_k)\) in Assumption \ref{ass:CLQDG}.\ref{item:1} can be checked as a convex feasibility problem. Furthermore, Assumptions \ref{ass:CLQDG}.\ref{item:2}–\ref{item:3} follow directly from the problem data. Similarly, the positive definiteness of the matrices \(\mathrm{\Upsilon}^1_k\) and \(\mathrm{\Upsilon}^2_k\), defined in \eqref{eq:FollowerRRmatices} and \eqref{eq:LeaderRRmatices}, can be verified for all \( k \in \mathsf{K}_l \) using the problem data.  
 \end{remark}
 
\section{Numerical Illustration}\label{sec:Numerical}
       In this section, we illustrate our results with a numerical example. The setting is the same as in \cite{Reddy:17}, i.e., two firms compete in the same market in quantities (i.e., \`{a} la Cournot) and invest in process R$\&$D to reduce their unit production costs, and in their production capacity. The difference with \cite{Reddy:17} is that here the firms announce sequentially, and not simultaneously, their investments. Following the motivating example in the introduction, we suppose that player 1 is an international company that acts as leader in the investment variables and player 2 is a local firm that acts as follower. Denote by $v_{k}^{i}$ the quantity produced (output) by firm $i\in \{1,2\}$ at time $k$. The price $P(v_{k}^{1},v_{k}^{2})$ is given by the following affine inverse demand:  
\begin{align*}
		P(v_k^1,v_k^2)=\bar{A}_k-\bar{B}_k(v_k^1+v_k^2), \quad k \in \K,
\end{align*}
  where, $\bar{A}_k$ and $\bar{B}_k$ represent, respectively, the maximum willingness to pay and the price sensitivity to the total quantity at time instant $k$. We assume that the demand increases over time, modeled by $\bar{A}_{k}=\bar{A}_{k-1}(1+\epsilon )$ and $\bar{B}_{k}=\bar{B}_{k-1}/(1+\epsilon)$,  for all $k\in \mathsf{K}_{r}$, with $\epsilon >0$.  
The unit production cost of each firm decreases with its stock of R\&D, denoted \( X_{k}^{i} \), which evolves according to the following difference equation:  
\begin{subequations}\label{eq:NIstate}
	\begin{align}
		X_{k+1}^i = \mu^i X_{k}^i + R_k^i + \lambda^i R_k^j,\quad i \neq j,\quad i,j \in \{1,2\},\label{eq:NIstate1}
	\end{align}
	where \( \mu^i \in (0,1) \) and \( R_{k}^{i} \) denotes the investment in R\&D by firm \( i \) at time \( k \in \mathsf{K}_{l} \). The spillover parameter \( \lambda^{i} \in (0,1) \) represents the portion of firm \( i \)’s R\&D investment that spills over to its rival, implying that knowledge generated by a firm is not fully appropriable. Let \( I_{k}^{i} \) be the investment by firm  $i$ to increase its production capacity, \( Y_{k}^{i} \), which evolves as
	\begin{align}
		Y_{k+1}^i = \delta^i Y_{k}^i + I_k^i,\quad i \in \{1,2\},\label{eq:NIstate2}
	\end{align}
\end{subequations}
where $\delta^i\in(0,1)$, and $(1 - \delta^i)$   denotes the depreciation rate. Each firm's production must be non-negative and is upper-bounded by its production capacity:
\begin{align}
	Y_{k}^i \geq v_{k}^i \geq 0,\quad i \in \{1,2\}.\label{eq:NIconstraint}
\end{align}
Production decisions are made simultaneously. The production cost of firm \( i \) is given by \( h_{i}(X_{k}^{i}, v_{k}^{i}) = (c^{i} - \gamma^{i} X_{k}^{i}) v_{k}^{i} \), where \tb{\( c^{i} \)} is the fixed unit cost and \( \gamma^{i} \) is the cost-learning parameter that captures the speed of unit cost reduction. The R\&D and capacity investment costs are modeled as \( g_{i}(R_{k}^{i}) = (a^{i}/2)(R_{k}^{i})^{2} \) and \( f_{i}(I_{k}^{i}) = (b^{i}/2)(I_{k}^{i})^{2}\), respectively, where \( a^{i} \) and \( b^{i} \) are positive parameters. At the terminal time \( K \), the salvage value is given by  $
S_{i}(X_{K}^{i}, Y_{K}^{i}) = (\alpha_{X}^{i}/2)(X_{K}^{i})^{2} + (\alpha_{Y}^{i}/2)(Y_{K}^{i})^{2}$,
where $\alpha_{X}^{i} < 0$, $\alpha_{Y}^{i} < 0$, $i \in \{1,2\}$. Firm $i$'s objective is given by
\begin{align}
	J^i &= \sum_{k=0}^{K-1} \beta^k \left( f_i(R_k^i) + g_i(I_k^i) \right) + \sum_{k=0}^{K} \beta^k \left( h_i(X_k^i, v_k^i) - P(v_k^1, v_k^2) v_k^i \right) \notag \\
	&\quad + \beta^K S_i(X_K^i, Y_K^i),\label{eq:NIobjective}
\end{align}
where \( 0 < \beta < 1 \) is the common discount factor. Each firm \( i \in \{1,2\} \) minimizes the objective \eqref{eq:NIobjective} subject to the dynamics \eqref{eq:NIstate} and capacity constraint \eqref{eq:NIconstraint}. The dynamic duopoly game defined by \eqref{eq:NIstate}–\eqref{eq:NIobjective} fits the structure of the CLQDG model \eqref{eq:CLQDG}. 
For numerical illustration, we use the following parameter values: $K = 14,~ \bar{A}_{0} = 3.5,~ \bar{B}_{0} = 0.5,~ \epsilon = 0.015,~ \beta = 0.9,~ \lambda^{i} = 0.1,~ \mu^{i} = 0.8,~ \delta^{i} = 0.85,~ \gamma^{i} = 0.2,~ a^{i} = 1,~ b^{i} = 1, c^{i} = 0.5,~ \alpha_{X}^{i} = -0.2,~ \alpha_{Y}^{i} = -0.25,~ X_{0}^{i} = 5,~ Y_{0}^{i} = 4,\quad i \in \{1,2\}$. Following Remark~\ref{rem:LQVerification}, it can be readily verified that the assumptions of Theorem~\ref{th:LCP} are satisfied for the chosen parameters. We use the freely available PATH solver (available at \url{https://pages.cs.wisc.edu/~ferris/path.html}) to solve the LCP \eqref{eq:LQMmap}. For the given problem instance, computation takes approximately 0.4 seconds on a MacBook with an M1 Pro processor (10-core CPU) and 32 GB RAM.

       Figure \ref{fig:Fig3} shows the evolution of the stock of knowledge, production capacity, investments, and quantities produced by both firms under the FSN solution. Figure \ref{fig:fig11} depicts the evolution of production capacity and outputs of the foreign firm (leader) and the local firm (follower). For both firms, outputs are upper-bounded by their respective production capacities, with these constraints active from time period $4$ to $13$. We consider the situation where both firms invest simultaneously (e.g., when both are local firms), and compute the feedback-Nash equilibrium using the approach in \cite{Reddy:17}. Figures \ref{fig:fig12}–\ref{fig:fig32} compare the FSN and feedback-Nash equilibrium solutions under this scenario. Although both firms start with identical initial values for stock of knowledge and production capacity, Figures \ref{fig:fig12} and \ref{fig:fig21} show that, over time, these values are higher for the foreign firm (leader) than for the local firm (follower) and the feedback-Nash case. The quantity produced by the foreign firm is also consistently higher; see Figure \ref{fig:fig22}. A similar pattern appears in the investment levels (Figures \ref{fig:fig31}–\ref{fig:fig32}). In all figures from \ref{fig:fig12} to \ref{fig:fig32}, the feedback-Nash equilibrium trajectories lie between those of the leader and follower, highlighting their intermediate nature.   
       
\begin{figure}[h] 
	\centering 
	\subfloat[]{\includegraphics[scale=0.8]{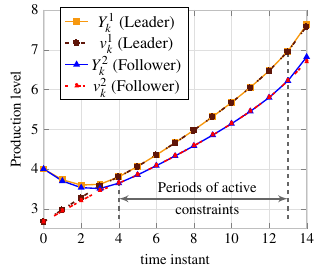} \label{fig:fig11}} 
	\subfloat[]{\includegraphics[scale=0.8]{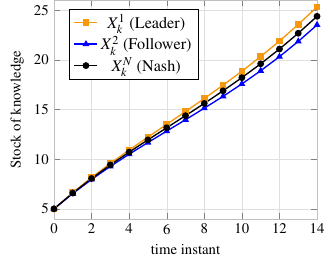} \label{fig:fig12}} \\
	\subfloat[]{\includegraphics[scale=0.8]{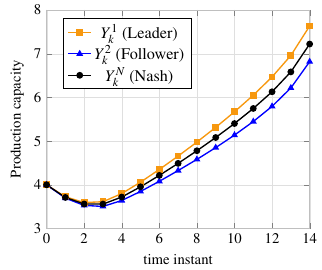} \label{fig:fig21}}  
	\subfloat[]{\includegraphics[scale=0.8]{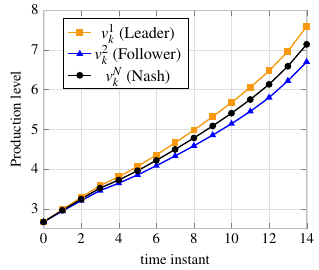} \label{fig:fig22}} \\
	\subfloat[]{\includegraphics[scale=0.8]{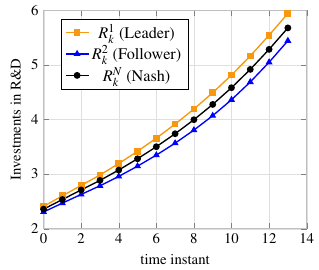} \label{fig:fig31}}  
	\subfloat[]{\includegraphics[scale=0.8]{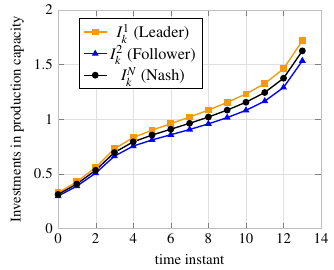} \label{fig:fig32}}
	
	\caption{Evolution of production capacities and production quantities (panel (a)), stock of knowledge (panel (b)), production capacities (panel (c)), quantity produced (panel (d)), investments in R$\&$D (panel (e)) and investments in production capacity (panel (f)).}
	\label{fig:Fig3}
\end{figure} 
\begin{table}[h]
\centering
	\caption {{Comparison of costs with $\lambda^1=\lambda^2$}} \label{tab:2}
	\small{
				\begin{tabular}{cccc} \hline
					 $\lambda^i$   &Leader ($J^1$)        &Follower ($J^2$)         &Feedback Nash ($J^N$)    \\\hline
   0.10          &-35.2466      &-31.4820         &-33.3295        \\ 
  0.15          &-36.5904     &-35.8300        &-36.2220        \\ 
 0.20          &-40.4875      &-39.8770         & -40.1874        \\
   0.25          &-46.3442      &-45.9252         &-46.0684        \\ 
 0.30          &-56.1883      &-56.2889         &-55.8265        \\ 
    0.35          &-76.2580      &-77.9906         &-75.0351        \\
 0.40         &-135.0391      &-143.6161         &-125.9453 \\ \hline
		\end{tabular}}
\end{table}

%
       	In Table \ref{tab:2} we compare players' costs under FSN and feedback-Nash strategies \cite{Reddy:17} for varying spillover parameter ($\lambda^1=\lambda^2$). The leader always benefits from the sequential structure, incurring lower costs than in the feedback-Nash case. For small $\lambda^i$ (top four rows), the follower's costs exceed those of both the leader and feedback-Nash. As $\lambda^i$ increases, the follower gains more from the leader’s R\&D (via \eqref{eq:NIstate1}), reducing her production cost  $h_{2}(X_{k}^{2}, v_{k}^{2})=(c^{2} -\gamma^{2}X_{k}^{2})v_{k}^{2}$. For larger $\lambda^i$ (bottom three rows), the follower’s total cost falls below both the leader’s and feedback-Nash.     	
       	%

\section{Conclusions}\label{sec:Conclusions}
          We studied a class of two-player nonzero-sum difference games with coupled inequality constraints, where players interact sequentially in one decision variable and simultaneously in another. For this quasi-hierarchical interaction, we defined the feedback Stackelberg-Nash (FSN) solution and provided a recursive formulation under a separability assumption on the cost functions. We showed that the FSN solution can be obtained from the parametric feedback Stackelberg solution of an associated unconstrained game with only sequential decisions, using parameter values that satisfy implicit complementarity conditions. In the linear-quadratic case with affine inequality constraints, the FSN solution reduces to a large-scale linear complementarity problem.
          
          \tb{
          	Future directions include extending the model to settings where both sequential and simultaneous decisions influence the state dynamics under mixed constraints, or where the cost includes cross terms between these decisions. In both cases, computing the FSN solution leads to a mathematical program with complementarity constraints (MPCC), which poses challenges due to the nonconvexity and nonsmoothness of the feasible set. Addressing these challenges will require efficient solution methods for MPCCs.}
	\bibliographystyle{ieeetr}
	\bibliography{main.bbl} 
	\begin{IEEEbiography}[{\includegraphics[width=0.9in,clip,keepaspectratio]{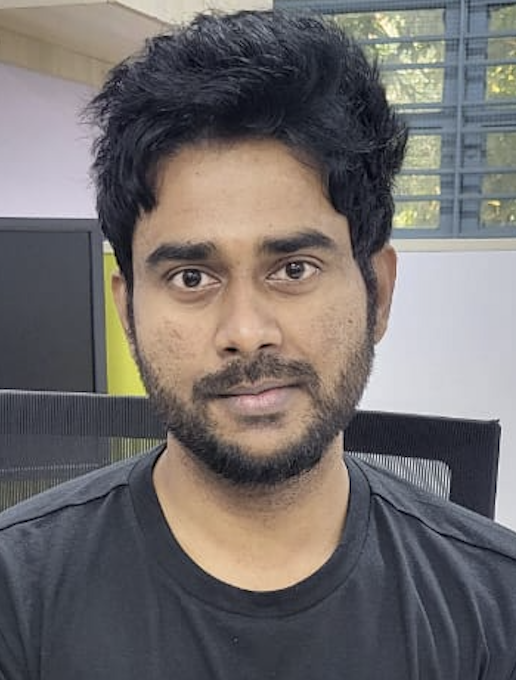}}]{Partha Sarathi Mohapatra} received the B.Tech. degree in Electrical Engineering from Indira Gandhi Institute of Technology, Sarang, India, in 2012. He is currently pursuing the Ph.D. degree with the Department of Electrical Engineering, IIT Madras, Chennai, India. His research interests include game theory, reinforcement learning and stochastic control. 
\end{IEEEbiography}
\begin{IEEEbiography}[{\includegraphics[width=0.9in,clip,keepaspectratio]{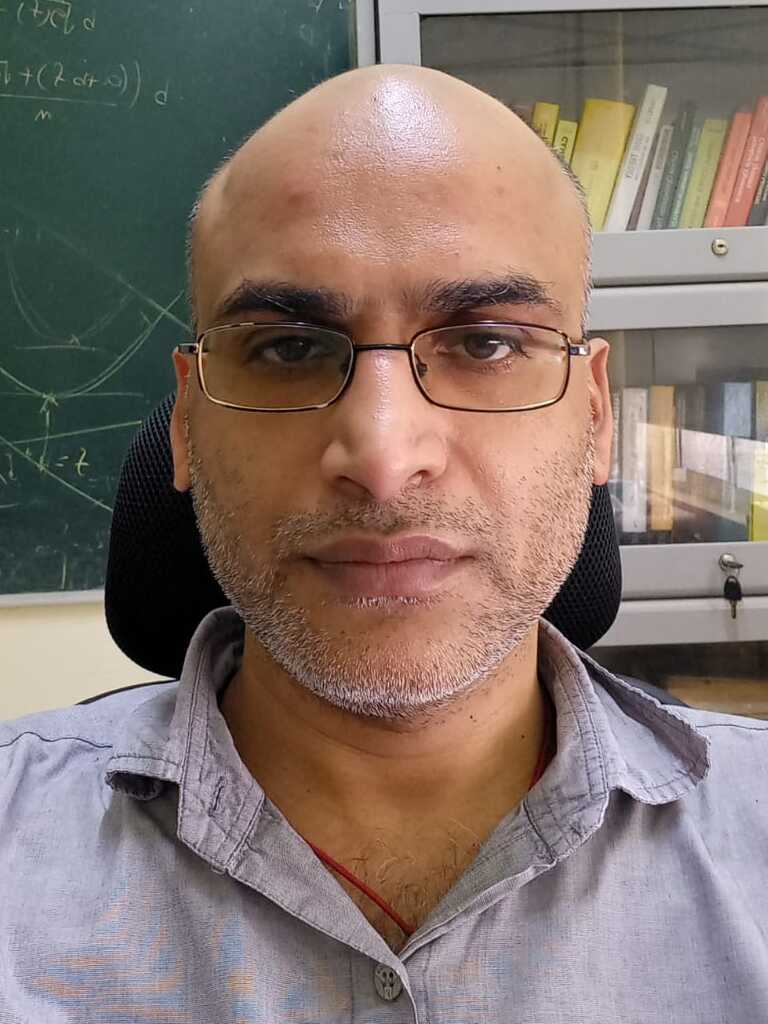}}]
	{Puduru Viswanadha Reddy} received the Ph.D. degree in operations research
	from Tilburg University, Tilburg, The Netherlands, in 2011. He is currently an Associate Professor with the Department of Electrical Engineering, IIT Madras, Chennai, India. His current research interests are 	in game theory and in the control of multi-agent systems.
\end{IEEEbiography} 
\begin{IEEEbiography}[{\includegraphics[width=0.9in,clip,keepaspectratio]{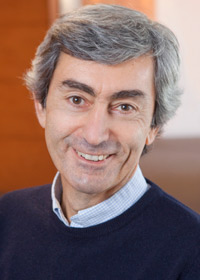}}] 
{Georges Zaccour} received the M.Sc. in international business and the Ph.D. in management science from HEC Montr\'{e}al, Montr\'{e}al, Canada, and a license in mathematics and economics from Universit\'{e} Paris-Dauphine, Paris, France. He holds the Chair in Game Theory and Management and is a Full Professor of Management Science at HEC Montréal. He served as Director of GERAD, an inter-university research center. His research interests include dynamic games, optimal control, and operations research, with applications to environmental management and supply chains. He has published three books, over 200 papers, and co-edited 13 volumes. Dr. Zaccour is a Fellow of the Royal Society of Canada and Editor-in-Chief of Dynamic Games and Applications. He served as President of the International Society of Dynamic Games from 2002 to 2006.
\end{IEEEbiography}
\end{document}